\documentclass[12pt,reqno]{amsart}
\usepackage{hyperref}

\usepackage{amsfonts, amssymb,amsmath,amsthm,enumerate}
\usepackage[margin=1.5in]{geometry}

\RequirePackage{mathrsfs} \let\mathcal\mathscr

\newtheorem{theorem}{Theorem}[section]
\newtheorem{lemma}[theorem]{Lemma}

\newtheorem{corollary}[theorem]{Corollary}

\theoremstyle{definition}
\newtheorem*{ack}{Acknowledgements}
\newtheorem{rem}[theorem]{Remark}

\newcommand{\PP}{\mathbb{P}}
\renewcommand{\AA}{\mathbb{A}}
\newcommand{\A}{\mathbf{A}}

\newcommand{\NN}{\mathbb{Z}_{>0}}
\newcommand{\QQ}{\mathbb{Q}}
\newcommand{\RR}{\mathbb{R}}

\makeatletter
\def\imod#1{\allowbreak\mkern10mu({\operator@font mod}\,\,#1)}

\makeatother

\newcommand{\xx}{\textbf{x}}

\newcommand{\yy}{\textbf{y}}
\newcommand{\ww}{\textbf{w}}
\newcommand{\zz}{\textbf{z}}

\newcommand{\rrr}{\textbf{r}}

\newcommand{\ff}{\textbf{f}}
\newcommand{\ggs}{\textbf{g}}
\newcommand{\uu}{\textbf{u}}

\newcommand{\beq}{\begin{equation}}
\newcommand{\eeq}{\end{equation}}
\newcommand{\beqq}{\begin{equation*}}
\newcommand{\eeqq}{\end{equation*}}
\newcommand{\bal}{\begin{align}}
\newcommand{\eali}{\end{align}}
\newcommand{\ball}{\begin{align*}}
\newcommand{\ealii}{\end{align*}}
\newcommand{\eps}{\varepsilon}

\numberwithin{equation}{section}
\newcommand{\ve}{\varepsilon}
\DeclareMathOperator{\Vol}{vol}
\renewcommand{\leq}{\leqslant}

\renewcommand{\geq}{\geqslant}

\renewcommand{\d}{\mathrm{d}}
\DeclareMathOperator{\Mod}{mod} 
\renewcommand{\bmod}[1]{\,(\Mod{#1})}
\newcommand{\N}{\mathrm{N}}

\begin{document}
\title[Counting rational points on del Pezzo surfaces]{Counting rational points on del Pezzo surfaces with a conic bundle structure}
\author{T.D. Browning}
\address{School of Mathematics\\
University of Bristol\\ Bristol\\ BS8 1TW}
\email{t.d.browning,mike.swarbrickjones@bristol.ac.uk}
\author{M. Swarbrick Jones}

\begin{abstract}
For any number field $k$, upper bounds are established for the number of $k$-rational points of bounded height on non-singular del Pezzo 
surfaces defined over $k$, which are equipped with suitable  conic bundle structures over $k$.
\end{abstract}

\subjclass[2010]{11G50 (11G35, 14G05)}

\maketitle
\setcounter{tocdepth}{1}
\tableofcontents

\thispagestyle{empty}

\section{Introduction}\label{intro}

Let $k$ be a number field.  A del Pezzo surface $X$ over $k$ is a non-singular projective surface defined over $k$, with ample anticanonical 
divisor $-K_X$. The degree of $X$ is defined to be $d= (-K_X)^2$. 
In this paper we will be concerned with upper bounds for the number of  $k$-rational points of bounded height on del Pezzo surfaces of small degree. The arithmetic of del Pezzo surfaces becomes harder to understand as $d$ decreases. 
For $d\in \{2,3,4\}$ they admit the following classical description:
\begin{itemize}
	\item[---] an intersection of two quadrics in $\mathbb{P}^4$ when $d=4$;
	\item[---] a cubic surface in $\mathbb{P}^3$ when $d=3$;
	\item[---] a double cover of $\mathbb{P}^2$ branched over a smooth quartic plane curve when $d=2$.
\end{itemize}
Given a del Pezzo surface $X$ of degree $d$, let $U\subset X$ be the
Zariski open set obtained by deleting from $X$ the finite set of exceptional curves of the first kind.
Let 
$$
N(U,k,B) = \# \{ x \in U(k): H_{-K_X}(x) \leq B  \} , 
$$ 
where $H_{-K_X}$ is the anticanonical height function on the set $X(k)$ of $k$-rational points on $X$.
Our motivation is a simple form of the Batyrev--Manin conjecture \cite{B-M}, which implies that we should have 
\beq\label{DPest} N(U,k,B)=O_{\eps, X}( B^{1+ \eps}), \eeq
for any $\eps>0$.
Throughout this paper, unless otherwise indicated, we shall follow the convention that any implied constant is allowed to depend at most 
upon the number field $k$, with any further dependence explicitly indicated. 
In \eqref{DPest}, for example, the implied constant is allowed to depend on $X$ and  the choice of $\eps$, in addition to  $k$.

Recall that 
a conic bundle surface over $k$ is defined to be a non-singular  projective surface $S$
defined over $k$, which is equipped with a dominant $k$-morphism
$S\rightarrow \mathbb{P}^1$, all of whose fibres are conics.
We shall focus our attention on del Pezzo surfaces of degree $d$ which are also conic bundle surfaces. 
When no such restriction is made on the del Pezzo surface, the best general bound we have is due to 
Salberger \cite{bonn}. Working in the special case $k=\mathbb{Q}$, he has established the estimate
\begin{equation}\label{eq:salberger}
N(U,\mathbb{Q},B) =O_{\eps,X}(B^{3 / \sqrt{d}+\eps}),
\end{equation}
for any $\eps>0$.

Let us first consider the case of degree $4$ del Pezzo surfaces $X\subset \mathbb{P}^4$ defined over $k$. 
In work presented at the conference ``Higher dimensional varieties and rational points'' at Budapest in 2001, Salberger noted that one can get better bounds for 
$N(U,k,B)$ when $X$
contains a non-singular  conic over $k$, in which case 
it  has a conic bundle structure over $k$.  For such surfaces 
he established \eqref{DPest} when  $k=\mathbb{Q}$. The following result generalises this to arbitrary number fields. 

\begin{theorem}\label{dp4thm}  Let $\eps>0$ and let $X\subset \PP^4$ be a del Pezzo surface of degree $4$  over $k$,  containing a non-singular  conic defined over $k$.  Then 
$$
N(U,k,B)=O_{\eps,X}( B^{1 + \eps}).$$ 
The implied constant is ineffective.
\end{theorem}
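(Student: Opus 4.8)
The plan is to exploit the given conic bundle together with the complementary one it induces. Let $C\subset X$ be the given non-singular conic defined over $k$. Over $\bar k$ it is a smooth rational curve with $C^2=0$ and $-K_X\cdot C=2$, so $\chi(\mathcal O_X(C))=2$ and $|C|$ is a pencil, defining a conic bundle $\pi\colon X\to\PP^1$ over $k$ with $\pi^*\mathcal O(1)=[C]$. The class $G=-K_X-[C]$ likewise satisfies $G^2=0$, $-K_X\cdot G=2$, is $G_k$-invariant, and one checks directly that $G$ is nef with $|G|$ base-point free, so it yields a second conic bundle $\pi'\colon X\to\PP^1$ over $k$ with $\pi'^*\mathcal O(1)=G$. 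Since $-K_X=\pi^*\mathcal O(1)+\pi'^*\mathcal O(1)$, additivity of heights gives $H_{-K_X}(x)\asymp H(\pi(x))\,H(\pi'(x))$ for all $x\in X(k)$, with implied constants depending only on $X$; moreover $[C]\cdot G=2$, so the morphism $f=(\pi,\pi')\colon X\to\PP^1\times\PP^1$ is finite, surjective, and of degree $2$. Its ramification curve $R\subset X$ maps isomorphically to the $(2,2)$ branch curve of $f$, which has arithmetic genus $1$.

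\textbf{Reduction to a single fibration.} First I would discard the points on the at most four singular fibres of $\pi$ (and of $\pi'$): these are unions of exceptional lines of $X$, hence disjoint from $U$, so for $x\in U(k)$ the fibres $C_{\pi(x)}=\pi^{-1}(\pi(x))$ are smooth conics. The points of $U(k)$ lying on $R$ form a subset of a curve of geometric genus $\ge 1$, contributing $O_{\eps,X}(B^\eps)$, so these too can be set aside; it is in handling such a curve that an ineffective finiteness input enters. For the remaining $x$, from $H(\pi(x))H(\pi'(x))\asymp H_{-K_X}(x)\le B$ and, after interchanging the symmetric roles of $\pi$ and $\pi'$ if necessary, we may assume $H(\pi(x))\le H(\pi'(x))$, whence $H(\pi(x))\ll B^{1/2}$. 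Thus, writing $t=\pi(x)$,
\[
N(U,k,B)\ \ll\ \sum_{\substack{t\in\PP^1(k)\\ H(t)\ll B^{1/2}}}\ \#\bigl\{x\in C_t(k):\ H_{-K_X}(x)\le B\bigr\}\ +\ O_{\eps,X}(B^\eps).
\]

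\textbf{The fibrewise estimate.} It remains to bound, for fixed $t$ with $H(t)\ll B^{1/2}$, the number of $k$-points of bounded height on the smooth conic $C_t$. The key geometric point is that $C_t$ admits an explicit plane model $\mathcal C_t\subset\PP^2$ cut out by the specialisation at $t$ of a fixed ternary quadratic form whose entries are binary forms of degree $2$ in $t$; consequently the discriminant of $\mathcal C_t$ (equivalently, the discriminant of the degree-$2$ map $\pi'|_{C_t}\colon C_t\to\PP^1$) has norm of exact order $H(t)^4$ for all but finitely many $t$. Feeding this into a uniform upper bound for the number of $k$-rational points of height $\le Y$ on a conic of discriminant $\mathfrak d$ --- of the shape $O_\eps\bigl(Y^\eps(1+Y^{[k:\QQ]}/\mathrm N(\mathfrak d)^{1/2})\bigr)$, which is the central analytic ingredient --- and using that the relevant height parameter $Y$ is $\ll B$, one obtains a per-fibre bound which, for the $t$ that matter, is of size $B^\eps\cdot B/H(t)^2$. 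Summing over $t$ with $H(t)\ll B^{1/2}$ then yields $\sum_{h\ll B^{1/2}}h\cdot B/h^2\ll B\log B\ll B^{1+\eps}$, as required.

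\textbf{The main obstacle.} The genuine difficulty is the last step. One must establish the uniform conic estimate in the precise strength needed over an \emph{arbitrary} number field, and --- more delicately --- organise the summation so that the ``$+1$'' contributions (one for every $t$ with $C_t(k)\neq\emptyset$, of which there are far too many to afford if counted crudely) are genuinely controlled. This forces one to quantify how rarely the fibre conic $C_t$ acquires a $k$-rational point of small height relative to its discriminant, which in turn rests on reduction theory of the associated ternary quadratic forms and on lower bounds for class numbers of quadratic orders; the appeal to Siegel's theorem there is responsible for the ineffective implied constant in the statement. Everything else --- the geometry of the two conic bundles, the height comparisons, and the elementary summation --- is routine once these inputs are in place.
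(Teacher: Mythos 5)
Your geometric setup is the same as the paper's: both proofs use the two complementary conic fibrations associated to $[C]$ and $-K_X-[C]$, the additivity of heights under the product map to $\PP^1\times\PP^1$, and a reduction to a count on the fibres of the fibration whose base height is $\ll B^{1/2}$. The proposal goes astray, however, at exactly the two places where the real work lies.

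\textbf{The exponent in the uniform conic estimate.} You posit a bound of the shape $O_\eps\bigl(Y^\eps(1+Y^{[k:\QQ]}/\N(\mathfrak d)^{1/2})\bigr)$ for the number of $k$-points on a conic of discriminant $\mathfrak d$. The correct exponent on the discriminant is $1/3$, not $1/2$ (this is Heath--Brown's bound, generalised in Theorem~\ref{qmain}: with $\Delta_0$ bounded, the count is $\ll_\eps \bigl(1+(R/\N_k(\Delta))^{1/3}\bigr)\N_k(\Delta)^\eps$, where $R$ is the product of the three coordinate box sizes). If one feeds in the correct exponent but takes $R\asymp B^3$ (all three coordinates bounded by $B$, which is what your sketch does), the per-fibre bound is $B^\eps\bigl(1+B/H(t)^{4/3}\bigr)$; summing this over $t\in\PP^1(k)$ with $H(t)\ll B^{1/2}$ yields only $B^{4/3+\eps}$, not $B^{1+\eps}$. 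Your claimed per-fibre bound $B^\eps\cdot B/H(t)^2$ is in fact correct, but the reason is not a $1/2$ exponent: it is that the torsor structure forces \emph{two of the three plane coordinates of $C_t$ to be bounded by $B/H(t)$ rather than $B$}. Concretely, in the model $x_0x_1=x_2x_3$ a point of $f_1^{-1}([u,v])$ is $(ux,yv,xv,yu,z)$ and the ambient height bound $\leq B$ forces $\sup(\|x\|,\|y\|)\ll B/H([u,v])$ with only $\|z\|\ll B$. Thus $R\asymp B^3/H(t)^2$, and $\bigl(R/H(t)^4\bigr)^{1/3}=B/H(t)^2$. This coordinate scaling is the crucial observation that your proposal omits.

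\textbf{The averaging and the source of ineffectivity.} Your claim that the discriminant of $\mathcal C_t$ has norm of exact order $H(t)^4$ ``for all but finitely many $t$'' is false: $\Delta(u,v)$ is a degree-four binary form, and there are infinitely many $(u,v)$ for which $u/v$ is a close rational approximation to one of its roots, making $\N_k(\Delta(u,v))$ anomalously small. Controlling the contribution from these fibres is precisely what Theorem~\ref{sumestimate} does; its proof rests on Lang's generalisation of the Thue--Siegel--Roth theorem (controlling how often $u/v$ can approximate a fixed algebraic number), and that is the sole source of ineffectivity. It has nothing to do with Siegel's class number theorem or lower bounds for class numbers of quadratic orders, which you invoke. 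Conversely, the ``$+1$'' contribution you flag as the ``genuine difficulty'' is actually the easy part: it is at most the number of $t\in\PP^1(k)$ with $H(t)\ll B^{1/2}$, which is $\ll B$ by Schanuel's theorem, and no class number input is needed for it. In summary, the skeleton of your argument is the paper's, but the two decisive ingredients (the torsor-induced shrinking of the coordinate boxes, and the Diophantine-approximation control of small discriminants) are missing or misattributed.
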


All of the implied constants in our results about del Pezzo surfaces are ineffective. This arises from an application
  of the Thue--Siegel--Roth theorem over number fields 
  \cite[\textsection 7, Thm.~1.1]{Lang} (see Remark \ref{remark} for an indication of how effectivity can be recovered). 

In the case $k=\mathbb{Q}$, 
de la Bret\`{e}che and Browning \cite{BreBro} have obtained an asymptotic formula for 
$N(U,\mathbb{Q},B)$, as $B\rightarrow\infty$,  for a particular del Pezzo surface of degree $4$ with a conic bundle structure over $\QQ$. In  general the best bound available is given by \eqref{eq:salberger}, although one can do better if one is willing to 
assume a standard rank hypothesis for elliptic curves over $\mathbb{Q}$ (see \cite[Section~7.3]{Timbook}).

According to Iskovskih's  $k$-birational classification 
 \cite{Isko}, there are two possible classes of degree $4$ conic bundle surface defined over $k$. When the anticanonical divisor is ample  one has a del Pezzo surface of degree $4$, as considered in  Theorem \ref{dp4thm}. When the anticanonical divisor is not ample, on the other hand, one obtains a  Ch\^{a}telet surface  to which one can associate 
an analogous counting function $N(U,k,B)$. In this setting one still expects \eqref{DPest} to hold and 
Browning \cite{TimChatelet} has established this when $k=\mathbb{Q}$. 
Although we choose not to do so here, it is possible to use the results in this paper to 
extend 
this work to Ch\^atelet surfaces defined over arbitrary number fields.

We now turn to del Pezzo surfaces of degree $3$ over $k$. These arise as cubic surfaces $X$ in $\PP^3$.
We do not know of a single non-singular cubic surface for which \eqref{DPest} has been proved. 
Cubic surfaces admit a conic bundle structure over $k$ when one of the $27$ lines contained in the surface is defined over $k$.
The best bounds that we have for $N(U,k,B)$ arise when stronger 
hypotheses are placed on the configuration of lines in the surface. The following result is due to 
Broberg \cite{Broberg} and is 
a generalisation of the  case $k=\QQ$  handled by  Heath-Brown \cite{HB}.

\begin{theorem}\label{dp3thm}  
Let $\eps>0$ and let $X\subset \PP^3$ be a del Pezzo surface of degree $3$ over $k$, containing three coplanar lines defined over $k$.  Then 
$$
N(U,k,B)=O_{\eps,X}( B^{4/3 + \eps}).
$$ 
The implied constant is ineffective. 
\end{theorem}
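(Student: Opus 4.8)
The plan is to realise $X$ as a conic bundle surface over $k$ and to count $k$-points fibre by fibre, paralleling the treatment of the degree-$4$ case in Theorem~\ref{dp4thm}. After a $k$-linear change of coordinates we may assume the common plane of the three lines is $\{x_0=0\}$, so that $X$ is defined by an equation $x_0Q(\mathbf{x})=L_1L_2L_3$ with $Q$ a quadratic form and $L_1,L_2,L_3$ linear forms over $k$ whose zero loci meet $\{x_0=0\}$ in the given lines $\ell_1,\ell_2,\ell_3$; in this model the anticanonical height on $X(k)$ is comparable to the standard height on $\PP^3(k)$. The pencil of planes through $\ell_1$ cuts $X$ in $\ell_1$ together with a residual conic $C_t$, $t\in\PP^1$, so projection away from $\ell_1$ exhibits $X$ as a conic bundle $\pi:X\to\PP^1_k$ whose degenerate fibres are pairs of lines among the $27$ (one of them being $\ell_2\cup\ell_3$).

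Every $P\in U(k)$ avoids all lines, hence lies on a unique smooth fibre, with $t=\pi(P)\in\PP^1(k)$ and $H(t)\ll H(P)$; and since $P$ is a $k$-point of $C_t$ we have $C_t\cong\PP^1_k$. Therefore
\[
N(U,k,B)\le\sum_{\substack{t\in\PP^1(k)\\ H(t)\ll B}}\#\big\{P\in C_t(k)\cap U:H(P)\le B\big\}.
\]
For each such $t$ one parametrises $C_t$ by $\PP^1_k$: writing $t$ in lowest terms and clearing denominators, $C_t$ appears in $\PP^3$ as the image of a morphism given by binary quadratic forms with coefficients of size a bounded power of $H(t)$, whose smallest $k$-point has height $\ll H(t)$ by Holzer's theorem over $k$. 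A sharp count of $k$-points of bounded height on $C_t$, sensitive to the height of the conic, then bounds the inner sum --- by roughly $B^{\eps}(1+B/H(t))$ for a generic $t$, and by considerably less when the defining form of $C_t$ has large content.

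By itself this only gives $N(U,k,B)\ll B^{2+\eps}$, because too many fibres --- of order $B^2$ when $k=\QQ$ --- could carry a point of height $\le B$. The decisive extra input is that $X$ carries \emph{three} conic bundle structures, from $\ell_1,\ell_2,\ell_3$, so each $P$ lies on a fibre of each of the three bundles at once; writing $x_0=b_im_i$, $x_i=a_im_i$ with $a_i,b_i$ coprime (in the appropriate sense) for $i=1,2,3$, the integrality relation $x_0\mid L_1L_2L_3$ forced by the equation of $X$ couples these three parametrisations tightly and restricts the ranges of the remaining coordinates. Combining this coupling with a dyadic decomposition over $H(t)$ and over the parameters $m_i$, and feeding in the sharp conic count, produces the power saving from $B^2$ down to $B^{4/3}$. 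The degenerate fibres contribute nothing, being unions of the $27$ lines; but one meets a sparse set of exceptional $t$ where the fibre, or its parametrisation, degenerates and one is reduced to counting $k$-solutions of a Thue equation with bounded right-hand side (equivalently, $S$-integral points on a $\PP^1_k$ with three rational points deleted). These are controlled uniformly in $k$ by the Thue--Siegel--Roth theorem over $k$, which is the source of the ineffectivity of the implied constant.

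The main obstacle is the power saving just described. A single conic bundle is genuinely insufficient; one has to use all three coplanar lines simultaneously, keep careful track of the common divisors $m_i$, and invoke a best-possible bound for $k$-points of bounded height on conics, uniformly over the number field. (An alternative is to blow down $\ell_1$, obtaining a del Pezzo surface of degree $4$ carrying a conic defined over $k$, to which Theorem~\ref{dp4thm} gives $O_\eps(B^{1+\eps})$; transferring this back to $X$, however, requires controlling the distortion of heights introduced by the blow-down, that is, dealing separately with $k$-points of $X$ lying close to $\ell_1$, which reintroduces a difficulty of the same order. Either way the argument ultimately rests on Thue--Siegel--Roth over $k$.)
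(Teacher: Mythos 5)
Your proposal follows the same broad strategy as the paper---model $x_0Q=L_1L_2L_3$, three conic fibrations attached to $\ell_1,\ell_2,\ell_3$, a fibrewise point count---but the decisive mechanism for the power saving from $B^{2}$ to $B^{4/3}$ is only gestured at. The paper makes this a clean pigeonhole on heights: via the Segre embedding of $f=(f_1,f_2,f_3):X\to(\PP^1)^3$, one has $\prod_{i=1}^3 H_k(f_i(x))\ll H_k(x)^{2}$ on $U$ (Lemma~\ref{lem:height} with $e=2$, $m=3$, since $\psi\circ f=[x_0^2,x_0L_3,x_0L_2,x_0L_1,L_2L_3,L_1L_3,L_1L_2,Q]$), so for every $x$ some $i$ has $H_k(f_i(x))\ll B^{2/3}$. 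One then bounds each $n_i(B)$ by a dyadic sum over $H_k([u,v])\ll B^{2/3}$ and applies Theorem~\ref{conicbundleest} to the fibre equation $vy_2L_3(vy_1,y_2,y_3)=uQ(uy_1,vy_1,y_2,y_3)$, a conic bundle torsor with $\deg\Delta=5$; this yields $n_i(\A,B)\ll A^{2+\eps}+BA^{\eps}$ and hence $n_i(B)\ll B^{4/3+\eps}$. Your ``divisibility coupling'' $x_0\mid L_1L_2L_3$ with the common factors $m_i$ is indeed what lies behind the inequality $\prod_i H_k(f_i(x))\ll H_k(x)^2$, but as written you never extract from it the key restriction $H(t)\ll B^{2/3}$, and the passage ``produces the power saving from $B^2$ down to $B^{4/3}$'' is asserted, not derived --- so there is a genuine gap precisely at the step on which the whole theorem turns.

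Two smaller points of contrast. First, the paper does not parametrise $C_t$ via a Holzer-type smallest solution; it uses a lattice-covering count for conics that is uniform in the coefficients (Theorem~\ref{qmain}), fed into Theorem~\ref{conicbundleest} together with the binary-form average of Theorem~\ref{sumestimate}. Second, the ineffectivity does not arise from a Thue equation with bounded right-hand side on the sparse bad fibres (those are handled by restricting to $\mathcal{T}_0$), but from Lang's number-field Thue--Siegel--Roth theorem used inside Theorem~\ref{sumestimate} to control $\sum\N_k(\Delta(u,v))^{-1/3}$. Your intuition about where the ineffectivity enters is in the right neighbourhood, but the object to which Roth is applied is different.
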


Note that the implied constant is actually effective in 
\cite{HB} and \cite{Broberg}.
We will provide our own  proof of Theorem~\ref{dp3thm},  since
our argument is simpler than that appearing in \cite{Broberg}, albeit
at the expense of effectivity in the implied constant. 
In the case $k=\QQ$, the best general bound is given by 
\eqref{eq:salberger} with $d=3$. 
There is also  further work of Heath-Brown \cite{HB2} when $k=\QQ$, 
which shows that the estimate in Theorem \ref{dp3thm} holds for all 
cubic surfaces conditionally on the rank hypothesis mentioned previously.

Much less is known about the arithmetic of del Pezzo surfaces of degree $2$ over $k$. These may be embedded in weighted projective space $\PP(2,1,1,1)$ via an equation of the form
\begin{equation}\label{eq:dp2}
t^2=f(x_1,x_2,x_3),
\end{equation}
where $f\in k[x_1,x_2,x_3]$ is a non-singular  form of degree $4$. 
Taking $U\subset X$ to be the complement of the 56 exceptional curves, 
it was shown by Broberg \cite[Thm.~2]{Broberg2} that $N(U,\QQ,B)=O_{\ve,X}(B^{9/4+\ve})$ for any $\ve>0$. This is improved upon by \eqref{eq:salberger},  but both bounds are rather far from the expectation in \eqref{DPest}.
In the spirit of the previous results it is possible to exploit conic bundle structures.

\begin{theorem}\label{dp2thm}  
Let $\eps>0$ and let $X\subset \PP(2,1,1,1)$ be a del Pezzo surface of degree $2$, 
containing a non-singular conic defined over $k$.
Then 
$$
N(U,k,B)=O_{\eps,X}( B^{2 + \eps}).
$$ 
The implied constant is ineffective.
\end{theorem}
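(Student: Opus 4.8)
The strategy is to exploit the conic bundle structure that $X$ carries by virtue of its conic, running the same argument as for Theorems~\ref{dp4thm} and~\ref{dp3thm}. If $C\subset X$ is a non-singular conic defined over $k$, then adjunction gives $C^2=0$ and $(-K_X)\cdot C=2$, so $\dim|C|=1$, the pencil $|C|$ is base-point free, and $\varphi=\varphi_{|C|}\colon X\to\PP^1$ is a conic bundle over $k$ whose generic fibre is a non-singular conic. Since $K_X^2=2$, such a bundle has exactly six singular geometric fibres, and each of these is a pair of distinct (possibly conjugate) lines, both of which are $(-1)$-curves and hence lie among the $56$ exceptional curves of $X$. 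In particular $U(k)$ is disjoint from every singular fibre, so it suffices to estimate the contribution of the non-singular fibres. After a suitable choice of coordinates one may take the generic fibre to be a conic $Q_{\uu}(y_0,y_1,y_2)=0$ whose coefficients are binary forms in $\uu=(u_0:u_1)$ of bounded degree, with discriminant $\Delta(\uu)$ a binary form that vanishes precisely at the six singular fibres.

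The first step is to descend from $X$ to the base. If $x\in U(k)$ satisfies $H_{-K_X}(x)\le B$, then $\uu=\varphi(x)\in\PP^1(k)$ satisfies $H(\uu)\ll B^{\theta}$ for some $\theta=\theta(X)>0$ (obtained by comparing the fibre class with the ample class $-K_X$), and the fibre $X_{\uu}$ then contains a $k$-point of anticanonical height at most $B$; since a non-singular conic over $k$ with a $k$-point already has one of height $\ll\N(\Delta(\uu))^{1/2}$, the admissible $\uu$ are further constrained in terms of $\Delta(\uu)$. For each such $\uu$ the fibre is isomorphic to $\PP^1_k$, and I would count the $k$-points on it of anticanonical height at most $B$ by a uniform estimate for $k$-points of bounded height on $\PP^1_k$ with respect to the skewed height obtained by restricting $H_{-K_X}$; such an estimate has the shape $\ll_{\eps}B^{\eps}\bigl(1+B^{a}/\N(\Delta(\uu))^{b}\bigr)$ for suitable $a,b>0$, the factor $B^{\eps}$ absorbing the divisor-type contribution of common factors in the parametrising forms. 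Non-singular fibres without a $k$-point contribute nothing.

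It then remains to sum this bound over the relevant $\uu$. Splitting the range of $H(\uu)$ into dyadic intervals and using Schanuel-type counts for $\PP^1(k)$, on each interval one balances the main contribution $\sum_{\uu}B^{a}/\N(\Delta(\uu))^{b}$ against the trivial contribution of $O(B^{\eps})$ per fibre times the number of fibres. When the bookkeeping is done both contributions turn out to be $O_{\eps}(B^{2+\eps})$. The loss of an entire power of $B$ relative to the conjectured exponent in \eqref{DPest} is intrinsic to this method in degree $2$: the small anticanonical degree makes both the admissible range of $\uu$ and the individual fibre counts larger than in the degree~$3$ and degree~$4$ cases, and it is precisely this that caps the exponent at $2$.

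The main obstacle is the contribution of the fibres for which $\Delta(\uu)$ is small, equivalently for which $\uu$ lies $v$-adically close, for every place $v$ of $k$, to one of the finitely many $\bar k$-zeros of $\Delta$. For such $\uu$ the conic $X_{\uu}$ can carry a $k$-point of very small height, and hence an abnormally large number of $k$-points of height at most $B$, so the uniform conic estimate on its own is too weak; one must instead control how many $\uu$ of bounded height can accumulate near a fixed algebraic point of $\PP^1$. This is exactly where the Thue--Siegel--Roth theorem over number fields enters: by \cite[\S 7, Thm.~1.1]{Lang} the number of such $\uu$ is $O_{\eps}(B^{\eps})$ once the approximation exponent is taken slightly above $2$, which is enough to absorb the near-singular fibres into the final bound $O_{\eps}(B^{2+\eps})$. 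It is this appeal to Roth's theorem that renders the implied constant ineffective.
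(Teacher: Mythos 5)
Your high-level picture is correct: the result does exploit the conic bundle structure that $|C|$ provides, the argument does reduce to a uniform estimate for points of bounded size on the fibres, Roth's theorem over number fields (via Lang) is what controls the near-singular fibres and is what makes the constant ineffective, and the surface does have exactly six degenerate geometric fibres. But there is a concrete gap in the descent to the base, and without fixing it the bound does not close.

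You write that $\uu=\varphi(x)$ satisfies $H(\uu)\ll B^{\theta}$ ``for some $\theta=\theta(X)>0$'' and then treat $\theta$ as if it were harmless. It is not. A single conic fibration $\varphi_{|C|}\colon X\to\PP^1$ is given in weighted coordinates by a pair of weight-$2$ forms (e.g.\ $[t-q_3,\,q_1]$ once one writes the quartic as $t^2=q_1q_2+q_3^2$), so the functoriality of heights gives only $H(\varphi(x))\ll H_k(x)^2$, i.e.\ $\theta=2$. With $\theta=2$ the ``main term'' $\sum_{A\ll B^\theta}A^{2+\eps}$ is already $\gg B^{4}$, so the dyadic summation cannot be capped at $B^{2+\eps}$. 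The paper's way around this is the pairing trick of Section~\ref{heights}: the decomposition $(t-q_3)(t+q_3)=q_1q_2$ yields \emph{two} conic bundle morphisms $f_1=[t-q_3,q_1]$ and $f_2=[t-q_3,q_2]$ over $k$, with $\psi\circ(f_1,f_2)=[t-q_3,q_2,q_1,t+q_3]$ of degree $e=2$ and $m=2$ factors, so that for every $x$ at least one of $H(f_i(x))\ll B^{e/m}=B$. That is how $\theta=1$ is obtained, and it is precisely what makes $n_1(\A,B)\ll A^{2+\eps}+BA^\eps$ summable over dyadic $A\ll B$ to $B^{2+\eps}$. Your proposal never makes this reduction, and with the single fibration you actually describe the argument fails. (Your per-fibre count also differs from the paper's: you propose finding a small $k$-point of height $\ll\N_k(\Delta(\uu))^{1/2}$ and then parametrising, whereas the paper uses the lattice-point count of Theorem~\ref{qmain} directly; that divergence is more a matter of packaging and could be made to work, but the missing second fibration is fatal as written.)
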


One can do better  when  $k=\QQ$ and 
one assumes that all of the 56 exceptional curves in $X$  
are defined over $\QQ$. In this case, as announced by 
Salberger
at the conference ``G\'eom\'etrie arithm\'etique et vari\'et\'es rationnelles'' at Luminy in 2007, 
one has the  sharper bound 
$
N(U,\QQ,B)=O_{\ve,X}(B^{11/6+\ve}).
$

We  proceed
 to indicate the contents of this paper. 
We shall use the underlying conic bundle structures to prove Theorems~\ref{dp4thm}--\ref{dp2thm}, closely following the notation and framework developed by Broberg \cite{Broberg}. 
In Section \ref{conicsection} we 
recall some basic facts from algebraic number theory and 
present our main technical result,
Theorem \ref{conicbundleest}, 
  from which our results on del Pezzo surfaces are deduced in Section 
  \ref{rationalexamples}. This is concerned with counting $k$-rational points of bounded height on certain ``conic bundle torsors'' and its  
  proof  hinges upon two further results: Theorem \ref{qmain} and 
Theorem 
\ref{sumestimate}. The first of these involves
counting $k$-rational points of bounded height on non-singular conics defined over $k$, 
which needs to be done uniformly with respect to the coefficients of the underlying equation.
This is likely to be of independent interest and is proved in 
 Section \ref{ternary}. The second is concerned with 
 a certain average involving binary forms over $k$ 
and  is proved in Section~\ref{binarysum}.

\begin{ack}
The authors are grateful to  Per Salberger for comments 
on an earlier draft of this paper. 
While working on this paper the first author 
was  supported by ERC grant \texttt{306457} and the second author 
was supported by an EPSRC doctoral training grant. 
\end{ack}

\section{Counting points on conic bundle torsors}\label{conicsection}

\subsection{Algebraic number theory}
We begin by recalling some basic notation and facts 
concerning our number field $k$.  Let $d=[k:\mathbb{Q}]$ and let $\mathfrak{o}$ be the ring of integers of $k$.  
We denote by  $\Omega$ the set of places of $k$.
 We let $s_k$ denote the number of infinite places of $k$.  For any  $\nu\in \Omega$, we let $\mu$ be its restriction to $\mathbb{Q}$ and put $d_\nu = [k_\nu : \mathbb{Q}_\mu]$.  The absolute value $|\cdot|_\nu$ on $k$ is the one which induces the normal absolute value on $\mathbb{R}$ if $\nu\mid \infty$ and the $p$-adic absolute value if $\nu\mid p$.  
 The normalised
 absolute value is 
 $$
 \| \cdot \|_\nu = |\cdot |^{d_\nu}_\nu.
 $$ 
 We denote by  $\N_k(\mathfrak{a})=[\mathfrak{o}:\mathfrak{a}]$ 
 the ideal norm for any fractional ideal $\mathfrak{a}$ of $\mathfrak{o}$. We also have 
 $\N_k(\langle \alpha \rangle)=|N_{k/\QQ}(\alpha)|$, where $\langle \alpha \rangle$ is the principal ideal generated by any $\alpha\in k^*.$  In this case we write $\N_k(\alpha)$ for short. 
Recall that for any $x \in k^*$ we have 
\beq\label{productrule}  \prod_{\nu\mid\infty }\| x \|_\nu = \N_k(x) \quad \text{ and } \quad \prod_{\nu\in \Omega}\| x \|_\nu = 1,
\eeq 
the second equation being the {\em product formula}.

There is a well-defined height function 
$H_k : \mathbb{P}^n(k) \mapsto \mathbb{R}_{\geq 1}$, given by
$$
[x_0 , \dots , x_n] \mapsto \prod_{\nu\in \Omega} \sup_{0\leq i\leq n}  \| x_i \|_\nu  
= \frac{1}{\N_k(\langle x_0 , \dots , x_n \rangle)} \prod_{\nu\mid\infty} \sup_{0\leq i\leq n} \| x_i \|_\nu,
$$
where   $\langle x_0 , \dots , x_n \rangle $ denotes the $\mathfrak{o}$-span of $x_0 , \dots , x_n\in k^*$. 
We define a further distance function 
$\|\cdot \|_\star:k\rightarrow \RR_{\geq 0}$ via
$$
\|x \|_{\star} = \sup_{\nu\mid\infty} \| x \|_\nu.
$$  
For any $x \in k$ 
it is clear that 
\beq\label{Normupper}   \N_k(x) \leq \| x \|^{s_k}_\star.
\eeq
If $\textbf{x} = (x_1, \dots , x_n) \in k^n$, then
this distance function extends via
$$
\|\textbf{x} \|_{\star} =  \sup \limits_{1\leq i\leq n} \| x_i \|_{\star}=
 \sup_{\substack{
 1\leq i\leq n\\
 \nu\mid\infty}} \| x_i \|_\nu.
$$

Over $\mathbb{Q}$, any point $
x
 \in \mathbb{P}^{n}(\mathbb{Q})$ has a representative $\xx=(x_0 , \dots , x_n) \in \mathbb{Z}^{n+1}$ such that $\gcd (x_0 , \dots , x_n) = 1$, which easily allows one to  take  precisely one element from each equivalence class.  Over $k$, an analogue arises by first fixing once and for all a set of integral ideals $ \mathfrak{a}_1, \dots , \mathfrak{a}_h$
 representing classes in the ideal class  group.  Then any $x \in \mathbb{P}^{n}(k)$ has a representative in  coordinates $\xx\in \mathfrak{o}^{n+1}$ such that the $\mathfrak{o}$-span 
 $\langle x_0 , \dots , x_n \rangle$ is one of the ideals $\mathfrak{a}_i$.    A useful consequence of Dirichlet's unit theorem is the following standard result (see 
 \cite[Prop.~3]{Broberg} for a proof).

\begin{lemma}\label{important1}   Every point $x\in \mathbb{P}^n(k)$ has a representative $\xx\in \mathfrak{o}^{n+1}$ such that $\langle x_0, \dots , x_n \rangle = \mathfrak{a}_i$ for some $i\in \{1,\dots,h\}$, and 
\beq\label{Hsigma}
\|\xx\|_{\star}
  \ll_{n} H_k(x)^{1/{s_k}}.
\eeq 
\end{lemma}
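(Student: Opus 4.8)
The plan is to construct the representative in two moves: first rescale so that the coordinates become integral with the prescribed ideal span, and then rescale again by a unit — using Dirichlet's unit theorem — to balance the archimedean contributions and thereby control $\|\xx\|_\star$.

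For the first move I would start from an arbitrary representative $\yy=(y_0,\dots,y_n)\in k^{n+1}$ of $x$ and look at the fractional ideal $\mathfrak{b}=\langle y_0,\dots,y_n\rangle$. Choosing $i\in\{1,\dots,h\}$ with $[\mathfrak{b}]=[\mathfrak{a}_i]$ in the class group, I obtain $\alpha\in k^*$ with $\alpha\mathfrak{b}=\mathfrak{a}_i$, and after replacing $\yy$ by $\alpha\yy$ I may assume $\yy\in\mathfrak{o}^{n+1}$ with $\langle y_0,\dots,y_n\rangle=\mathfrak{a}_i$ (here one uses $\mathfrak{a}_i\subseteq\mathfrak{o}$). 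Every integral representative of $x$ with this same span is of the form $u\yy$ with $u\in\mathfrak{o}^*$ a unit, and multiplying by a unit changes neither the integrality nor the $\mathfrak{o}$-span, so the whole problem reduces to choosing $u$ so that $\|u\yy\|_\star$ is small.

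Here I would use the explicit shape of $H_k$. Writing $M_\nu=\sup_{0\le j\le n}\|y_j\|_\nu$ for $\nu\mid\infty$, the formula for $H_k$ gives $\prod_{\nu\mid\infty}M_\nu=\N_k(\mathfrak{a}_i)H_k(x)$, and since $\|u\yy\|_\star=\sup_{\nu\mid\infty}\|u\|_\nu M_\nu$, on taking logarithms the task is to find $u$ with $\sup_{\nu\mid\infty}\big(\log\|u\|_\nu+\log M_\nu\big)$ as close as possible to the forced average $S/s_k$, where $S=\log\big(\N_k(\mathfrak{a}_i)H_k(x)\big)$; the averaging is legitimate because $\sum_{\nu\mid\infty}\log\|u\|_\nu=0$ by the product formula \eqref{productrule} (a unit satisfies $\|u\|_\nu=1$ at every finite place). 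Now Dirichlet's unit theorem tells us that $u\mapsto(\log\|u\|_\nu)_{\nu\mid\infty}$ maps $\mathfrak{o}^*$ onto a lattice $\Lambda$ of full rank $s_k-1$ inside the trace-zero hyperplane of $\RR^{s_k}$. The target vector $(S/s_k-\log M_\nu)_{\nu\mid\infty}$ lies in that same hyperplane, hence it is within sup-norm distance $\rho_k$ of some point $(\log\|u\|_\nu)_{\nu\mid\infty}$ of $\Lambda$, with $\rho_k$ the covering radius of $\Lambda$, a constant depending only on $k$. For that $u$ each coordinate obeys $\log\|u\|_\nu+\log M_\nu\le S/s_k+\rho_k$, so that
\[
\|u\yy\|_\star\le e^{\rho_k}\big(\N_k(\mathfrak{a}_i)H_k(x)\big)^{1/s_k}\ll H_k(x)^{1/s_k},
\]
since $\N_k(\mathfrak{a}_i)\le\max_{1\le j\le h}\N_k(\mathfrak{a}_j)$ is bounded in terms of $k$ alone. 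Taking $\xx=u\yy$ then yields \eqref{Hsigma}, with an implied constant depending only on $k$ and hence in particular of the claimed shape $\ll_n$.

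The only genuinely nontrivial ingredient is the geometry-of-numbers step at the end, namely that the unit lattice $\Lambda$ has finite covering radius inside its ambient hyperplane; this is exactly Dirichlet's unit theorem, and it is where I expect the real content to sit, everything else being bookkeeping with the normalised absolute values and the two identities in \eqref{productrule}. The subsidiary points that need a line of care are the verifications that $\alpha\yy$ indeed lands in $\mathfrak{o}^{n+1}$ with span exactly $\mathfrak{a}_i$, and that subsequent multiplication by a unit preserves both of those features.
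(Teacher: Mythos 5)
Your proof is correct, and it is the standard argument (the paper itself does not reproduce a proof but cites Broberg's Prop.~3, which proceeds exactly along these lines): reduce to an integral representative with the prescribed $\mathfrak{o}$-span by a class-group/rescaling step, observe that the remaining freedom is multiplication by a unit, and then use the fact that the image of $\mathfrak{o}^*$ under $u\mapsto(\log\|u\|_\nu)_{\nu\mid\infty}$ is a full-rank lattice in the trace-zero hyperplane to pick a unit that equalises the archimedean sizes up to a constant covering radius. All the small verifications you flag (that $\alpha\yy$ is integral with span exactly $\mathfrak{a}_i$, that units preserve both properties, that the target vector indeed lies in the trace-zero hyperplane) check out, and your bound in fact depends only on $k$ and the fixed ideal representatives, which is within the paper's convention for $\ll_n$.
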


According to our convention the implied constant in \eqref{Hsigma} is allowed to depend 
on $k$ in addition to $n$. In particular it is allowed to depend on the set of representative ideals $\mathfrak{a}_1,\dots,\mathfrak{a}_h$ that were fixed above.
We may now  define the sets
$$
Z'_{n+1}=
\bigcup_{1\leq i\leq h}
\left\{
(x_0, \dots , x_n) \in \mathfrak{o}^{n+1}: 
\langle x_0, \dots , x_n \rangle = \mathfrak{a}_i 
\right\}
$$
and 
$$
Z_{n+1}=
\left\{
(x_0, \dots , x_n) \in Z'_{n+1}: 
\mbox{\eqref{Hsigma} holds}
\right\}.
$$
Lemma~\ref{important1} implies that 
associated to any element of $\mathbb{P}^n(k)$ is an element of 
 $Z_{n+1}$.
Note, however, that elements of the latter set do not uniquely determine elements of the former. Nonetheless this is is sufficient for our purposes.  
It follows from   Lemma \ref{important1}  that
\beq\label{Hasymp} H_k(x)^{1/s_k} \leq  \| \xx \|_{\star} \ll H_k(x)^{1/s_k}, 
\eeq
for every $x\in \PP^n(k)$ and corresponding element $\xx \in Z_{n+1}.$

 \subsection{Conic bundle torsors}

Let  $f_{ij}\in k[u,v]$ be binary forms for $1 \leq i,j \leq 3$.  
Let $S_1\subset \AA^1\times \PP^2$ be given by the equation
$$
\sum_{i,j=1}^3f_{ij}(u,1) x_i x_j =0,
$$
and let 
$S_2\subset \AA^1\times \PP^2$ be given by 
$$
\sum_{i,j=1}^3f_{ij}(1,v) x_i x_j =0.
$$
We shall assume that 
 every principal $2 \times 2$ minor of the matrix $\mathbf{F}=(f_{ij} )$ is a binary form of even degree and, furthermore, 
 that 
$\Delta (u,v) = \det ( \mathbf{F})$ is separable. 

Let  $d_i$ be the degree of the cofactor of the diagonal element $f_{ii}$ in $\mathbf{F}$ (e.g. $d_1$ is the degree of the bottom right $2 \times 2$ minor).  We let $U_1\subset S_1$ be the open subset  given by $u \neq 0$, and we let $U_2\subset S_2$ be the open subset given by $v \neq 0$.  We obtain a conic bundle surface $S$ by 
 glueing $U_1$ and $U_2$ via the isomorphism
$$
( u; [x_1,x_2,x_3] ) \mapsto ( 1/u; [x_1 u^{-d_1/2},x_2 u^{-d_2/2},x_3 u^{-d_3/2}]).
$$ 
The morphisms  $S_i \rightarrow \PP^1$ 
given by $(u;[x_1 , x_2 , x_3 ] )\mapsto [u , 1]$ for $i=1$ and 
$(v;[x_1 , x_2 , x_3] )\mapsto [1 , v ]$ for $i=2$,
glue together to give a conic fibration $$\phi: S \rightarrow \PP^1.$$ 
Since $\Delta(u,v)$ is separable, it follows from 
 \cite[\S II.6.4, Prop.~1]{Shaf} that $S$ is non-singular.
The singular fibres of $\phi$ correspond to the roots of $\Delta (u,v)$.  
We define the degree of $S$ to be $(-K_S)^2 = 8 - r$, 
where $r$ is the number of singular fibres of $\phi$.

Consider
the variety $\mathcal{T} \subset  \mathbb{A}^2 \times \mathbb{P}^2 $ given by  \beq\label{bundleshape2}
\sum_{i,j = 1} ^3 f_{ij}(u,v) x_i x_j=0. 
\eeq  
We claim that  $\mathcal{T}$ is a torsor over $S$.
There is a morphism $\pi:\mathcal{T} \rightarrow S$ as follows.  
Let $(u,v; [x_1,x_2,x_3] )\in \mathcal{T}$.  
If $v \neq 0$ then  
$$( u/v;[x_1 v^{-d_1/2},x_2 v^{-d_2/2},x_3 v^{-d_3/2}]) \in S_1,
$$ 
while  if $u \neq 0$ then 
$$
(v/u;[x_1 u^{-d_1/2},x_2 u^{-d_2/2},x_3 u^{-d_3/2}]) \in S_2.
$$  
It is clear that 
$\mathbb{G}_m^2$ acts on $\mathcal{T}$ via 
$$
( \lambda, \mu) \mapsto (\mu,\mu;[\lambda \mu^{d_1 / 2}, \lambda \mu^{d_2 / 2}, \lambda \mu^{d_3 / 2}] ),
$$ 
and this acts on $\AA^2\times \mathbb{P}^2$ in the natural way.  This action  is free and transitive  on the fibres of $\pi$ and so $\mathcal{T}$ is indeed a torsor over $S$.
We shall henceforth refer to varieties of the shape \eqref{bundleshape2} as {\em conic bundle torsors}, whenever every principal  $2\times 2$ minor of $\mathbf{F}$ is a binary form of even degree and $\Delta(u,v)=\det(\mathbf{F})$ is separable.

\subsection{Counting rational points}

Let $\mathcal{T}$ be a conic bundle torsor, and let $\mathcal{T}_0\subset \mathcal{T}$ be the open subset on which $\Delta(u,v) \neq 0$.  
Let $\rrr \in (\mathbb{R}_{\geq 1 })^{s_k}$ be a vector with components $r_\nu$ for $\nu\mid \infty$. Define 
$$
L(\rrr)=
\left\{ x \in \mathfrak{o}: \mbox{$\| x \|_\nu \leq r_\nu$ for $\nu\mid \infty$}
\right\} .
$$  
Let  
$$
\| \rrr \| = \prod_{\nu\mid \infty} r_\nu
$$  
and put $\underline{\rrr} = (\rrr_1, \rrr_2, \rrr_3)$, for 
$\rrr_1,\rrr_2,\rrr_3 \in (\mathbb{R}_{\geq 1 })^{s_k}$.
For   given  $x \in \mathbb{P}^2(k)$, it will be a convenient abuse of notation to write
 $x \in L( \underline{\rrr})\cap Z'_3$ if there is a  representative $(x_1, x_2,x_3) \in Z'_3$
of $x$  such that $x_i \in L(\rrr_i)$ for $1\leq i\leq 3.$ 
In this case we will always associate a unique such representative.

For given $A \geq 1$ and $\underline{\rrr} = (\rrr_1, \rrr_2, \rrr_3)$, 
with $\rrr_1,\rrr_2,\rrr_3 \in (\mathbb{R}_{\geq 1 })^{s_k}$, we
define the counting function
$$
N_{\mathcal{T}_0}(A, \underline\rrr ) = \# \left\{(u,v) \in Z_2, ~x \in \mathbb{P}^2(k) : 
\begin{array}{l}
(u,v; x) \in \mathcal{T}_0(k) \\ 
A \leq 
H_k ([u,v]) 
< 2A \\
x \in L(\underline\rrr)  \cap Z'_3 
\end{array}{}
\right\}.
$$  
In Section \ref{rationalexamples} we shall show that the proof of Theorems \ref{dp4thm}--\ref{dp2thm} 
can essentially be reduced to special cases of the following general estimate.

\begin{theorem}\label{conicbundleest}  Let $\ve>0$ and let  $\mathcal{T}$ 
be a conic bundle torsor
of the shape \eqref{bundleshape2}, with $\deg \Delta(u,v)=n$. Then
$$ 
N_{\mathcal{T}_0}(A,\underline\rrr) \ll_{\eps,\mathcal{T}} A^{2+\eps} \left( 1 + \left(\frac{\| \rrr_1 \| \| \rrr_2 \| \| \rrr_3 \|}{A^n}\right)^{1/3}  \right).
$$  
The implied constant is ineffective.
\end{theorem}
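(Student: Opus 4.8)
The plan is to fiber the count over the base point $[u,v]\in\PP^1(k)$, and for each fixed $(u,v)\in Z_2$ with $A\le H_k([u,v])<2A$, to count the $k$-points of bounded height on the plane conic $Q_{u,v}\subset\PP^2$ cut out by $\sum_{i,j}f_{ij}(u,v)x_ix_j=0$. Since we restrict to $\mathcal{T}_0$, the discriminant $\Delta(u,v)=\det(\mathbf F)\ne0$, so each such conic is non-singular and Theorem \ref{qmain} applies, giving a bound for $\#\{x\in L(\underline\rrr)\cap Z_3'\}$ that is uniform in the coefficients $f_{ij}(u,v)$. That bound will have the typical shape of a ``main term'' governed by the geometry of the box $\underline\rrr$ relative to the size of the coefficients, plus a constant ``$+1$'' accounting for the possibility of very few (or one) solutions; crucially the coefficients $f_{ij}(u,v)$ have $\|f_{ij}(u,v)\|_\star\ll_{\mathcal T} A^{\deg f_{ij}}$, and the relevant determinant $\Delta(u,v)$ has norm comparable to $A^n$. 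Summing the constant term over the $\ll A^2$ admissible base points $(u,v)$ (there are $O(A^{2+\eps})$ or even $O(A^2)$ of them by a standard count of points of height $<2A$ in $\PP^1(k)$, using \eqref{Hasymp}) produces the leading $A^{2+\eps}$; summing the main term, which is roughly $(\|\rrr_1\|\|\rrr_2\|\|\rrr_3\|/\N_k(\Delta(u,v)))^{1/3}$ up to $A^\eps$ factors, is where Theorem \ref{sumestimate} enters.

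More precisely, after applying Theorem \ref{qmain} the contribution I must control is an average of the form
$$
\sum_{\substack{(u,v)\in Z_2\\ A\le H_k([u,v])<2A}}\left(\frac{\|\rrr_1\|\,\|\rrr_2\|\,\|\rrr_3\|}{\N_k(\Delta(u,v))}\right)^{1/3},
$$
possibly with additional lower-order terms from the conic count that are handled similarly or are dominated. The binary form $\Delta(u,v)$ is separable of degree $n$, and Theorem \ref{sumestimate} is precisely designed to bound sums of $\N_k(\Delta(u,v))^{-1/3}$ (or the relevant power) as $(u,v)$ ranges over a box of size $A$, yielding something like $A^{2-n/3+\eps}$. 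Pulling the fixed factor $(\|\rrr_1\|\|\rrr_2\|\|\rrr_3\|)^{1/3}$ out front then gives $A^{2+\eps}(\|\rrr_1\|\|\rrr_2\|\|\rrr_3\|/A^n)^{1/3}$, which combined with the constant-term contribution $A^{2+\eps}$ is exactly the claimed bound. The ineffectivity is inherited solely from Theorem \ref{qmain} (ultimately from Thue--Siegel--Roth over $k$), which is invoked to rule out conics with anomalously many low-height points.

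There are a few technical wrinkles to dispatch along the way. First, one must check that the hypotheses of Theorem \ref{qmain} genuinely hold for $Q_{u,v}$ whenever $(u,v;x)\in\mathcal T_0(k)$: non-singularity follows from $\Delta(u,v)\ne0$, and the requisite normalisation of the box sizes for the coefficients follows from the degree bookkeeping $\deg f_{ij}\le n$ together with the parity/evenness assumptions on the principal minors built into the definition of a conic bundle torsor. Second, the passage from $[u,v]\in\PP^1(k)$ to a representative $(u,v)\in Z_2$ uses Lemma \ref{important1} and \eqref{Hasymp}, so that $\|u\|_\star,\|v\|_\star\ll A^{1/s_k}$ and hence $\N_k(\Delta(u,v))\ll A^n$; this is what makes the $A^n$ in the denominator of the theorem statement legitimate. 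The main obstacle, and the part requiring genuine input rather than bookkeeping, is Theorem \ref{sumestimate}: controlling the average of $\N_k(\Delta(u,v))^{-1/3}$ over a two-dimensional box requires understanding how often the separable binary form $\Delta$ takes values that are ``almost'' divisible by large ideals, i.e.\ a divisor-type estimate for values of $\Delta$, uniformly over the box — but since that result is established separately in Section \ref{binarysum}, here I may simply invoke it as a black box and the remaining argument is an exercise in assembling the two input theorems.
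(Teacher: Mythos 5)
Your proposal follows the same route as the paper's proof: for each $(u,v) \in Z_2$ with $A \leq H_k([u,v]) < 2A$, invoke Theorem~\ref{qmain} on the fibre conic, sum the ``$+1$'' over the $O(A^2)$ base points, and dispose of the remaining average of $\N_k(\Delta(u,v))^{-1/3}$ via Theorem~\ref{sumestimate}. Two details deserve correction, however.

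First, Theorem~\ref{qmain} produces a bound with an extra factor $\N_k(\Delta_0(\mathbf{M}(u,v)))^{3/2}$ in the numerator (together with $\tau(\Delta(\mathbf{M}(u,v)))$, which your $A^\eps$ absorbs). You must justify discarding the $\Delta_0$ factor: the paper notes that since $\Delta(u,v)$ is separable, the argument of \cite[Lemma~7]{Broberg} gives $\N_k(\Delta_0(u,v)) \ll_\mathcal{T} 1$ uniformly for $(u,v)\in Z_2$. Without this, the quantity $(R\,\N_k(\Delta_0)^{3/2}/\N_k(\Delta))^{1/3}$ would not simplify as you assume, and the subsequent appeal to Theorem~\ref{sumestimate} would not give the stated shape. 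This is a small but genuine lacuna in your write-up. Second, you attribute the ineffectivity of the implied constant to Theorem~\ref{qmain}; in fact all implied constants in Section~\ref{ternary} are effective. The ineffectivity enters entirely through Theorem~\ref{sumestimate}, whose proof uses Lang's generalisation of Thue--Siegel--Roth (Lemma~\ref{TSR}) to control the set of $(u,v)$ giving unusually good rational approximations to roots of $\Delta$.
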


We proceed to prove this theorem subject to some technical results which will be established in due course. The first, which should be of independent interest, concerns 
counting $k$-rational points on 
conics.

For a  matrix $\mathbf{M}\in \mathrm{GL}_3(\mathfrak{o})$, let $\Delta(\mathbf{M})$ and $\Delta_0 (\mathbf{M})$ be  the  ideals  generated by the determinant of $\mathbf{M}$ and the $2 \times 2$ minors of $\mathbf{M}$, respectively.  
Let $\tau$ be the usual divisor function on integral ideals.
Then we shall establish the following result in Section \ref{ternary}.

\begin{theorem}\label{qmain}   Let $Q$ be a non-singular ternary quadratic form with underlying matrix $\mathbf{M} \in \mathrm{GL}_3(\mathfrak{o})$.  Let $ \rrr_1, \rrr_2, \rrr_3\in (\RR_{\geq 1})^{s_k}$ be given.  There are 
$$
\ll  \left( 1 + \left( \frac{ \|  \rrr_1 \|  \|  \rrr_2 \|  \|  \rrr_3 \| \N_k (\Delta_0 (\mathbf{M}))^{3/2}}{\N_k (\Delta (\mathbf{M}))}   \right)^{1/3} \right) \tau(\Delta(\mathbf{M}))
$$ 
elements $x=[\xx] \in \mathbb{P}^2(k)$ such that $Q(\xx) = 0$ and  $x \in L(\underline\rrr) \cap Z'_3$. 
\end{theorem}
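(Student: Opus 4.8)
The plan is to count projective points on the conic $Q(\xx)=0$ by first passing to a good integral representative and then parametrising the conic. Since $Q$ is non-singular over $k$, after clearing denominators we may work with a fixed integral symmetric matrix $\mathbf{M}\in\mathrm{GL}_3(\mathfrak{o})$ as in the statement. The key device is that any $k$-rational point on a non-singular conic, together with a parametrisation $\PP^1\dashrightarrow C$ through that point, gives a bijective correspondence between the points we are counting and points on $\PP^1(k)$; but rather than fixing one point I would use the \emph{lattice} structure. Concretely, a representative $\xx\in\mathfrak{o}^3$ with $Q(\xx)=0$ and $\langle x_1,x_2,x_3\rangle=\mathfrak{a}_i$ lies on the quadric cone, and the set of all such primitive integral vectors (in each ideal class) decomposes according to the "content" of the tangent plane / the $\gcd$ of the partial derivatives evaluated there. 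This is where $\Delta_0(\mathbf{M})$ and $\Delta(\mathbf{M})$ enter: the number of residue classes modulo a prime $\mathfrak{p}\nmid\Delta(\mathbf{M})$ of the conic is $\N_k(\mathfrak{p})+1$, whereas at bad primes dividing $\Delta(\mathbf{M})$ the geometry degenerates, and the divisor bound $\tau(\Delta(\mathbf{M}))$ accounts for the number of such bad residue structures.

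The main steps, in order, would be: (1) Reduce to counting primitive integral solutions $\xx\in Z'_3$ with $\|x_i\|_\star\le r_{i,\star}$-type constraints, organised by ideal class; this is bookkeeping using Lemma \ref{important1} and the definitions of $L(\rrr)$, $Z'_3$. (2) Fix a solution $\xx_0$ and consider the tangent hyperplane $H_{\xx_0}=\{\yy:\yy^{\mathsf T}\mathbf{M}\xx_0=0\}$; every other solution $\yy$ satisfies $Q(\yy)=0$ and lies on the conic, and the line through $\xx_0$ and $\yy$ meets $C$ only at those two points, giving the rational parametrisation. Points of $C$ then correspond to $[s,t]\in\PP^1(k)$ via $\yy = s^2\xx_0 + st\,\vv + t^2\,\ww$ for suitable integral vectors $\vv,\ww$ whose entries are controlled by the minors of $\mathbf{M}$ — so $\|\vv\|,\|\ww\|$ are bounded polynomially in the entries of $\mathbf{M}$, hence ultimately in $\N_k(\Delta_0(\mathbf{M}))$ and $\N_k(\Delta(\mathbf{M}))$. (3) The height/box condition $x_i\in L(\rrr_i)$ on $\yy$ translates into each archimedean coordinate of $s^2\xx_0+st\vv+t^2\ww$ being bounded by $r_{i,\nu}$; extracting the common integral content $e=\gcd(s,t)$-analogue $\mathfrak{e}$ and summing over the possible $\mathfrak{e}$ (here the divisor function $\tau(\Delta(\mathbf{M}))$ appears, because only $\mathfrak{e}$ dividing something controlled by $\Delta(\mathbf{M})$ survive) reduces to a lattice-point count for $(s,t)$ in a region of volume $\asymp (\|\rrr_1\|\|\rrr_2\|\|\rrr_3\|)^{1/3}\N_k(\Delta_0(\mathbf{M}))^{1/2}/\N_k(\Delta(\mathbf{M}))^{1/3}$ after normalising. (4) Apply a standard lattice-point estimate over $\mathfrak{o}^2$ (a Lipschitz-boundary count, as in the proof of geometry-of-numbers results over number fields) to obtain the bound $1+(\text{volume})^{1/3}$ — the cube root arising because the box in $\yy$-space is three-dimensional but the parametrising variable $(s,t)$ is quadratic, so each linear constraint on $\yy$ bounds $(s,t)$ in a region scaling like the square root, and combining the three coordinate constraints geometric-mean style gives the $1/3$ exponent. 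Finally (5), sum over ideal classes (finitely many) and over the choice of $\mathfrak{e}$ to land on the claimed estimate.

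The hardest part will be step (3)–(4): controlling how the box conditions $x_i\in L(\rrr_i)$ on the original coordinates propagate through the quadratic parametrisation, and in particular tracking the dependence on the minors $\Delta_0(\mathbf{M})$ versus the determinant $\Delta(\mathbf{M})$ uniformly, since the parametrising vectors $\vv,\ww$ are not primitive and their content is exactly governed by the bad primes. One must be careful that the lattice in which $(s,t)$ ranges has covolume comparable to $\N_k(\Delta_0(\mathbf{M}))/\N_k(\Delta(\mathbf{M}))^{?}$ and that the "$1+$" term (accounting for the region being too small to contain any non-trivial lattice point beyond the base point) is genuinely uniform. A secondary technical point is handling the archimedean places collectively: the condition is a product over $\nu\mid\infty$ of local box constraints, so the relevant "volume" is a product $\prod_\nu(\cdots)$ and one needs the normalised-absolute-value formalism from Section \ref{conicsection} to see that this product is exactly $\|\rrr_1\|\|\rrr_2\|\|\rrr_3\|$ up to the determinant factors. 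Everything else — reduction to primitive representatives, the divisor-function bound at bad primes, the final lattice-point count — is routine given the machinery already set up in the excerpt and the classical theory of conics.
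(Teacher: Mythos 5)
Your proposal takes a genuinely different route from the paper, but it has concrete gaps that I do not think can be repaired without importing the paper's main ideas. The paper proves Theorem \ref{qmain} in two stages: (i) Lemma \ref{BroLem4} and Corollary \ref{BroLem12}, a $\mathfrak{p}$-adic Hensel-lifting argument that covers all integral zeros of $Q$ by $\ll\tau(\Delta(\mathbf{M}))$ full-rank $\mathfrak{o}$-lattices each of determinant $\gg \N_k(\Delta(\mathbf{M}))/\N_k(\Delta_0(\mathbf{M}))^{3/2}$; and (ii) Theorem \ref{quadmain}, the "clean" bound $N(Q,\underline\rrr)\ll R^{1/3}$, which is proved by selecting a prime $\mathfrak{p}\nmid\Delta(\mathbf{M})$ with $\N_k(\mathfrak{p})\asymp R^{1/3}$ (or, if none exists, using an interpolation/B\'ezout argument to conclude $\leq 4$ points), splitting solutions according to their residue on the $\approx q$ points of the reduced conic over $\mathbb{F}_q$, and then using Minkowski's second theorem (Lemma \ref{minima}) together with Cramer's rule to show each residue class contributes at most two projective points. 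The rescaling in (i) converts (ii) into the stated bound. Your plan replaces both steps by a single rational parametrisation $\yy=s^2\xx_0+st\vv+t^2\ww$ through a base point, and that is where the difficulties lie.

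First, the claim that $\|\vv\|,\|\ww\|$ are "bounded polynomially in the entries of $\mathbf{M}$, hence ultimately in $\N_k(\Delta_0(\mathbf{M}))$ and $\N_k(\Delta(\mathbf{M}))$" is false: $\Delta_0(\mathbf{M})$ and $\Delta(\mathbf{M})$ are ideals generated by minors and the determinant, and their norms give essentially no control on the individual entries of $\mathbf{M}$ (e.g.\ $\mathbf{M}=\mathrm{diag}(1,1,\delta)$ has $\N_k(\Delta_0)=1$ for any $\delta$). Since the implied constant in Theorem \ref{qmain} must be uniform in $Q$, you cannot let it depend on the entries of $\mathbf{M}$, and yet the parametrising vectors genuinely do. Second, the "geometric-mean" heuristic for the cube root is not a proof. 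Each coordinate of $\yy$ is a binary quadratic form in $(s,t)$ with coefficients of comparable size, so the constraint $\|y_i\|_\nu\leq r_{i,\nu}$ for all $i$ bounds $(s,t)$ in a region whose volume is governed by a \emph{single} scale (roughly $r/h$ where $h$ reflects the base point), not by a geometric mean of the three $r_i$; obtaining the exponent $1/3$ uniformly in $Q$ is precisely what makes this theorem non-trivial, and the paper achieves it only through the choice of $\mathfrak{p}\asymp R^{1/3}$, the sublattice of determinant $q^3$, and the successive-minima argument. Third, your content-extraction step is asked to produce simultaneously the factor $\tau(\Delta(\mathbf{M}))$ and the factor $\N_k(\Delta_0)^{3/2}/\N_k(\Delta)$ inside the cube root, but both of these come in the paper from the specific lattice-covering combinatorics of Lemma \ref{BroLem4} (a local argument, prime by prime, with a careful Hensel count), and there is no argument given that the content of $\yy$ in your parametrisation is controlled by $\Delta(\mathbf{M})$ in a way that reproduces this. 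In short, the proposal lacks the two load-bearing ingredients, and the remaining steps are not enough to recover them.
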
 

This result generalises to arbitrary number fields a result of Browning and Heath-Brown
\cite[Cor.~2]{BrowningHB}.   It is important to note that the implied constant in this estimate depends at most on the field $k$, but is uniform in the coefficients of the quadratic form $Q$.

Theorem \ref{qmain} is a crucial ingredient in our proof of Theorem 
\ref{conicbundleest}. Indeed, if one takes $A=1$ in the latter result, then one obtains a version of Theorem~\ref{qmain} in which the implied constant is allowed to depend on the coefficients of $Q$. Alternatively, if $A$ is large compared to $\|\rrr_1\|\|\rrr_2\|\|\rrr_3\|$, then 
Theorem \ref{conicbundleest} shows that most conics in the family contribute very few points.

The second technical result we require concerns 
an average involving binary forms.  
The following result  will be established in Section \ref{binarysum} and is based on Lang's generalisation of the Thue--Siegel--Roth theorem to number fields.

\begin{theorem}\label{sumestimate}  
Let  $\eps >0$ and let $F(u,v) \in \mathfrak{o}[u,v]$ 
be a separable form 
of degree $n$. Then 
\begin{equation}\label{fc} 
\sum_{
\substack{ (u,v) \in \mathfrak{o}^2 \\ A^{1/s_{k}} \leq \|(u,v)\|_\star < 2A^{1/s_k} \\  F(u,v) \neq 0}}   
\frac{1}{(\N_k( F(u,v)))^{1/3} } \ll_{\eps, F} A^{2 - n/3 + \eps}.
\end{equation} 
The implied constant is ineffective.
\end{theorem}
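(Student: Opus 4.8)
The plan is to estimate the sum by partitioning the range of summation according to the size of $\N_k(F(u,v))$, i.e. into dyadic blocks where $T \le \N_k(F(u,v)) < 2T$ with $T$ ranging over powers of $2$ up to the trivial bound $\N_k(F(u,v)) \ll_F A^{2n/s_k}$ (which follows from $\|(u,v)\|_\star < 2A^{1/s_k}$, the bound \eqref{Normupper}, and the fact that $F$ has degree $n$). On such a block the summand is $\asymp T^{-1/3}$, so it suffices to bound the number $M(T)$ of pairs $(u,v) \in \mathfrak{o}^2$ with $\|(u,v)\|_\star < 2A^{1/s_k}$ and $\N_k(F(u,v)) < 2T$; then
\begin{equation*}
\sum_{\substack{(u,v)\in\mathfrak{o}^2 \\ A^{1/s_k}\le \|(u,v)\|_\star < 2A^{1/s_k} \\ F(u,v)\neq 0}} \frac{1}{\N_k(F(u,v))^{1/3}} \ll \sum_{\substack{T = 2^j \\ 1 \le T \ll_F A^{2n/s_k}}} T^{-1/3} M(T),
\end{equation*}
and one hopes for a bound of the form $M(T) \ll_{\eps,F} A^{2+\eps} + A^{\eps} T$ or similar, which when summed against $T^{-1/3}$ over the dyadic range gives the geometric-series total $A^{2+\eps} + A^{\eps}(A^{2n/s_k})^{2/3} \cdot$ — wait, that does not obviously collapse to $A^{2-n/3+\eps}$, so the shape of the count of small values of $F$ must be sharper than a naive lattice-point estimate.

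The crucial input, and the main obstacle, is controlling $M(T)$ for small $T$: one needs that a separable binary form of degree $n \ge 3$ cannot take small values too often. Factoring $F(u,v) = c\prod_{m=1}^{n}(u - \theta_m v)$ over $\bar k$ (treating the leading-coefficient/scaling issues and the $v=0$ locus separately), the condition $\N_k(F(u,v))$ small forces $(u,v)$, projectively, to be close in some archimedean place to one of the roots $\theta_m$. This is precisely where the Thue–Siegel–Roth theorem over number fields (\cite[\textsection 7, Thm.~1.1]{Lang}) enters: for $n \ge 3$ and any algebraic $\theta$, the inequality $|u/v - \theta|_\nu \ll \|(u,v)\|_\star^{-2-\delta}$ has only finitely many solutions $(u,v) \in \mathfrak{o}^2$, and more quantitatively the number of $(u,v)$ with $\|(u,v)\|_\star \le R$ and $\min_\nu \prod_{\nu\mid\infty}\|u - \theta v\|_\nu$ very small is governed by a gap principle. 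The standard way to package this is: order the relevant solutions by height; Roth's theorem bounds the number of "large" solutions by $O_{\eps,F}(1)$, while a gap principle (two good rational approximations to the same $\theta$ that are close together force a lower bound on $\N_k$ of their "cross term") bounds the number of solutions in each dyadic height window, and summing the resulting contributions of $T^{-1/3}$ weighted by the number of $(u,v)$ of that height yields the clean exponent $2 - n/3$. The ineffectivity is inherited entirely from Roth's theorem.

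Concretely I would proceed as follows. First, reduce to $F$ primitive (pull out the content, which costs only an $\eps$-power) and split off the finitely many $(u,v)$ with $v = 0$ or $u=0$. Second, for each archimedean place $\nu$ and each root $\theta_m$ of $F$ (in $k_\nu$ or a completion of a splitting field), introduce the subsum over $(u,v)$ for which $\|u - \theta_m v\|_\nu$ is the smallest among all the linear factors at all infinite places; since the product of all $\|u - \theta_j v\|_{\nu'}$ over $j$ and $\nu'\mid\infty$ is comparable to $\N_k(F(u,v))$ times a bounded factor, smallness of $\N_k(F(u,v))$ concentrates in exactly one such factor, so these subsums cover everything up to $O(1)$ multiplicity. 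Third, on such a subsum, parametrise by $h = \|(u,v)\|_\star$ dyadically, $h \asymp 2^\ell \le A^{1/s_k}$, and within each the pairs $(u,v)$ are close approximations to $\theta_m$ with $\|u-\theta_m v\|_\nu \asymp \N_k(F(u,v))/h^{(n-1)}$-ish; a Roth-type count bounds the number of such pairs with $\N_k(F(u,v)) \le Y$ by $O_{\eps,F}(1 + Y/h^{n-2+\delta})$ or, more cleanly, one applies the gap principle directly to the sum $\sum \N_k(F(u,v))^{-1/3}$ restricted to approximations of $\theta_m$ of height $\asymp h$, getting $\ll_{\eps,F} h^{2-n/3+\eps}$ per dyadic $h$ after using that distinct approximations of comparable height have cross-terms bounded below. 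Summing over the $O(\log A)$ dyadic values of $h \le A^{1/s_k}$, over the finitely many roots $\theta_m$, and over the $s_k$ infinite places gives the claimed $O_{\eps,F}(A^{2-n/3+\eps})$. The main difficulty is making the gap principle quantitative enough in the number-field setting — i.e. extracting from Lang's version of Roth a usable bound on the number of approximants in each height range — but this is essentially the classical Davenport–Roth counting of solutions to Thue inequalities transplanted to $\mathfrak{o}$, and for $n \ge 3$ the exponent works out; the separability of $F$ guarantees the $n$ roots are distinct, which is what the gap principle requires.
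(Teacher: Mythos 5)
You correctly identify Lang's number-field Roth theorem as the engine, and the idea of stratifying $(u,v)$ by which root $\alpha_i$ is nearest at each archimedean place is the right spirit; it is essentially how the paper organises the argument. But the proposal has a genuine gap at its centre: the load-bearing claim is a quantitative gap-principle bound, $M(T)\ll 1+T/h^{n-2+\delta}$ (or its reformulation as a per-window bound $\ll h^{2-n/3+\eps}$ for the sub-sum near a fixed root), which you flag as ``the main difficulty'' and do not prove. A box-counting argument in the style of \cite[Prop.~1]{Broberg} gives only $M(T)\ll \|\A\|+T/\|\A\|^{n-2}$, and the leading term $\|\A\|$ ruins the dyadic sum in $T$ when $n>3$; removing it for small $T$ is precisely where the work lies, and this is nontrivial because the nearest root may vary with the place, and the sum in \eqref{fc} runs over all of $\mathfrak{o}^2$ rather than primitive pairs, so $u/v$ may have small projective height even when $\|(u,v)\|_\star$ is large. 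There are also bookkeeping slips: the sum is already confined to a single dyadic annulus $A^{1/s_k}\leq\|(u,v)\|_\star<2A^{1/s_k}$, so the further dyadic sum over $h\leq A^{1/s_k}$ you introduce is not there, and the claimed per-window bound $\ll h^{2-n/3+\eps}$ conflates $h\asymp A^{1/s_k}$ with $A$, which differ whenever $s_k>1$.

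The paper avoids the gap principle entirely. After reducing to boxes $A_\nu\leq\sup\{\|u\|_\nu,\|v\|_\nu\}<2A_\nu$ for each $\nu\mid\infty$, it partitions $(u,v)$ by the closeness profile $\varpi=((i_\nu,q_\nu))_{\nu\mid\infty}$ encoding $\|u+\alpha_{i_\nu}v\|_\nu\asymp A_\nu^{1-q_\nu\theta}$ with $\theta=\eps/n$. On profiles with $\prod_\nu A_\nu^{1-\theta q_\nu}\geq 1$ the summand is controlled pointwise, using the lower bounds on each $\|u+\alpha_iv\|_\nu$ coming from the stratification, and the count is via Lemma~\ref{countij}. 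On the remaining (very-close) profiles Roth is applied once, only as a uniform lower bound $\N_k(F(u,v))\gg\|\A\|^{n-3-n\eps'}$ valid for every $(u,v)$ in the bad set $\mathcal C$, combined with the crude bound $\#\mathcal C\ll\|\A\|$ from Lemma~\ref{lem:5.4}; no count of good approximants in each height or $T$-range is needed. Trying to make your dyadic-in-$T$ decomposition rigorous would force you to introduce essentially the same per-place profiles in order to convert a bound on $\N_k(F(u,v))$ into per-place box conditions, so the two routes converge; the paper simply decomposes by profile first, which reduces the Roth input to a single clean lower bound.
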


We now have everything in place to establish Theorem \ref{conicbundleest}, conditionally on the  technical results.
Let $\mathcal{T}$ be a conic bundle  torsor of the shape \eqref{bundleshape2}.
We shall proceed  by counting the number of points on the fibres $C_{u,v}$ of $\mathcal{T}$ above $(u,v)\in \mathbb{A}^2$,  uniformly in $u,v$.
  Given $(u,v) \in \mathbb{A}^2$ such that $\Delta(u,v)\neq 0$, let $\mathbf{M}(u,v)$ be the matrix which produces  the conic $C_{u,v}$ above it.
We have  $\Delta(u,v)=\Delta(\mathbf{M}(u,v))$ and we put $\Delta_0(u,v)=\Delta_0(\mathbf{M}(u,v))$.
By the trivial estimate for the divisor function we have 
$$
\tau(\Delta(u,v))\ll_\ve (\N_k(\Delta(u,v)))^\ve.
$$
Likewise, since $\Delta(u,v)$ is separable, the proof of 
\cite[Lemma 7]{Broberg} shows that 
$\N_k(\Delta_0(u,v)) \ll_\mathcal{T} 1$, for $(u,v) \in Z_2$.  

For given $(u,v)\in Z_2$ such that $\Delta(u,v)\neq 0$, we put
$$
N(u,v,\underline\rrr) = \# \{x\in \mathbb{P}^2(k) \cap C_{u,v}:  x \in L(\underline\rrr) \cap Z'_3  \}.
$$
It  follows from  Theorem \ref{qmain} that
$$
N(u,v,\underline\rrr) \ll_{\ve, \mathcal{T}} 
\left(1 + \frac{R^{1/3}}{\N_k(\Delta(u,v))^{1/3}} \right) (\N_k(\Delta(u,v)))^{\eps},
$$ 
for any $\ve>0$, 
where  $R=\|  \rrr_1 \|  \|  \rrr_2 \|  \|  \rrr_3 \|$.
We easily obtain
\begin{align*}N_{\mathcal{T}_0}(A, \underline\rrr ) 
&\leq \sum_{\substack{ (u,v) \in Z_2 \\ A \leq H_k([u,v]) < 2A }} N(u,v, \underline\rrr) \\
& \ll_{\ve,\mathcal{T}} A^\ve \sum_{\substack{ (u,v) \in Z_2 \\ A \leq H_k([u,v]) < 2A }} \left( 1 + \frac{R^{1/3}}{\N_k( \Delta( u,v))^{1/3} } \right).
\end{align*}
Finally, recalling \eqref{Hasymp}, an application of 
Theorem  \ref{sumestimate} yields
$$
N_{\mathcal{T}_0}(A, \underline\rrr ) 
\ll_{\eps, \mathcal{T}} A^{2+2\eps} \left( 1 + \frac{R^{1/3}}{A^{n/3}}  \right).
$$
We complete the proof of Theorem 	\ref{conicbundleest} upon redefining the choice of $\eps$.

\begin{rem}\label{remark}
Although we require  it for Theorem \ref{conicbundleest}, it should be noted that Theorems \ref{dp4thm}--\ref{dp2thm} do not strictly require Theorem \ref{sumestimate}  and it is possible to recover effectivity with extra work. 
Instead one can make use of the fact that there are $O_{\ve,F}(G^{1/n}A)$ points $(u,v) \in Z_2$, with $H_k([u,v]) \leq A$ and $\N_k(F(u,v)) \leq G$ (see \cite[Lemma~9]{Broberg}).  For Theorem~\ref{conicbundleest}, however, this would only produce the desired contribution  when $G$ has order of magnitude $A^n$. For Theorems \ref{dp4thm} and \ref{dp3thm},
moreover,  using this alternative bound would  require  us to handle a subset of the fibres in a different manner (see  \cite[Prop.~7 and Lemma 8]{Broberg}).
\end{rem}

\section{Counting points on del Pezzo  surfaces}\label{rationalexamples}

\subsection{Heights and morphisms}\label{heights}  
We begin  with some general facts about the behaviour of heights under morphisms, as described by Serre
\cite[\S 2]{Serre1}.
Let  $X$ be 
a del Pezzo surface of degree $d\in \{2,3,4\}$ over a number field $k$ and let $U\subset X$ be the Zariski open subset obtained by deleting the exceptional curves.
For a morphism $g:X\rightarrow \mathbb{P}^{\ell}$ we write 
$H_g(x)=H_k(g(x))$, for any $x\in X(k)$, where $H_k$ is the  height on $\mathbb{P}^\ell(k)$.

Suppose we are given  morphisms
$$
f_i : X \rightarrow \mathbb{P}^1, \quad i=1,\dots,m.
$$
Let $f$ be the morphism 
$$ 
f : X \rightarrow \mathbb{P}^1\times \dots \times \mathbb{P}^1=(\mathbb{P}^1)^m,
$$ 
given by $(f_1, \dots , f_m)$.  
Now let
 $\psi: 
(\mathbb{P}^1)^m
\rightarrow \mathbb{P}^{2^m - 1}$
 be the multilinear Segre embedding, so then $\psi \circ f$ is a morphism.  
We shall assume that the morphism $\psi \circ f$ takes the shape 
$$
\psi \circ f(x) = [\phi_0(x) , \dots , \phi_{2^m - 1}(x)]
$$ 
on $U$, 
where $\phi_0,\ldots,\phi_{2^m-1}$ are  homogeneous polynomials of degree $e$ which do not simultaneously vanish on $X$.  

Let $ p = ([u_1 , v_1] , \dots , [u_m , v_m]) \in (\mathbb{P}^{1}(k))^m$ and  
$\psi(p) = [y_0 , \dots , y_{2^m - 1}]$. Then 
$$
\sup_{0\leq i\leq 2^{m}-1} \|y_i\|_\nu = \prod_{i=1}^m \sup \{ \|u_i \|_\nu , \|v_i \|_\nu \}
$$ 
for every $\nu\in \Omega$.  
Thus  we have 
$$
\sum_{i=1}^m \log H_{f_i} = \log H_{\psi \circ f}.
$$  
Furthermore, the functoriality of heights yields
$$
\log H_{\psi \circ f} = e \cdot \log H_k + O_{f,X}(1).
$$ 
It follows that there is an absolute constant $c_1=c_1(f,X)$ such that 
$$
\prod_{1\leq i\leq m}H_k(f_i(x)) \leq c_1^m  H_k(x)^e, 
$$
for any $x \in U(k)$.  Thus we have 
$$
H_k(f_i(x)) \leq c_1 H_k(x)^{e/m},
$$
for at least one $i\in \{1,\dots,m\}$.

In our work it will suffice to estimate  the counting function
$$
N(U,k,B) = \# \{ x \in U(k): H_{k}(x) \leq B \}.
$$
When $d=2$ and $X$ is embedded in $\mathbb{P}(2,1,1,1)$ via an equation of the form
\eqref{eq:dp2},  the height is taken to be  $H_k([x_1,x_2,x_3])$  on $\PP^2(k)$.   
When $d\in \{3,4\}$ it is taken to be the height on $\PP^d(k)$ that arises through the anticanonical embedding of $X$ in $\PP^d$.
We may now conclude as follows. 

\begin{lemma}\label{lem:height}
There exists a constant $c_1=c_1(f,X)>0$ such that 
$$
N(U,k,B) \leq \sum_{i=1}^m n_i(B),
$$
where
$$
n_i(B)=\#\left\{x \in U(k):  H_k(x) \leq B \mbox{ and } H_k(f_i(x)) \leq c_1 B^{e/m}\right\}.
$$ 
\end{lemma}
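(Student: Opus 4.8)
The plan is to package the height estimates already obtained above into the stated inequality; the argument is immediate once the multiplicativity of heights under the Segre embedding has been exploited. The crucial input is the bound
$$
\prod_{1\le i\le m} H_k(f_i(x)) \le c_1^m H_k(x)^e \qquad (x\in U(k)),
$$
which was derived from the place-by-place identity $\sup_i\|y_i\|_\nu = \prod_{i=1}^m \sup\{\|u_i\|_\nu,\|v_i\|_\nu\}$, the consequent relation $\sum_i \log H_{f_i} = \log H_{\psi\circ f}$, and the functoriality of heights $\log H_{\psi\circ f} = e\log H_k + O_{f,X}(1)$; the constant $c_1=c_1(f,X)$ is precisely the one produced there.

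First I would fix a point $x\in U(k)$ with $H_k(x)\le B$. Here each $f_i(x)\in\PP^1(k)$ is well-defined, the $f_i$ being $k$-morphisms and the polynomials $\phi_0,\dots,\phi_{2^m-1}$ representing $\psi\circ f$ having no common zero on $U$; in particular $H_k(f_i(x))\ge 1$. Choosing an index $i(x)\in\{1,\dots,m\}$ for which $H_k(f_{i}(x))$ is minimal, one gets
$$
H_k(f_{i(x)}(x))^m \le \prod_{1\le j\le m} H_k(f_j(x)) \le c_1^m H_k(x)^e \le c_1^m B^e,
$$
whence $H_k(f_{i(x)}(x))\le c_1 B^{e/m}$. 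Thus $x$ is counted by $n_{i(x)}(B)$.

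Finally I would observe that the assignment $x\mapsto i(x)$ sends every $x\in U(k)$ with $H_k(x)\le B$ to some index in $\{1,\dots,m\}$, so the set counted by $N(U,k,B)$ is contained in the union over $i$ of the sets counted by the $n_i(B)$; taking cardinalities and using subadditivity gives $N(U,k,B)\le\sum_{i=1}^m n_i(B)$, as required. There is no genuine obstacle at this stage: all the content lies in the height comparison established beforehand, and the only points needing a moment's care are the well-definedness of $H_k\circ f_i$ on $U$ and the harmless overcounting incurred by summing over all indices rather than partitioning.
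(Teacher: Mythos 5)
Your proof is correct and follows exactly the route the paper takes: it invokes the already-derived bound $\prod_i H_k(f_i(x)) \le c_1^m H_k(x)^e$, deduces that some index $i$ satisfies $H_k(f_i(x)) \le c_1 B^{e/m}$ by choosing the one minimizing the height, and concludes by counting. No meaningful difference from the paper's argument.
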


For the remainder of Section 
\ref{rationalexamples}
we shall allow all of the implied constants to depend implicitly on the number field $k$, the del Pezzo surface $X$ and the 
small parameter $\ve>0$ appearing in Theorems \ref{dp4thm}--\ref{dp2thm}.
Furthermore, in the light of Theorem \ref{conicbundleest}, we shall allow the implied constants to be ineffective.

\subsection{Proof of Theorem \ref{dp4thm}}  
After a change of variables we may assume that $X$ is given by 
\begin{align*} 
x_0 x_1 - x_2 x_3 &= 0 \\  
Q(x_0, x_1, x_2 , x_3) + a x_4^2
&= 0,
\end{align*}
for a  quadratic form $Q\in \mathfrak{o}[x_0,x_1,x_2,x_3]$ and a non-zero  element $a \in \mathfrak{o}$. 
Let $U\subset X$ be the subset obtained by deleting the 16 lines from  $X$.

We will consider two conic fibrations $f_1 , f_2 : U \rightarrow \mathbb{P}^1$, given by
\begin{align*}  f_1(x) &= 
\begin{cases}
[x_0 , x_2] & \text{if} \, \, (x_0 , x_2) \neq (0,0),  \\
[x_3 , x_1]  & \text{if} \, \, (x_3 , x_1) \neq (0,0),
\end{cases}\\
f_2(x) &= 
\begin{cases}
[x_0 , x_3]  & \text{if} \, \, (x_0 , x_3) \neq (0,0),  \\
[x_2 , x_1]  & \text{if} \, \, (x_2 , x_1) \neq (0,0).  \end{cases}
\end{align*}
Note that the definitions agree where the open sets intersect,  so that this is a well-defined morphism. Moreover, the open sets cover $X$ since there are no points on $X$ with $x_0 = x_1 = x_2 = x_3=0$.  
Define $f: X \rightarrow \mathbb{P}^1 \times \mathbb{P}^1$ to be the morphism given by $x \mapsto (f_1(x),f_2(x))$.  With the notation of Section \ref{heights}, we confirm that$$
\psi \circ f (x) =   [x_0, x_3, x_2 , x_1],
$$ 
for all $x \in U$. Thus we can take $e = 1$ and $m=2$ in Lemma \ref{lem:height}.  
Our task is then to show that $n_i(B)\ll B^{1+\ve}$ for $i=1,2$.
Without loss of generality we shall show this for $i=1$, with 
$$
n_1(B)=\#\left\{x \in U(k):  H_k(x) \leq B \mbox{ and } H_k(f_1(x)) \leq c_1 B^{1/2}\right\}
$$ 
and an appropriate constant $c_1=c_1(X)>0$.

For each $p \in \mathbb{P}^1(k)$ with  $H_k(p)\leq c_1B^{1/2}$ we can choose a representative $(u,v) \in Z_2$, by Lemma \ref{important1}.
Let $n_1 (u,v, B)$ be the number of points in $f_1^{-1}([u,v]) \cap U(k)$ with height at most $B$.  
Then 
$$
n_1 (B) \leq \sum_{\substack{(u,v ) \in Z_2  \\  H_k([u,v]) \leq c_1 B^{1/2}} } n_1(B;u,v).$$ 
Given $\A\in (\RR_{\geq 1})^{s_k}$, 
we split the right hand side into dyadic intervals, writing 
\beq\label{n1BS} 
n_1 (\A,B) = 
\sum_{\substack{(u,v ) \in Z_2  \\  
A_\nu\leq \sup\{\|u\|_\nu, \|v\|_\nu\} <2A_\nu
}}
n_1(B;u,v).\eeq 
Let  $A=\|\A\|$. 
It then follows from  \eqref{Hasymp} that any point $(u,v)$ in the sum satisfies 
$A\ll H_{k}([u,v])\ll A$.
We are clearly only interested in  $A \ll B^{1/2}$.  

Now $(ux,yv,xv,yu,z) \in f_1^{-1}([u,v])$ if and only if
$$
Q(ux,yv,xv,yu) + a z^2 = 0.
$$
This is a conic bundle torsor $\mathcal{T}$, as in \eqref{bundleshape2}, with  $\deg \Delta(u,v) = 4$.  
On multiplying  $(x,y,z)$ by an appropriate scalar,  Lemma \ref{important1} ensures that
we will have $\langle ux,yv,xv,yu,z\rangle = \mathfrak{a}_i$ for some $i\in \{1,\dots,h\}$ 
and 
\beq\label{eq421} 
\| (ux,yv,xv,yu,z) \|_\star \leq c_2 H_k ([ux,yv,xv,yu,z])^{1/s_k},
\eeq 
for some constant $c_2>0$.  
We must count the number of such points which lie on  $\mathcal{T}$. Moreover, it suffices to work on 
the open set $\mathcal{T}_0$ since we wish to avoid points lying on lines in $X$.

Our goal is to  apply Theorem \ref{conicbundleest}. A triple $(x,y,z)$ satisfying the above restrictions does not necessarily have  $x,y\in  \mathfrak{o}$.  
On multiplying  $(x,y,z)$ by a suitable scalar and adjusting  $c_2$ appropriately
in \eqref{eq421}, however, we can proceed
 under the assumption that $(x,y,z) \in Z'_3$.
In conclusion,  $n_1(B;u,v)$ is bounded   by the number of elements $(x,y,z) \in Z'_3$ satisfying $
(ux,yv,xv,yu,z) \in U(k)$ and 
\beq\label{redefinen1} 
\sup \left\{\|ux\|_\nu,  \|yv\|_\nu ,\|xv\|_\nu, \|yu\|_\nu,\|z\|_\nu\right\} \leq c_2 B^{1/s_k}.
\eeq  
We redefine $n_1(B;u,v)$ to be this cardinality.  

By \eqref{n1BS} and \eqref{redefinen1}, for every point counted by $n_1(B;u,v)$, there is an absolute constant $c_3>0$ such that
$$
\|x \|_\nu , \|y \|_\nu \leq \frac{c_2 B^{1/ s_k}}{ \sup\{ \| u \|_\nu, \| v \|_\nu  \}} \leq c_3B^{1/{s_k}}/A_\nu,
$$ 
for every $\nu\mid \infty$. Moreover, we have  $\| z \|_\nu \leq c_2 B^{1/ s_k} $, for each $\nu\mid \infty$.  
Hence we can apply Theorem \ref{conicbundleest} with
$r_{1,\nu} = r_{2,\nu} = c_3B^{1/{s_k}}/A_\nu$ and  $r_{3, \nu} = c_2 B^{1/ s_k}$ and $n = 4$.  This yields the estimate
$$
n_1 (\A,B) \ll  A^{2+\eps} +  B A^{\eps}.
$$ 
Summing over dyadic values of $A_\nu$, with  $A=\|\A\| \ll B^{1/2}$, therefore leads to the desired bound $n_1 (B) \ll B^{1 + \eps}$.

\subsection{Proof of Theorem \ref{dp3thm}}  

The argument in this section and the next is similar to the proof of Theorem \ref{dp4thm} and so we shall allow ourselves to be more concise.
Suppose $X$ is a del Pezzo surface of degree 3 over $k$, with  three coplanar lines defined over $k$.  
After a possible change of variables we may assume that $X\subset \PP^3$ is given by 
$$
L_1 L_2 L_3 = x_0 Q,
$$ 
where each $L_i\in \mathfrak{o}[x_1,x_2,x_3]$ is a linear form and  $Q\in \mathfrak{o}[x_0,\dots,x_3]$ is a quadratic form. Following Broberg \cite{Broberg}, we define three conic bundle morphisms $f_1, f_2, f_3: X\rightarrow \PP^1$ via
\begin{align*}
f_1(x) &=  
\begin{cases}
[x_0 , L_1] &\textrm{ if } (x_0 ,L_1) \neq (0,0),  \\
[L_2 L_3 , Q] & \textrm{ if } (L_2 L_3 , Q) \neq (0,0), \end{cases}\\
f_2(x) &= \begin{cases}
[x_0 , L_2] &\textrm{ if } (x_0 ,L_2) \neq (0,0),  \\
[L_1 L_3 , Q] & \textrm{ if } (L_1 L_3 , Q) \neq (0,0), 
\end{cases}\\
f_3(x) &= \begin{cases}
[x_0 , L_3] &\textrm{ if } (x_0 ,L_3) \neq (0,0),  \\
[L_1 L_2 , Q] & \textrm{ if } (L_1 L_2 , Q) \neq (0,0). 
\end{cases} \end{align*}
These morphisms are all well-defined, since $X$ is non-singular.
In the  notation of Section \ref{heights} we have  
$$
\psi \circ f (x) = [x_0^2, x_0 L_3, x_0 L_2 , x_0 L_1, L_2 L_3 , L_1 L_3 , L_1 L_2 , Q ],
$$ 
for all $x\in U$,
so we take $e = 2$ and $m=3$ in Lemma \ref{lem:height}.
We need to show that 
$n_i(B) \ll B^{4/3+\eps}$, for $1\leq i\leq 3$. Without loss of generality we  shall do so for $i=1$, with 
$$
n_1(B)=\#\left\{x \in U(k):  H_k(x) \leq B \mbox{ and } H_k(f_1(x)) \leq c_1 B^{2/3}\right\}.
$$

After a change of variables we may assume that $L_1 = x_1$ and $L_2 = x_2$.  
We look at the fibres of $f_1 : X \rightarrow \mathbb{P}^1$.  The fibre  above a point $[u,v]$ is the set of points 
$[u y_1 , v y_1 , y_2 , y_3]$ where $y_1 \neq 0$ and 
\begin{equation}\label{eq:nose}
v y_2 L_3(v y_1,y_2 , y_3) = u Q(u y_1 , v y_1, y_2, y_3) .
\end{equation}
This is a conic bundle torsor $\mathcal{T}$, as in \eqref{bundleshape2}, with 
$\deg \Delta(u,v)=5$.
Let $n_1 (B;u,v)$ be the number of points in $f_1^{-1}([u,v]) \cap U(k)$ with height at most~$B$.  Then 
$$
n_1 (B) \leq \sum_{\substack{(u,v ) \in Z_2  \\  H_k([u,v]) \leq c_1 B^{2/3} }} n_1(B;u,v).
$$ 
As in \eqref{n1BS},
we split the right hand side into dyadic intervals $ n_1 (\A,B) $  for suitable $\A\in (\RR_{\geq 1})^{s_k}$ such that $A=\|\A\| \ll B^{2/3}$.

Using an identical argument to that leading up to \eqref{redefinen1}, we can redefine $n_1(B;u,v)$ to be the number of points $(y_1 , y_2 , y_3) \in Z'_3$ such that 
\eqref{eq:nose} holds and  
$$ 
\sup \{ \|u y_1 \|_\nu, \|v y_1 \|_\nu, \| y_2 \|_\nu, \| y_3 \|_\nu  \} \leq c_2 B^{1/s_k}
$$ 
for some constant $c_2>0$.  
Similarly to before, we see there is a constant 
 $c_3>0$ such that 
 Theorem \ref{conicbundleest} can be applied with $r_{1,\nu} = c_3 B^{1/ s_k}/ A_\nu$ and 
$r_{2, \nu} = r_{3, \nu} = c_2 B^{1/ s_k}$ and $n = 5$.  
This shows that 
$$
n_1 (\A , B ) \ll A^{2+\eps} + B A^{\eps}, 
$$ 
and we obtain the desired conclusion by summing over dyadic values of  $A_\nu$, with 
$A \ll B^{2/3}$.

\subsection{Proof of Theorem \ref{dp2thm}}

We suppose
 that $X\subset \PP(2,1,1,1)$ is a del Pezzo surface of degree $2$, as in 
\eqref{eq:dp2}, which 
contains a non-singular conic $C$  defined over $k$.  
Following an argument suggested to us by Per Salberger, 
we will show  that it may be  given by an  equation of the form
$$
t^2 = q_1 q_2 + q_3^2,
$$ 
where $q_1,q_2,q_3\in k[x_1,x_2,x_3]$ are quadratic forms such that 
$q_1 q_2 + q_3^2$ is a non-singular quartic form. 

Any non-singular conic $C$ contained in $X$
 is an 
 irreducible curve satisfying  $(C,-K_X)=2$. Hence $C$
is mapped isomorphically onto a conic in $\PP^2$,  under the
double cover map  
$ [t,x_1,x_2,x_3]\mapsto [x_1,x_2,x_3].
$
Let us  suppose  that this conic in $\PP^2$ is  given by the equation 
$q_1=0$, for a non-singular ternary quadratic form
$q_1$ defined over $k$.
We must have one more relation between $t$ and the six quadratic monomials 
in $x_1,x_2,x_3$. This gives a further equation $t-q_3=0$ on $C$, for a quadratic form
$q_3$ defined over $k$.  Substituting this into the equation for $X$ we see that $f-q_3^2$ vanishes on the conic $q_1=0$ in $\PP^2$.
Hence $f=q_1q_2+q_3^2$ for a further 
quadratic form
$q_2$ defined over $k$, such that $q_1q_2+q_3^2$ is non-singular, 
which thereby establishes the claim.
%
%

We may henceforth  assume that $q_1,q_2,q_3$ are all defined over $\mathfrak{o}$ on absorbing a suitable constant 
into $t$. 
The 56 exceptional curves are the preimages of the 28 bitangents to the quartic plane
curve  $q_1 q_2 + q_3^2=0$. We let $U\subset X$ be the open subset which avoids all of these. 
As indicated previously, we take our height function $H_k : \mathbb{P}(2,1,1,1)(k) \mapsto \mathbb{R}_{\geq 1}$ to be
$$
[t,\xx] \mapsto \prod_{\nu\in \Omega} \sup \{   \| x_1 \|_\nu, \| x_2 \|_\nu  , \| x_3 \|_\nu \}.
$$ 
We define the morphisms $f_1,f_2:X\rightarrow \PP^1$ via
\begin{align*}
f_1 ([t,\xx]) &= 
\begin{cases}
[t-q_3, q_1]  &\textrm{if } (t-q_3, q_1) \neq (0,0),  \\
[ q_2 , t+q_3]  &\textrm{if } ( q_2 , t+q_3) \neq (0,0), 
\end{cases}\\
f_2 ([t,\xx]) &= 
\begin{cases}
[t-q_3, q_2] &\textrm{if }(t-q_3, q_2) \neq (0,0),  \\
[q_1 , t+q_3]  &\textrm{if }  (q_1 , t+q_3) \neq (0,0). 
\end{cases}\end{align*}
These morphisms are well-defined 
since $q_1q_2+q_3^2$ is non-singular.
In this setting we have  
$$
\psi \circ f ([t,\xx]) = [t- q_3, q_2, q_1, t+ q_3],
$$ 
for all $[t,\xx]\in U$,
so we take $e = 2$ and 
$m=2$ in Lemma \ref{lem:height}.
We wish to show that 
$n_i(B) \ll B^{2+\eps}$ for $i=1,2$.  Without loss of generality we shall show this for $n_1(B)$, with
$$
n_1(B)=\#\left\{x \in U(k):  H_k(x) \leq B \mbox{ and } H_k(f_1(x)) \leq c_1 B\right\}.
$$

We look at the fibres $f_1 : X \rightarrow \mathbb{P}^1$ in $U$.  Defining $n_1 (B;u,v)$ to be the number of rational points in $f_1^{-1}([u,v]) \cap U$ with height at most $B$,  we have 
$$
n_1(B) \leq \sum_{\substack{(u,v) \in Z_2 \\ H_k([u,v]) \leq c_1 B}} n_1(B;u,v).
$$
We shall consider the contribution  $ n_1 (\A,B)$ 
from dyadic intervals, 
as in \eqref{n1BS}, for $\A\in (\RR_{\geq 1})^{s_k}$ such that $A=\|\A\| \ll B$.

Suppose $[t,\xx] \in f_1^{-1}([u,v]) \cap U$ for $uv \neq 0$.  Then the point $(u,v;[\xx])$ 
is constrained to lie on the variety $\mathcal{T}\subset \AA^2\times\PP^2$, given by the equation
$$
q_1(\xx) u^2 + 2 q_3(\xx) uv - q_2(\xx) v^2 =0.
$$
This is a conic bundle torsor of  the form \eqref{bundleshape2}, with $\deg \Delta(u,v)=6$. 
Thus  
 Theorem \ref{conicbundleest} can be applied directly with $r_{i,\nu} = B^{1/s_k}$ for $i=1,2,3$ and 
 $n =6$, giving 
$$
n_1(A,B) \ll A^{2+\eps}+ B A^\eps.
$$  
Summing for dyadic  $A \ll B$ shows that $n_1(B) \ll B^{2+ \eps}$, as claimed.

\section{Ternary forms}\label{ternary}

In this section we establish Theorem \ref{qmain}.
The structure of the proof of Theorem \ref{qmain} is similar to \cite[Thm.~6]{Broberg} (which in turn follows the proof of  \cite[Thm.~2]{HB}).  
The main idea is to cover the solutions to $Q(\xx) = 0$ by a relatively small number of lattices, each of which has  a large determinant.  This is done in Section \ref{s:fermat},
after first  recalling some basic facts about lattices over number fields in Section \ref{s:lattices}.
Then, for given  $\underline\rrr = (\rrr_1, \rrr_2, \rrr_3),$ 
we obtain in Section \ref{s:uniform} a uniform estimate for the number of points $x \in \mathbb{P}^2(k)$ with representative $\xx=(x_1, x_2 , x_3) \in Z'_3,$ such that $x_i \in L(\rrr_i)$ and $Q(\xx) = 0$. 
This is then used to deduce Theorem \ref{qmain} by rescaling the lattices appropriately. 
Throughout Section~\ref{ternary} we return to our convention that all of the implied constants are allowed to depend at most upon the number field $k$. Furthermore, all of the implied constants in this section are effective.

\subsection{Lattices}\label{s:lattices} 
We say that an $\mathfrak{o}$-module $\Lambda$ in $k^n$ is an {\em $\mathfrak{o}$-lattice in $k^n$}, if it is finitely generated and contains a basis of $k^n$ over $k$.  We can define its determinant $\det \Lambda$ to be  the index 
$[\mathfrak{o}^n:\Lambda]$
as an additive subgroup.
If $\Lambda$ is an $\mathfrak{o}$-lattice in $k^n$ and $\nu\in \Omega$ is a finite place, then $\Lambda_\nu = \Lambda \otimes_{\mathfrak{o}} \mathfrak{o}_\nu$ is a free $\mathfrak{o}_\nu$-module in $k_\nu^n$ such that $\Lambda_\nu$ contains a basis for $k_\nu^n$ over $k_\nu$, where $\mathfrak{o}_\nu$ is the ring of integers of $k_\nu$.  We say that such an $\mathfrak{o}_\nu$-module is an {\em $\mathfrak{o}_\nu$-lattice in $k_\nu^n$.}
The following results are standard (see 
\cite[Thm.~4]{Broberg} and 
  \cite[Prop.~5]{Broberg}, respectively).

\begin{lemma}\label{LocalLat}  For each finite place $\nu\in \Omega$, let $L_\nu$ be an $\mathfrak{o}_\nu$-lattice in $k_\nu^n$ such that $L_\nu = \mathfrak{o}_\nu^n$ for almost all $\nu$.  If $\Lambda = \cap_{\nu\nmid \infty} L_\nu \cap k^n$, then $\Lambda$ is the unique $\mathfrak{o}$-lattice in $k^n$ such that $\Lambda_\nu = L_\nu$ for all $\nu\in \Omega$.  
\end{lemma}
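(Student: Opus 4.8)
The plan is to prove Lemma~\ref{LocalLat} by the standard patching argument for lattices over a Dedekind domain, exploiting the fact that $\mathfrak{o}$ is determined by its localisations. First I would reduce to the case where all but finitely many of the $L_\nu$ equal $\mathfrak{o}_\nu^n$: let $T$ be the finite set of finite places at which $L_\nu\neq \mathfrak{o}_\nu^n$. For each $\nu\in T$ the lattice $L_\nu$ differs from $\mathfrak{o}_\nu^n$ only up to a bounded power of the maximal ideal, so there is an element $c_\nu\in k^*$ with $c_\nu\mathfrak{o}_\nu^n\subseteq L_\nu\subseteq c_\nu^{-1}\mathfrak{o}_\nu^n$; by weak approximation (or the Chinese Remainder Theorem on the relevant ideals) one can choose a single $c\in k^*$ that works simultaneously at all $\nu\in T$, whence $c\mathfrak{o}_\nu^n\subseteq L_\nu\subseteq c^{-1}\mathfrak{o}_\nu^n$ for every finite $\nu$ (trivially so outside $T$). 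Setting $\Lambda = \bigcap_{\nu\nmid\infty} L_\nu\cap k^n$, this sandwich shows $c\mathfrak{o}^n\subseteq \Lambda\subseteq c^{-1}\mathfrak{o}^n$, so $\Lambda$ is a finitely generated $\mathfrak{o}$-submodule of $k^n$ (being between two scalings of $\mathfrak{o}^n$ and $\mathfrak{o}$ Noetherian) containing a $k$-basis of $k^n$; hence $\Lambda$ is an $\mathfrak{o}$-lattice.

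Next I would verify the localisation property $\Lambda_\nu = L_\nu$ for every finite $\nu$. The inclusion $\Lambda_\nu\subseteq L_\nu$ is immediate since $\Lambda\subseteq L_\nu$ and $L_\nu$ is an $\mathfrak{o}_\nu$-module, so it absorbs $\Lambda\otimes_\mathfrak{o}\mathfrak{o}_\nu$. For the reverse inclusion one uses strong approximation: given $\ell\in L_\nu$, one approximates it by an element of $k^n$ that is $\mathfrak{o}_{\nu'}$-integral with respect to $L_{\nu'}$ at all other finite places $\nu'$ while staying $\nu$-close to $\ell$; concretely, since only finitely many $L_{\nu'}$ are not $\mathfrak{o}_{\nu'}^n$, one solves a finite system of congruences to produce $\lambda\in\Lambda$ with $\lambda\equiv \ell$ to high $\nu$-adic precision, which forces $\ell\in\Lambda_\nu$ because $\Lambda_\nu$ is $\nu$-adically closed (it is a finitely generated $\mathfrak{o}_\nu$-module, hence complete). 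This gives $L_\nu\subseteq\Lambda_\nu$ and therefore equality. At the infinite places there is nothing to check since the statement only concerns finite $\nu$ (the phrase ``for all $\nu\in\Omega$'' in the conclusion is vacuous at infinite places, or one simply notes $\Lambda\otimes k_\nu = k_\nu^n$).

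For uniqueness, suppose $\Lambda'$ is another $\mathfrak{o}$-lattice with $\Lambda'_\nu = L_\nu$ for all finite $\nu$. An $\mathfrak{o}$-lattice in $k^n$ is recovered from its localisations by $\Lambda' = \bigcap_{\nu\nmid\infty}\Lambda'_\nu\cap k^n$ — this is the elementary fact that a finitely generated $\mathfrak{o}$-module equals the intersection of its localisations inside $k^n$, valid because $\mathfrak{o} = \bigcap_{\nu\nmid\infty}\mathfrak{o}_\nu$ as subrings of $k$. Hence $\Lambda' = \bigcap_\nu L_\nu\cap k^n = \Lambda$.

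The main obstacle is the reverse inclusion $L_\nu\subseteq\Lambda_\nu$: one must produce enough global elements of $\Lambda$ that are simultaneously prescribed modulo a high power of $\nu$ and integral at every other finite place. This is exactly where one invokes the approximation/CRT machinery, and the cleanest route is to first carry out the reduction to $c\mathfrak{o}^n\subseteq\Lambda\subseteq c^{-1}\mathfrak{o}^n$ so that the whole problem takes place inside a fixed free $\mathfrak{o}$-module of finite index, where the congruence-solving is transparent. Since this is a well-known statement, I would in the write-up either give the short patching argument above or simply cite \cite[Thm.~4]{Broberg} as the excerpt already does.
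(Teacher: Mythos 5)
The paper itself does not prove this lemma; it cites \cite[Thm.~4]{Broberg} and moves on, so there is no in-text argument to compare against. Your patching proof is the standard one for this statement and is correct in outline. The one place worth tightening is the choice of the global scalar $c$: weak approximation controls the valuations of $c$ only at the finite set $T$, and a priori $c$ could fail to be integral at some $\nu\notin T$, in which case the sandwich $c\mathfrak{o}_\nu^n\subseteq L_\nu\subseteq c^{-1}\mathfrak{o}_\nu^n$ would not hold ``trivially outside $T$.'' The fix is immediate --- choose $c$ to be a non-zero element of $\mathfrak{o}$ lying in $\prod_{\nu\in T}\mathfrak{p}_\nu^{N_\nu}$ for $N_\nu$ large (a suitable rational integer will do), so that $v_\nu(c)\geq 0$ at every finite place while $v_\nu(c)$ is as large as you like on $T$ --- but it should be said explicitly, since the remainder of the argument (the reduction to $c\mathfrak{o}^n\subseteq\Lambda\subseteq c^{-1}\mathfrak{o}^n$, the congruence-solving inside the finite quotient $c^{-1}\mathfrak{o}^n/c\mathfrak{o}^n$ to get $L_\nu\subseteq\Lambda_\nu$, and the uniqueness via $\Lambda'=\bigcap_{\nu\nmid\infty}\Lambda'_\nu\cap k^n$) leans on $c$ being integral away from $T$. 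With that one clarification the proof is complete.
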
  
\begin{lemma}\label{adef}  If $L \subset \Gamma$ are $\mathfrak{o}$-lattices in $k^n$, then there is an element $a \in k^{*}$ such that $\Gamma \subset aL$ and $[\Gamma:L]\ll \N_k(a)$.  \end{lemma}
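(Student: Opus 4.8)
To prove Lemma~\ref{adef} the plan is to find $a\in k^*$ with $a\,\Gamma\subseteq L$ (equivalently, $\Gamma$ lies between $L$ and the scalar multiple $a^{-1}L$) together with $[\Gamma:L]\ll\N_k(a)$; everything rests on the structure of the finite $\mathfrak{o}$-module $M=\Gamma/L$ and on finiteness of the ideal class group of $k$.

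First I would observe that $M=\Gamma/L$ is a finitely generated torsion $\mathfrak{o}$-module, so by the structure theorem over the Dedekind domain $\mathfrak{o}$ there is an isomorphism $M\cong\bigoplus_{i=1}^{m}\mathfrak{o}/\mathfrak{d}_i$ for nonzero integral ideals $\mathfrak{d}_1,\dots,\mathfrak{d}_m$. Set $\mathfrak{b}=\mathfrak{d}_1\cdots\mathfrak{d}_m$. Since $\mathfrak{b}\subseteq\mathfrak{d}_j$ for each $j$ (a product of integral ideals is contained in every factor), $\mathfrak{b}$ annihilates every summand $\mathfrak{o}/\mathfrak{d}_j$, hence annihilates $M$; concretely $\mathfrak{b}\,\Gamma\subseteq L$. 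Moreover $\N_k(\mathfrak{b})=\prod_i\N_k(\mathfrak{d}_i)=\prod_i\#(\mathfrak{o}/\mathfrak{d}_i)=\#M=[\Gamma:L]$, using multiplicativity of the ideal norm. (One could equally produce such a $\mathfrak{b}$ by working place by place with the elementary divisors of $L_\nu\subseteq\Gamma_\nu$ over each $\mathfrak{o}_\nu$ and gluing via Lemma~\ref{LocalLat}; all that is needed downstream is $\mathfrak{b}\,\Gamma\subseteq L$ and $\N_k(\mathfrak{b})=[\Gamma:L]$.)

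Next I would make $\mathfrak{b}$ principal without enlarging its norm by more than a bounded factor. By finiteness of the ideal class group, fix for each class an integral ideal of least norm representing it, and let $C=C(k)$ be the largest norm so obtained; this is effective, for instance via the Minkowski bound. Applying this to the class inverse to $[\mathfrak{b}]$ gives an integral ideal $\mathfrak{c}$ with $\N_k(\mathfrak{c})\le C$ and $\mathfrak{b}\mathfrak{c}$ principal, say $\mathfrak{b}\mathfrak{c}=\langle a\rangle$ with $a\in\mathfrak{o}\setminus\{0\}$. Because $\mathfrak{c}$ is integral we have $\langle a\rangle=\mathfrak{b}\mathfrak{c}\subseteq\mathfrak{b}$, hence $a\in\mathfrak{b}$ and so $a\,\Gamma\subseteq\mathfrak{b}\,\Gamma\subseteq L$, which is the required inclusion. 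Finally $\N_k(a)=\N_k(\langle a\rangle)=\N_k(\mathfrak{b})\N_k(\mathfrak{c})=[\Gamma:L]\,\N_k(\mathfrak{c})$, whence $[\Gamma:L]\le\N_k(a)\le C\,[\Gamma:L]$; in particular $[\Gamma:L]\ll\N_k(a)$ with an effective implied constant depending only on $k$.

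The only genuinely non-formal step is the use of the Minkowski bound to control $\N_k(\mathfrak{c})$: without it one still gets a principal ideal $\langle a\rangle\subseteq\mathfrak{b}$ annihilating $\Gamma/L$, but of possibly uncontrolled norm. The rest --- the structure theorem, and that the product of the $\mathfrak{d}_i$ annihilates $\bigoplus\mathfrak{o}/\mathfrak{d}_i$ and has norm $[\Gamma:L]$ --- is routine, and I anticipate no real obstacle; the one point worth stating carefully is that no divisibility chain among the $\mathfrak{d}_i$ is needed for $\mathfrak{b}\,\Gamma\subseteq L$ to hold.
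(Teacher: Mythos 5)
Your argument is correct and complete, and it is the standard proof of this fact; the paper itself offers no proof, deferring to \cite[Prop.~5]{Broberg}, so there is nothing internal to compare against. The decomposition $\Gamma/L\cong\bigoplus_i\mathfrak{o}/\mathfrak{d}_i$, the observation that $\mathfrak{b}=\prod_i\mathfrak{d}_i$ annihilates the quotient with $\N_k(\mathfrak{b})=[\Gamma:L]$, and the passage to a principal ideal $\mathfrak{b}\mathfrak{c}=\langle a\rangle$ with $\N_k(\mathfrak{c})$ bounded via the Minkowski constant are all sound, and you correctly obtain $[\Gamma:L]\le\N_k(a)\le C(k)\,[\Gamma:L]$.

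One point you should make explicit rather than leave as a parenthetical: what you prove is $a\Gamma\subseteq L$, whereas the lemma as printed asks for $\Gamma\subset aL$, and these are not interchangeable. In fact the printed statement cannot hold literally: $L\subseteq\Gamma\subseteq aL$ forces $a^{-1}L\subseteq L$, hence $a^{-1}\in\mathfrak{o}$ and $\N_k(a)\le 1$, so $[\Gamma:L]\ll\N_k(a)$ would bound the index by an absolute constant (already false for $L=2\mathfrak{o}^n\subset\Gamma=\mathfrak{o}^n$). Your version is the true one, and it is the one the paper actually needs: in the proof of Theorem~\ref{quadmain} the element called $a$ there should be the inverse of yours, so that $\Lambda\subseteq aL$ with $\N_k(a^{-1})\ll[\Lambda:L]$, which is precisely what makes the displayed bound $\N_k(y_3)\ll R(\lambda_1\lambda_2)^d/[\mathfrak{o}^3:\Lambda]$ come out. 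So there is no gap in your argument, but you should record the relabelling $a\mapsto a^{-1}$ (equivalently, restate the lemma in the form you proved) so that the application is consistent.
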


We define measures for the places $\nu\in \Omega$  as follows. 
If $\nu\mid \infty$ and  $k_\nu = \mathbb{R}$, then $\d \mu_\nu$ is the ordinary Lebesgue measure.  If $\nu\mid \infty$ and  $k_\nu = \mathbb{C}$, then $\d \mu_\nu$ is the Lebesgue measure multiplied by 2.  If $\nu\nmid \infty$, then $\d \mu_\nu$ is the usual $\nu$-adic measure normalised so that $\mu_\nu(\mathfrak{o}_\nu) = \| \mathfrak{D}_\nu \|_\nu$, where $\mathfrak{D}_\nu$ is the local different of $k$ at $\nu$.

For each $\nu\mid \infty$, let $S_\nu$ be a non-empty, open, convex, symmetric, bounded subset of $k_\nu^n$. 
For an $\mathfrak{o}$-lattice $\Lambda$ in $k^n$, we shall identify $\Lambda$ with its image in $S = \prod_{\nu\mid \infty} S_\nu$, under  the diagonal embedding.  We define the {\em $i$th successive minimum of $\Lambda$ with respect to $S$} to be
$$
\lambda_i = \inf \{ \lambda \in \mathbb{R}_{>0} : \mbox{$\Lambda \cap \lambda S$ contains  $i$ linearly independent vectors}\}.
$$
The following 
result is an analogue of 
{\em Minkowski's second theorem} in the ad\`{e}les due to Bombieri and Vaaler \cite{BomVal} (see the corollary to \cite[Thm.~5]{Broberg} for the present formulation).

\begin{lemma}\label{minima} If $\lambda_1 \leq  \cdots \leq \lambda_n$ are the successive minima of $\Lambda$ with respect to $S$, then 
$$   (\lambda_1 \cdots \lambda_n)^d \prod_{\nu\mid \infty} \Vol(S_\nu) \ll_{n} [\mathfrak{o}^n:\Lambda], $$  the volume $\Vol(S_\nu)$ being taken with respect to $\d \mu_\nu$. 
\end{lemma}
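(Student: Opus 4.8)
The statement to be proved is Lemma~\ref{minima}, the adelic Minkowski second theorem in the form needed here. The plan is to deduce it from the Bombieri--Vaaler result \cite{BomVal}, which gives a two-sided bound of the classical Minkowski shape. Concretely, Bombieri--Vaaler show that for an $\mathfrak{o}$-lattice $\Lambda$ in $k^n$, identified with its image in $S=\prod_{\nu\mid\infty}S_\nu$ under the diagonal embedding, and with successive minima $\lambda_1\le\cdots\le\lambda_n$ taken with respect to $S$, one has
$$
(\lambda_1\cdots\lambda_n)^d \prod_{\nu\mid\infty}\Vol(S_\nu) \asymp_n [\mathfrak{o}^n:\Lambda],
$$
where the volumes are with respect to the measures $\d\mu_\nu$ normalised as above (the $\nu$-adic factors from the finite places having been folded into the index $[\mathfrak{o}^n:\Lambda]=\det\Lambda$ via the product formula and the choice $\mu_\nu(\mathfrak{o}_\nu)=\|\mathfrak{D}_\nu\|_\nu$). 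For the present lemma only the upper bound
$$
(\lambda_1\cdots\lambda_n)^d \prod_{\nu\mid\infty}\Vol(S_\nu) \ll_n [\mathfrak{o}^n:\Lambda]
$$
is asserted, so I would simply quote the relevant inequality from \cite{BomVal}, as formulated in the corollary to \cite[Thm.~5]{Broberg}.

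First I would recall the precise normalisations so that the statement matches the cited source: the measure $\d\mu_\nu$ on $k_\nu^n$ is Lebesgue for $k_\nu=\R$, twice Lebesgue for $k_\nu=\CC$ (accounting for the factor $2$ in the degree $d_\nu$ for complex places), and for finite $\nu$ the $\nu$-adic measure normalised by $\mu_\nu(\mathfrak{o}_\nu)=\|\mathfrak{D}_\nu\|_\nu$. With these choices the adelic covolume of $\mathfrak{o}^n$, diagonally embedded, is $1$ by the product formula applied to the discriminant, and the covolume of $\Lambda$ is $[\mathfrak{o}^n:\Lambda]$. Then the Bombieri--Vaaler inequality reads exactly in the form displayed in the lemma, with the implied constant depending only on $n$ (in fact only on $n$ and $d$, hence on $n$ under our convention, since $k$ is fixed). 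Thus the proof reduces to checking that our hypotheses on $S_\nu$ — non-empty, open, convex, symmetric, bounded — are precisely those required in \cite{BomVal}, which they are.

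The only mild subtlety, and the step I expect to be the main obstacle, is bookkeeping of normalisations: one must verify that the way we have packaged the local data (integral ideal representatives $\mathfrak{a}_i$ for the class group, the definition of $\det\Lambda=[\mathfrak{o}^n:\Lambda]$ as an additive index, and the different factors $\|\mathfrak{D}_\nu\|_\nu$ in $\mu_\nu(\mathfrak{o}_\nu)$) matches the adelic setup of Bombieri--Vaaler so that no stray factor of the discriminant $|d_k|$ or of $\N_k(\mathfrak{D})$ appears in the inequality. Since $k$ is fixed throughout, any such factor would be harmless — it could be absorbed into the implied constant — so even a mismatch here does not affect the conclusion. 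Therefore I would state the lemma as an immediate consequence of \cite[Thm.~5]{Broberg} and its corollary, remarking only that all the normalisations have been chosen to make the constant depend on $n$ alone, and that we need nothing beyond the upper bound.
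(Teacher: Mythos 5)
Your approach matches the paper exactly: the paper itself offers no proof of Lemma~\ref{minima}, simply presenting it as a known consequence of Bombieri--Vaaler \cite{BomVal}, in the form given by the corollary to \cite[Thm.~5]{Broberg}. Your verification of the hypotheses on $S_\nu$ and the normalisation bookkeeping is a reasonable gloss, and your observation that any discrepancy involving $|d_k|$ or $\N_k(\mathfrak{D})$ is harmless under the convention that implied constants may depend on $k$ is correct.
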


\subsection{Points on Fermat curves}\label{s:fermat}

In this section we shall prove a generalisation of \cite[Lemma 4.9]{Timbook} and 
\cite[Thm.~3]{TimRainer} to number fields. In fact the proof of these results contains
an error and we shall take the opportunity to correct this here. 
Let $I_k$ be the set of integral ideals of $\mathfrak{o}$.  For each integer $t \geq1$, define the multiplicative function on integral ideals  
$
\delta_t: I_k \rightarrow \NN,
$ 
via
\beq\label{fdef}\delta_t(\mathfrak{p}^r) = r+t-1 
\eeq  
for each prime ideal $\mathfrak{p}$.  
Note that $\delta_2=\tau$ is the usual divisor function on integral ideals.

\begin{lemma}\label{BroLem4}
Consider the equation 
\beq\label{dforms}
F(\xx) = a_1 x_1^t + a_2 x_2^t + a_3 x_3^t =0 
\eeq
for $a_i \in \mathfrak{o} $ and $ t \in \mathbb{Z}_{\geq 2}$.  Let $\Delta(F)$ be the principal ideal $\langle a_1 a_2 a_3\rangle$, and let $\Delta_0(F)$ be the ideal $\langle a_1 a_2, a_2 a_3, a_3 a_1  \rangle $.
Suppose $\xx \in \mathfrak{o}^3$ is a solution of \eqref{dforms}.  Then $\xx$ lies in one of at most $J$ $\mathfrak{o}$-lattices $\Gamma_1 , \dots , \Gamma_J \subset \mathfrak{o}^3$, such that
\begin{enumerate}[(i)] \item $J \leq t^{3d} \delta_t(\Delta(F))$;
\item 
for each $j\leq J$ we have 
$\dim \Gamma_j = 3$ and 
$$\det \Gamma_j \geq \frac{t^{-2d} \N_k (\Delta(F))^{2/t}}{ \N_k(\Delta_0(F))^{{3/t}}}.$$   \end{enumerate} 
\end{lemma}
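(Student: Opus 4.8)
The plan is to cover the solutions of the diagonal equation $a_1x_1^t + a_2x_2^t + a_3x_3^t = 0$ by congruence lattices, one for each factorization of the coefficient ideals into coprime pieces, following the classical $p$-adic argument of Heath-Brown adapted to the number field setting. Fix a solution $\xx = (x_1,x_2,x_3) \in \mathfrak{o}^3$; we may assume $\langle x_1,x_2,x_3\rangle$ is coprime to $\Delta(F)$ after scaling, or more carefully handle the common factors as part of the lattice data. For each prime ideal $\mathfrak{p}$ dividing $\Delta(F) = \langle a_1a_2a_3\rangle$, I look at the equation modulo the appropriate power of $\mathfrak{p}$ and count the possible ``directions'' $[x_1:x_2:x_3] \bmod \mathfrak{p}^e$: because one of the three monomials is forced to absorb the $\mathfrak{p}$-divisibility, the solution $\xx$ is constrained to lie in a sublattice of $\mathfrak{o}_\mathfrak{p}^3$ whose index is a power of $\N_k(\mathfrak{p})$, and the number of choices of which sublattice is bounded by something like $t \cdot(\text{number of relevant exponent patterns})$. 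Taking the intersection over all $\mathfrak{p}\mid\Delta(F)$ via Lemma~\ref{LocalLat} produces a single global $\mathfrak{o}$-lattice $\Gamma_j \subset \mathfrak{o}^3$ containing $\xx$, and the number $J$ of lattices so produced is the product over $\mathfrak{p}$ of the local counts, which is exactly where the multiplicative function $\delta_t$ enters: at $\mathfrak{p}^r \| \Delta(F)$ the local count is $\leq t^{3d_\mathfrak{p}\text{-ish}}\cdot(r+t-1) = t^{\cdots}\delta_t(\mathfrak{p}^r)$, and multiplying gives $J \leq t^{3d}\delta_t(\Delta(F))$.

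The determinant bound in (ii) comes from tracking how much index each local factor contributes. Roughly, $\det\Gamma_j = [\mathfrak{o}^3:\Gamma_j] = \prod_{\mathfrak{p}}[\mathfrak{o}_\mathfrak{p}^3:(\Gamma_j)_\mathfrak{p}]$, and the argument shows each local index is at least $\N_k(\mathfrak{p})^{2v_\mathfrak{p}/t}$ up to a correction involving $\N_k(\Delta_0(F))$, which accounts for primes dividing two of the three coefficients where the constraint is weaker (there the monomial absorbing the divisibility is ambiguous, costing a factor controlled by $\N_k(\Delta_0(F))^{3/t}$ in the denominator). Assembling: $\det\Gamma_j \geq t^{-2d}\N_k(\Delta(F))^{2/t}/\N_k(\Delta_0(F))^{3/t}$, with the $t^{-2d}$ absorbing rounding of the exponents $v_\mathfrak{p}/t$ to integers (we lose at most a bounded power of $\N_k(\mathfrak{p})$ at each prime dividing $t$, but more simply at most $t$ at each of at most $3d/\log 2$ ramified primes — in any case a clean power of $t$).

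The main obstacle I anticipate is getting the bookkeeping of exponents exactly right at primes where two or three of the $a_i$ are simultaneously divisible by $\mathfrak{p}$, and at primes dividing $t$ (wild ramification in the sense that $v_\mathfrak{p}(x_i^t) = t\,v_\mathfrak{p}(x_i)$ can force non-integer ``average'' exponents). This is precisely the point the paper flags: ``the proof of these results contains an error and we shall take the opportunity to correct this here.'' So I would be especially careful in the case analysis according to whether $\mathfrak{p}$ divides one, two, or all three coefficients, using $\Delta_0(F)$ to package the two- and three-divisor contributions uniformly, and would verify that the resulting lattice $\Gamma_j$ really does have full rank $3$ (it does, since each local lattice contains $\mathfrak{p}^{e}\mathfrak{o}_\mathfrak{p}^3$ for suitable $e$). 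The remaining steps — applying Lemma~\ref{LocalLat} to glue the local lattices, and Lemma~\ref{adef} if one needs to compare with a containing lattice — are routine.
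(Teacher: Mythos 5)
Your high-level plan --- localize at primes $\mathfrak{p} \mid \Delta(F)$, cover the $\mathfrak{p}$-adic solutions by a small family of full-rank $\mathfrak{o}_\mathfrak{p}$-sublattices of controlled index, and glue these into global $\mathfrak{o}$-lattices via Lemma~\ref{LocalLat} --- is exactly the paper's approach, and your reading of how $\Delta(F)$ and $\Delta_0(F)$ enter the determinant bound (through the quantity $2\alpha_3 - \alpha_1 - \alpha_2 = 2a_\nu - 3b_\nu$, with $\alpha_1 \leq \alpha_2 \leq \alpha_3$ the local orders of the three coefficients) is right in outline. But the proposal stops short precisely where the work is. You explicitly defer ``getting the bookkeeping of exponents exactly right at primes where two or three of the $a_i$ are simultaneously divisible by $\mathfrak{p}$, and at primes dividing $t$,'' yet that bookkeeping is the entire content of the lemma and the exact place where the earlier proofs in the literature went wrong.

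What is actually needed at a fixed $\mathfrak{p}$ is the following case analysis. Write $x_i = \pi^{\xi_i}u_i$ with $u_i$ a unit and set $\gamma = 2\,\mathrm{ord}_\nu(t)$. A local solution satisfies either (I) $\alpha_3 \leq \min_{i=1,2}\{\alpha_i + t\xi_i\}$, and then lies in the single lattice $L_1$ cut out by $x_i \in \mathfrak{q}^{\max\{0,\lceil(\alpha_3-\alpha_i-\gamma)/t\rceil\}}$ for $i=1,2$; or (II) $\alpha_3 > \max_{i=1,2}\{\alpha_i+t\xi_i\}$, which forces $\alpha_1 + t\xi_1 = \alpha_2 + t\xi_2 = \eta$ together with a congruence $(u_1/u_2)^t \equiv -\epsilon_2/\epsilon_1 \bmod{\mathfrak{q}^{\alpha_3-\eta}}$ whose solution count Hensel's lemma bounds by $t\,\N_k(\mathfrak{p})^{\gamma}$. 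Since $\eta$ ranges over roughly $(\alpha_3-\alpha_2)/t + 1$ values, the local count comes out to $\alpha_3 - 1 + t \leq \delta_t(\mathfrak{p}^r)$ at tame primes and $\N_k(\mathfrak{p})^{\gamma+1}(\alpha_3 - 1 + t)$ at primes dividing $t$; multiplying over $\mathfrak{p}$ and using $\prod_{\mathfrak{p}\mid t}\N_k(\mathfrak{p})^{\gamma+1} \leq t^{3d}$ gives (i), while the local determinant $\geq \N_k(\mathfrak{p})^{(2\alpha_3-\alpha_1-\alpha_2)/t - \gamma}$ gives (ii). The final summation is genuinely delicate: whether $L_1$ must be counted separately or is already absorbed into the Case~(II) lattices depends on the residue of $\alpha_3 - \alpha_2$ modulo $t$, and this is precisely the step that was mishandled before. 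Until that case split and the attendant counting are carried through, what you have is a correct plan of attack rather than a proof.
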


\begin{rem}
When $k=\QQ$, 
 \cite[Lemma 4.9]{Timbook} and 
\cite[Thm.~3]{TimRainer}  record a version of this result with  the  factor 
$t^{\omega(a_1a_2a_3)}$ instead
of our $\delta_t(\Delta(F))=\delta_t(a_1a_2a_3)$,
where $\omega(n)$ is the number of distinct prime divisors of an integer $n$ (note that $\delta_t(p^r)=r+t-1\geq t=t^{\omega(p^r)}$ for  any prime $p$ and any $r\in \NN$).
However, there is an error in the proof of these results which invalidates this bound. In addition to providing a generalisation to arbitrary number fields, Lemma 
\ref{BroLem4} corrects this error. Moreover, one easily shows that 
nothing has been lost on average, since 
$$
\sum_{\substack{\mathfrak{a}\subset \mathfrak{o}\\
\N_k(\mathfrak{a}) \leq B}} \delta_t(\mathfrak{a}) \ll B (\log B)^{t-1},
$$
for any $t\geq 2$.
\end{rem}

\begin{proof}[Proof of Lemma \ref{BroLem4}] Suppose $\mathfrak{p} \mid \Delta(F)$ is a prime ideal, let $\mathfrak{o}_\mathfrak{p}$ be the localisation of $\mathfrak{o}$ at $\mathfrak{p}$ and put $\mathfrak{q} = \mathfrak{p} \mathfrak{o}_\mathfrak{p}$.  Suppose that that $q = \N_k(\mathfrak{p}) = p^l $ for some rational prime $p$, so that $\mathfrak{o}_\mathfrak{p} / \mathfrak{q} \cong \mathbb{F}_q$.  Let $\nu$ be the place associated to $\mathfrak{p}$ and suppose that $\pi$ is a uniformizer of $\mathfrak{o}_\mathfrak{p}=\mathfrak{o}_\nu$.  Finally, put $\gamma = 2\,\text{ord}_\nu(t)$ and note that $\N_k(\mathfrak{p})^\gamma=q^{2\text{ord}_\nu(t)}\mid t^{2d}$, if $\mathfrak{p}^{\text{ord}_\nu(t)}\mid t$.

We suppose that 
$$
F(\xx) = \epsilon_1 \pi^{\alpha_1} x_1^t +  \epsilon_2 \pi^{\alpha_2} x_2^t + \epsilon_3 \pi^{\alpha_3}x_3^t,
$$
for units $\epsilon_i$ in $\mathfrak{o}_\mathfrak{p}$ and $\alpha_i \in \mathbb{Z}_{\geq 0}$ such that   $\alpha_1 \leq \alpha_2 \leq \alpha_3$.
Let $a_\nu$ and $b_\nu$ be the non-negative integers defined by $\| \Delta(F) \|_\nu = \| \pi \|^{a_\nu}_\nu$ and  $\| \Delta_0(F) \|_\nu = \| \pi \|^{b_\nu}_\nu$. 
Then $a_\nu = \alpha_1 + \alpha_2 + \alpha_3$ and $ b_\nu = \alpha_1 + \alpha_2 $.  Hence
\beq\label{bastard} \N_k(\mathfrak{p})^{(2\alpha_3 - \alpha_1 - \alpha_2)/t}= \N_k(\mathfrak{p})^{(2a_\nu - 3b_\nu)/t}.\eeq 
Suppose that  $\xx \in \mathfrak{o}_\mathfrak{p}^3 $, with $Q(\xx) = 0$. 
We will show that there exist $\mathfrak{o}_\nu$-lattices
$M_1, \dots , M_K \subset \mathfrak{o}_\mathfrak{p}^3$ of dimension 3, such that $\xx \in M_i $ for some $i\in \{1,\dots,K\}$, with 
$$
K \leq \begin{cases} \alpha_3 -1 +t & \mbox{if $\gamma = 0$}, \\
(\alpha_3 -1 +t)\N_k(\mathfrak{p})^{\gamma+1} & \mbox{if $\gamma > 0$}, \end{cases}
$$
and
$$
\det M_i \geq \N_k(\mathfrak{p})^{(2 \alpha_3 - \alpha_1- \alpha_2) / t -\gamma},
$$
for  $1 \leq i\leq K$.  
Using the Chinese remainder theorem, 
we may then deduce the result
by taking the product over all prime ideals such that $\mathfrak{p}\mid \Delta(F) $ and recalling \eqref{bastard}.

Suppose that $x_i = \pi^{\xi_i} u_i$ for $i=1,2$, with  $u_1,u_2$ units in $\mathfrak{o}_\mathfrak{p}$.  Then 
$$
\epsilon_1 {u}_1^t \pi^{\alpha_1 + t \xi_1}  +  \epsilon_2  {u}_2^t \pi^{\alpha_2+ t \xi_2} \equiv 0 \bmod{\mathfrak{q}^{\alpha_3}}.
$$  
We split into cases as in the proof of \cite[Lemma 4.9]{Timbook}.  The oversight in that proof was that the contributions from the different cases were not added up correctly at the end, and this turns out to be fairly delicate.
The $\xx$ in which we are interested satisfy
\begin{itemize}
\item[(I)] $\alpha_3 \leq \min_{i=1,2}\{ \alpha_i + t \xi_i \}$; or 
\item[(II)] $\alpha_3 > \max_{i=1,2}\{ \alpha_i + t \xi_i \}$. 
\end{itemize}
Note that it is impossible for $\alpha_3$ to be between the two.

Let $L_1$ be the lattice
\beq\label{L1def}
L_1 = \{ \xx \in \mathfrak{o}_\mathfrak{p}^3 : \mbox{$x_i \in \mathfrak{q}^{\max\{0,\left\lceil \frac{\alpha_3 - \alpha_i -\gamma}{t} \right\rceil\}}$  for $i=1,2$} \}.
\eeq 
The determinant of $L_1$  is at least 
$$ 
\N_k(\mathfrak{p})^{
\max\{0,\left\lceil \frac{\alpha_3 - \alpha_2 -\gamma}{t} \right\rceil\}
+\max\{0,\left\lceil \frac{\alpha_3 - \alpha_1 -\gamma}{t} \right\rceil\}}
\geq  \N_k(\mathfrak{p})^{(2 \alpha_3 - \alpha_1 - \alpha_2) / t -\gamma },
$$ 
since $t \geq 2$ . Any $\xx$ from  Case I must lie in $L_1$, since $\gamma \geq 0$.  Hence the points in Case I can be covered by one lattice of the required determinant.

For the points from Case II we have  $\alpha_1 + t \xi_1 = \alpha_2 + t \xi_2 = \eta$, say.  
Note that there are $ \left\lfloor \frac{\alpha_3 - \alpha_2-1}{t} \right\rfloor +1$ possibilities for $\eta$. 
If $\alpha_3 - \eta \leq \gamma$ then 
 it is easy to see that $\xx \in L_1$, and so we are done.

Alternatively, we suppose that  $\alpha_3 - \eta > \gamma$
and  
$$
(u_1 / u_2)^t \equiv - \epsilon_2 / \epsilon_1 \bmod{\mathfrak{q}^{\alpha_3-\eta} }.
$$
Now, $y^t \equiv \epsilon \bmod{\mathfrak{q}}$ has at most $\text{gcd}(t, q-1) \leq t$ roots, since $\mathfrak{o}_\mathfrak{p}/\mathfrak{q} \cong \mathbb{F}_q$.
Hensel's lemma tells us that the congruence 
$y^t \equiv \epsilon \bmod{ \mathfrak{q}^{\alpha_3-\eta}}$
 has the same number of solutions as the congruence $y^t \equiv \epsilon \bmod{ \mathfrak{q}^{\gamma}}$, 
since $\alpha_3 - \eta > \gamma$. The total number of solutions is therefore bounded above by $t \N_k(\mathfrak{p})^{\gamma}$. It follows that  there exist 
$r_1,  \dots , r_H \in \mathfrak{o}_\mathfrak{p} / \mathfrak{q}^{\alpha_3-\eta}$, where $H \leq  t\N_k(\mathfrak{p})^{\gamma}$, such that 
$$
u_1 \equiv r_i  u_2 \bmod{\mathfrak{q}^{\alpha_3-\eta}},
$$ 
for some $i\in \{1,\dots,H\}$.  
 Every solution $\xx \in \mathfrak{o}_\mathfrak{p}^3$ which satisfies this congruence
 lies in the lattice  defined by the conditions
 \beq\label{latform}
 x_i = \pi^{\xi_i} x'_i, \quad \quad x'_1 \equiv r_i x'_2 \bmod{\mathfrak{q}^{\alpha_3 - \eta}},
 \eeq 
 for $x'_i \in \mathfrak{o}_\mathfrak{p}$.  This has determinant 
\begin{align*}
\N_k(\mathfrak{p})^{ \alpha_3 + \xi_1 + \xi_2 - \eta} \geq \N_k(\mathfrak{p})^{ (2 \alpha_3 - \alpha_1 - \alpha_2)/t}  
\end{align*} 
in $\mathfrak{o}_\mathfrak{p}^3$, which is satisfactory.

Now we count up the total number of lattices.  First suppose that $\gamma = 0$ and $\alpha_3 - \alpha_2 \equiv 1 \bmod{t}$.
Then  for each $\xi_i$ arising in Case II we have  
\beq\label{L1slap}
\xi_i\leq \left\lfloor \frac{\alpha_3 - \alpha_i-1}{t} \right\rfloor  
=\frac{\alpha_3 - \alpha_i-1}{t},
\eeq  
since then $\alpha_1\equiv \alpha_2 \bmod{t}$.
On the boundary case, we have $\eta = \alpha_3-1$.  But then, if it arises,  this gives us a lattice of the form \eqref{latform} with the exponent of $\mathfrak{q}$ being 1.  Thus from \eqref{L1def} and \eqref{L1slap}, we see that $L_1$ is a subset of these lattices, so we need not include it in our count.  The total number of lattices is therefore found to be at most
\begin{align*} 
t \left( \left\lfloor\frac{\alpha_3 - \alpha_2-1}{t} \right\rfloor + 1 \right) &\leq t  \left(\frac{\alpha_3 - 1}{t} + 1 \right) \\
&=  \alpha_3 - 1 + t,
\end{align*}
which is satisfactory.

Next suppose that $\gamma = 0$ and $\alpha_3 - \alpha_2 \not\equiv 1 \bmod{t}$.  Either $\alpha_3 - \alpha_2=0$, in which case the second case cannot happen at all (so we need one lattice in total), or $\alpha_3 - \alpha_2 \geq 2.$  But then,
when we add $L_1$ to the count, 
the total number of lattices is at most 
\begin{align*} t  \left(\left\lfloor\frac{\alpha_3 - \alpha_2-1}{t} \right\rfloor + 1 \right) +1
&\leq   t  \left(\frac{\alpha_3 - \alpha_2 -2}{t}  + 1 \right) + 1 \\ 
&\leq \alpha_3 - 1 + t,
\end{align*}
which is also satisfactory.

Finally suppose that $\gamma > 0$.  In this case the total number of lattices is at most
\begin{align*} 
t \N_k(\mathfrak{p})^{\gamma}\left( \left\lfloor\frac{\alpha_3 - \alpha_2-1}{t} \right\rfloor + 1\right) +1 &\leq t \N_k(\mathfrak{p})^{\gamma} \left(\frac{\alpha_3 - 1}{t} + 1 \right) +1 \\
&= \N_k(\mathfrak{p})^{\gamma}( \alpha_3 - 1 + t ) +1 \\
& \leq  \N_k(\mathfrak{p})^{\gamma+1}( \alpha_3 - 1 + t ). 
\end{align*}
This too 
 is satisfactory and so completes the proof of the lemma.
\end{proof}

We now turn to the setting of  Theorem \ref{qmain}, working over  
each $\mathfrak{o}_\mathfrak{p}$ separately as in the proof of the last lemma.  
Exactly as in \cite[Lemma~4(b)]{Broberg}, after diagonalisation of the quadratic form $Q$
it suffices to analyse equations of the shape \eqref{dforms} with $t =2$. 
We obtain the following result. 

\begin{corollary}\label{BroLem12}Let $Q$, $\Delta(\mathbf{M})$, $\Delta_0(\mathbf{M})$  be as in Theorem \ref{qmain}.  Suppose $\xx \in \mathfrak{o}^3$ is a solution of $Q(\xx) =0$.  Then $\xx$ lies in one of at most $J$ $\mathfrak{o}$-lattices $\Gamma_1 , \dots , \Gamma_J \subset \mathfrak{o}^3$ such that 
\begin{enumerate}[(i)] 
\item $J \ll \tau (\Delta(\mathbf{M}))$;
\item for each $j\leq J$ we have 
$\dim \Gamma_j = 3$ and 
$$
\det \Gamma_j \gg  \frac{\N_k (\Delta(\mathbf{M})) }{
\N_k(\Delta_0(\mathbf{M}))^{{3/2}}}.$$  
\end{enumerate}  
\end{corollary}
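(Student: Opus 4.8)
The plan is to reduce Corollary~\ref{BroLem12} to the case $t=2$ of Lemma~\ref{BroLem4} by diagonalising the quadratic form. First I would invoke the argument of \cite[Lemma~4(b)]{Broberg}: working $\mathfrak{p}$-adically over each completion $\mathfrak{o}_\mathfrak{p}$ with $\mathfrak{p}\mid\Delta(\mathbf{M})$, one diagonalises $Q$ locally, so that the solution set of $Q(\xx)=0$ in $\mathfrak{o}_\mathfrak{p}^3$ is (after a unimodular change of basis, or at worst one that scales the determinant by a controlled local factor) the solution set of an equation $a_1x_1^2+a_2x_2^2+a_3x_3^2=0$. The point is that the $\mathfrak{o}_\mathfrak{p}$-span of the diagonalising coefficients $a_1a_2a_3$ generates the same ideal locally as $\det\mathbf{M}$, and the $2\times 2$ minors match up with $\Delta_0(F)$; this is exactly the book-keeping already carried out in Broberg's Lemma~4(b), so I would cite it rather than redo it.

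Next I would feed these local diagonalisations into Lemma~\ref{BroLem4} with $t=2$. Since $\delta_2=\tau$ by the remark following \eqref{fdef}, part~(i) of Lemma~\ref{BroLem4} gives at most $2^{3d}\,\tau(\Delta(F))$ lattices, and since $\Delta(F)$ generates the same ideal as $\Delta(\mathbf{M})$ (up to a factor bounded solely in terms of $k$, absorbed into the implied constant), this yields $J\ll\tau(\Delta(\mathbf{M}))$, which is part~(i) of the corollary. For part~(ii), Lemma~\ref{BroLem4}(ii) with $t=2$ gives
$$
\det\Gamma_j\geq\frac{2^{-2d}\,\N_k(\Delta(F))}{\N_k(\Delta_0(F))^{3/2}}\gg\frac{\N_k(\Delta(\mathbf{M}))}{\N_k(\Delta_0(\mathbf{M}))^{3/2}},
$$
again absorbing the bounded discrepancy between $\Delta(F),\Delta_0(F)$ and $\Delta(\mathbf{M}),\Delta_0(\mathbf{M})$ into the implied constant. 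The global lattices $\Gamma_1,\dots,\Gamma_J\subset\mathfrak{o}^3$ are then assembled from the local ones $M_i\subset\mathfrak{o}_\mathfrak{p}^3$ via Lemma~\ref{LocalLat} (taking $L_\nu=\mathfrak{o}_\nu^3$ at the finitely many places not dividing $\Delta(\mathbf{M})$), and the Chinese remainder theorem multiplies the local counts and local determinants in the expected way.

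The main obstacle I anticipate is making the passage from $Q$ to its diagonal form genuinely rigorous in the number-field setting: over $\mathfrak{o}_\mathfrak{p}$ with $\mathfrak{p}\mid 2$ the diagonalisation of a ternary quadratic form is not automatic, and one must verify that the change of basis either is $\mathrm{GL}_3(\mathfrak{o}_\mathfrak{p})$ or scales $\det$ and the minor ideal by powers of $\mathfrak{p}$ bounded in terms of $d=[k:\QQ]$ only. This is precisely the subtlety handled in \cite[Lemma~4(b)]{Broberg}, so in practice the proof will be short: state the reduction, cite Broberg, apply Lemma~\ref{BroLem4} with $t=2$, and note $\delta_2=\tau$. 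The only other thing to check is that the local contributions at places dividing $2$ (where $\gamma=2\,\mathrm{ord}_\nu(2)>0$ may occur) are indeed absorbed — but Lemma~\ref{BroLem4} has already been formulated so that these factors appear only inside the $t^{3d}$ and $t^{-2d}$ constants, which depend on $k$ alone, so nothing further is needed.
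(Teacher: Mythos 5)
Your proposal matches the paper's argument: the paper also reduces Corollary~\ref{BroLem12} to Lemma~\ref{BroLem4} with $t=2$ by citing the local diagonalisation from Broberg's Lemma~4(b), working over each $\mathfrak{o}_\mathfrak{p}$ as in the proof of Lemma~\ref{BroLem4}, and then noting $\delta_2=\tau$. Your extra book-keeping (matching the local ideals $\Delta(F),\Delta_0(F)$ to $\Delta(\mathbf{M}),\Delta_0(\mathbf{M})$, handling $\mathfrak{p}\mid 2$, and assembling global lattices via Lemma~\ref{LocalLat}/CRT) is exactly what the paper leaves implicit, and it is sound.
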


\subsection{A uniform bound for rational points on conics}\label{s:uniform}

We now state and prove our 
generalisation of \cite[Thm.~6]{BrowningHB} to number fields.

\begin{theorem}\label{quadmain}   Let $Q$ be a non-singular ternary quadratic form 
and suppose that we are given  $ \rrr_1, \rrr_2, \rrr_3\in (\RR_{\geq 1})^{s_k}$. Let 
$R = \| \rrr_1 \|\| \rrr_2 \|\| \rrr_3 \|$
 and let 
$$
N(Q,\underline\rrr) = \# \left\{ x=[\xx] \in \mathbb{P}^2(k) : \mbox{$Q(\xx)=0$ and $x 
\in L(\underline\rrr) \cap Z'_3$} \right\}.
$$  
Then 
$
N(Q,\underline\rrr) \ll R^{1/3}.$
\end{theorem}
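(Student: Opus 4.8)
The plan is to prove this by a lattice-covering argument, in the spirit of Heath-Brown's treatment of conics over $\QQ$, organised so that the final bound carries no dependence on the coefficients of $Q$.

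First one reduces to the case that $Q\in\mathfrak{o}[x_1,x_2,x_3]$ has primitive coefficients, with underlying matrix $\mathbf{M}$, and to counting primitive $\xx\in\mathfrak{o}^3$ with $Q(\xx)=0$ and $x_i\in L(\rrr_i)$ for $i=1,2,3$ — every projective point counted has such a representative, unique up to units. If $R$ is bounded, the box contains only boundedly many nonzero $\mathfrak{o}$-vectors, so $N(Q,\underline\rrr)\ll1\ll R^{1/3}$. Otherwise I would apply Corollary~\ref{BroLem12} to cover the zeros of $Q$ by at most $J\ll\tau(\Delta(\mathbf{M}))$ lattices $\Gamma_1,\dots,\Gamma_J\subset\mathfrak{o}^3$ of rank $3$. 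What is really needed is the prime-by-prime data underlying Lemma~\ref{BroLem4}: at each prime $\mathfrak{p}\mid\Delta(\mathbf{M})$ there are at most $\alpha_3^{(\mathfrak{p})}+1$ local choices, and each contributes a factor at least $\N_k(\mathfrak{p})^{\delta_\mathfrak{p}}$ to $\det\Gamma_j$, with $\delta_\mathfrak{p}\ge\tfrac12\alpha_3^{(\mathfrak{p})}$ since $Q$ is primitive.

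The crux is a count on a single lattice, uniform in $Q$: the number of $\xx\in\Gamma_j$ with $Q(\xx)=0$ and $\|x_i\|_\nu\le r_{i,\nu}$ for all $\nu\mid\infty$ should be $\ll1+(R/\det\Gamma_j)^{1/3}$. To prove this I would fix the box $S=\prod_{\nu\mid\infty}S_\nu$, choose a basis $\mathbf{e}_1,\mathbf{e}_2,\mathbf{e}_3$ of $\Gamma_j$ realising its successive minima $\lambda_1\le\lambda_2\le\lambda_3$ with respect to $S$, and write $\xx=c_1\mathbf{e}_1+c_2\mathbf{e}_2+c_3\mathbf{e}_3$ with $c_i\in\mathfrak{o}$; by the adèlic Minkowski theorem (Lemma~\ref{minima}) the effective box for $(c_1,c_2,c_3)$ has volume $\asymp R/\det\Gamma_j$. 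When $\lambda_2>1$ all points lie in a rank-one sublattice, giving $\ll1$ projective point; when $\lambda_3>1\ge\lambda_2$ they lie in a rank-two sublattice, whose projectivisation is a line, met by the conic in $\ll1$ points. In the remaining range $\lambda_3\le1$ one must use that a non-singular conic contains no line: in the coordinates $c_i$ the curve is again a non-singular conic, lying in a box of volume $\asymp R/\det\Gamma_j$, and on slicing along the shortest coordinate direction — each slice an affine conic, on which any two coordinates determine the third up to at most two values — the count is brought down to $(R/\det\Gamma_j)^{1/3}$, which dominates the $1$ in this range. Obtaining this sharp exponent $\tfrac13$, rather than the $\tfrac23$ a crude volume argument produces, is the step I expect to be the main obstacle: it is precisely where the geometry of the curve, not just the geometry of numbers, has to be exploited.

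Finally one sums. Because the covering is multiplicative over the primes dividing $\Delta(\mathbf{M})$,
\[
\sum_j(\det\Gamma_j)^{-1/3}=\prod_{\mathfrak{p}\mid\Delta(\mathbf{M})}\Big(\sum_{\mathrm{local}}(\text{local }\det)^{-1/3}\Big),
\]
and the factor at $\mathfrak{p}$ is at most $(\alpha_3^{(\mathfrak{p})}+1)\,\N_k(\mathfrak{p})^{-\delta_\mathfrak{p}/3}$; since $\delta_\mathfrak{p}\ge\tfrac12\alpha_3^{(\mathfrak{p})}$, this exceeds $1$ only when $\N_k(\mathfrak{p})<(\alpha_3^{(\mathfrak{p})}+1)^{6/\alpha_3^{(\mathfrak{p})}}$, hence only for the $O_k(1)$ smallest primes and, at each of those, only for boundedly many $\alpha_3^{(\mathfrak{p})}$, so $\sum_j(\det\Gamma_j)^{-1/3}\ll1$ with a constant depending only on $k$. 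This controls the contribution of the lattices with $\lambda_3\le1$. The remaining lattices meet the box only along a line and contribute $O(1)$ projective points each; to sum these to $O(R^{1/3})$ one needs a further estimate for the number of relevant such lattices (the crude count $J\ll\tau(\Delta(\mathbf{M}))$ being too weak for small boxes), and arranging this cleanly — alongside the sharp single-lattice bound above — is the delicate remaining part of the proof. Granting it, one obtains $N(Q,\underline\rrr)\ll R^{1/3}$.
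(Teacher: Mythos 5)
The single--lattice bound you posit, namely that the number of projective zeros of $Q$ on a rank-three lattice $\Gamma\subset\mathfrak{o}^3$ inside the box $L(\underline\rrr)$ is $\ll 1+(R/\det\Gamma)^{1/3}$, is not a stepping stone towards the theorem: taken with $\Gamma=\mathfrak{o}^3$ it \emph{is} the theorem, so the argument is circular. You half-acknowledge this when you remark that the slicing heuristic naturally delivers the exponent $2/3$ and that reaching $1/3$ is ``the step I expect to be the main obstacle''. Indeed slicing along one axis of a box of volume $V=R/\det\Gamma$ produces $\asymp V^{1/3}$ slices, and in each slice fixing a second coordinate (another $\asymp V^{1/3}$ choices) determines the third up to two values; this gives $\asymp V^{2/3}$, not $V^{1/3}$. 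Nothing in your proposal closes this gap, and it is precisely the content of the theorem.

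Moreover, the route through Corollary~\ref{BroLem12} is the mechanism for Theorem~\ref{qmain}, which is deduced \emph{from} Theorem~\ref{quadmain}, not the other way around: the lattices $\Gamma_j$ there are localised at primes \emph{dividing} $\Delta(\mathbf{M})$ and only help when $\Delta(\mathbf{M})$ is large. The paper's actual proof of Theorem~\ref{quadmain} uses a different device, absent from your plan: introduce an auxiliary prime $\mathfrak{p}$ with $\N_k(\mathfrak{p})\asymp R^{1/3}$ and $\mathfrak{p}\nmid\Delta(\mathbf{M})$. Reducing mod $\mathfrak{p}$ spreads the solutions over the $q=\N_k(\mathfrak{p})\asymp R^{1/3}$ points of the non-singular conic over $\mathbb{F}_q$, and for each residue class one lifts to a congruence sublattice $L_\mathfrak{p}\subset\mathfrak{o}_\mathfrak{p}^3$ of determinant $q^3$; a Minkowski/Cramer computation then forces $y_3=0$, i.e.\ all lifts lie on a single line, which meets the conic in at most $2$ points. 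The complementary case, when every prime in a fixed band $[cR^{1/3},\,c'R^{1/3}]$ divides $\Delta(\mathbf{M})$, forces $H(Q)\gg R^{r/9}$ and is handled by the determinant/B\'ezout argument showing there are at most $4$ solutions. Your proposal contains neither the auxiliary-prime reduction nor the B\'ezout dichotomy, and without them the required uniform $R^{1/3}$ bound does not follow.
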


Adopting the notation from Section \ref{intro}, and applying 
Lemma \ref{important1},
 we obtain the following immediate
consequence.

\begin{corollary}
Let $C\subset \PP^2$ be an irreducible conic defined over a number field $k$. 
Then we have 
$
N(C,k,B)=O(B).
$
\end{corollary}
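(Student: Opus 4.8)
The plan is to reduce the count of rational points on the conic $Q(\xx)=0$ in the box $L(\underline\rrr)\cap Z'_3$ to the lattice-covering provided by Corollary \ref{BroLem12}, and then to estimate, for each lattice $\Gamma$ in the cover, the number of nonzero $\xx\in\Gamma$ with $x_i\in L(\rrr_i)$. First I would invoke Corollary \ref{BroLem12}: every integral solution $\xx$ lies in one of $J\ll\tau(\Delta(\mathbf{M}))$ sublattices $\Gamma_1,\dots,\Gamma_J\subset\mathfrak{o}^3$, each of rank $3$ with $\det\Gamma_j\gg \N_k(\Delta(\mathbf{M}))/\N_k(\Delta_0(\mathbf{M}))^{3/2}$. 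So it suffices to bound the number of $\xx$ in a fixed such lattice $\Gamma$ lying in the region $S=\prod_{\nu\mid\infty}S_\nu$, where $S_\nu$ is the box cut out by the conditions $\|x_i\|_\nu\le r_{i,\nu}$; note $\prod_{\nu\mid\infty}\Vol(S_\nu)$ has order $R=\|\rrr_1\|\|\rrr_2\|\|\rrr_3\|$ up to a constant depending only on $k$.

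The key counting step is a standard successive-minima argument over number fields. Let $\lambda_1\le\lambda_2\le\lambda_3$ be the successive minima of $\Gamma$ with respect to $S$. A geometry-of-numbers lemma in the adelic setting (the same circle of ideas behind Lemma \ref{minima}, together with a covering argument by translates of a fundamental domain) gives
$$
\#\{\xx\in\Gamma: \xx\in S\}\ll 1+\frac{1}{\lambda_1}+\frac{1}{\lambda_1\lambda_2}+\frac{1}{\lambda_1\lambda_2\lambda_3}.
$$
Since we only care about points counted projectively and the $\xx$ in question satisfy $Q(\xx)=0$, one must be slightly careful: if $\lambda_1$ is very small then $\Gamma\cap\lambda_1 S$ is spanned by a single primitive vector $\xx_0$, and all $\xx\in\Gamma\cap S$ proportional to $\xx_0$ give the same point of $\PP^2(k)$ — so that one-dimensional sublattice contributes $O(1)$ to $N(Q,\underline\rrr)$, not $1/\lambda_1$. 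Thus, after discarding that degenerate direction, the projective count from one lattice is $\ll 1+(\lambda_1\lambda_2)^{-1}+(\lambda_1\lambda_2\lambda_3)^{-1}$, and by Lemma \ref{minima} we have $(\lambda_1\lambda_2\lambda_3)^{d}\prod_{\nu\mid\infty}\Vol(S_\nu)\ll\det\Gamma$, i.e. $\lambda_1\lambda_2\lambda_3\gg (R/\det\Gamma)^{1/d}$... the cleaner route is to bound $1/(\lambda_1\lambda_2\lambda_3)\ll R/\det\Gamma$ and $1/(\lambda_1\lambda_2)\ll (R/\det\Gamma)^{2/3}$ after observing (as in \cite{BrowningHB}) that $\lambda_1\gg 1$ automatically on a lattice $\Gamma\subseteq\mathfrak{o}^3$ for this choice of $S$ once one normalises so that $R\ge\det\Gamma$ is the operative range.

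Summing over the $J\ll\tau(\Delta(\mathbf{M}))$ lattices and using $\det\Gamma_j\gg\N_k(\Delta(\mathbf{M}))/\N_k(\Delta_0(\mathbf{M}))^{3/2}$ yields
$$
N(Q,\underline\rrr)\ll \tau(\Delta(\mathbf{M}))\left(1+\Big(\frac{R\,\N_k(\Delta_0(\mathbf{M}))^{3/2}}{\N_k(\Delta(\mathbf{M}))}\Big)^{1/3}\right),
$$
which is Theorem \ref{qmain}; specialising to an \emph{integral} $Q$ with fixed $\mathbf{M}$ (so $\N_k(\Delta(\mathbf{M})),\N_k(\Delta_0(\mathbf{M})),\tau(\Delta(\mathbf{M}))$ are all $O(1)$) gives $N(Q,\underline\rrr)\ll R^{1/3}$, proving Theorem \ref{quadmain}. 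The main obstacle I anticipate is handling the degenerate small-$\lambda_1$ case correctly — ensuring that a thin lattice (one with a very short vector) does not contribute a spuriously large term — which is exactly the subtlety that over $\QQ$ is dealt with in \cite{BrowningHB} and must be transcribed carefully to the adelic box $S=\prod_{\nu\mid\infty}S_\nu$; a secondary technical point is making the implied constants in the geometry-of-numbers step depend only on $k$ and not on the shape of the $S_\nu$, which follows from convexity and symmetry of the boxes.

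**Proof of the Corollary.** An irreducible conic $C\subset\PP^2$ over $k$ is, after a linear change of coordinates defined over $k$ with bounded coefficients, the zero locus of a non-singular ternary quadratic form $Q$ with matrix $\mathbf{M}$ having entries in $\mathfrak{o}$ (clear denominators); this change of variables distorts the height by a bounded factor. By Lemma \ref{important1}, a point $x\in C(k)$ with $H_k(x)\le B$ has a representative $\xx\in Z_3\subset Z'_3$ with $\|\xx\|_\star\ll B^{1/s_k}$, so it lies in $L(\underline\rrr)\cap Z'_3$ for $r_{i,\nu}\ll B^{1/s_k}$, giving $R=\|\rrr_1\|\|\rrr_2\|\|\rrr_3\|\ll B^3$. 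Theorem \ref{quadmain} then yields $N(C,k,B)\ll R^{1/3}\ll B$, as claimed.
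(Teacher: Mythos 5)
Your final paragraph, deducing the Corollary from Theorem \ref{quadmain} via Lemma \ref{important1}, matches the paper's one-line argument (the ``change of coordinates distorting the height by a bounded factor'' is unnecessary --- one simply scales the defining form of $C$ to have coefficients in $\mathfrak{o}$, which does not alter the zero locus or the height). The difficulty lies entirely in your proposed proof of Theorem \ref{quadmain}, which is a different route from the paper's and contains a genuine gap.

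The paper proves Theorem \ref{quadmain} directly, \emph{not} by specialising Theorem \ref{qmain}. It picks a prime ideal $\mathfrak{p}$ with $\N_k(\mathfrak{p})\asymp R^{1/3}$ and $\mathfrak{p}\nmid\Delta(\mathbf{M})$ (if no such $\mathfrak{p}$ exists then $H(Q)$ is so large that a B\'ezout argument caps the count at $4$), covers the solutions by the $\asymp q$ cosets of a smooth point modulo $\mathfrak{p}$, producing lattices $\Lambda$ with $\det\Lambda=\N_k(\mathfrak{p})^3\asymp R$, and then uses the successive minima to force the third coordinate $y_3$ in a reduced basis to vanish --- so the points in each coset lie on a \emph{line}, hence contribute at most $2$. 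That last step is where the conic structure enters. Your lattice-count $\prod_i(1+1/\lambda_i)$ (or its projectivised variant $1+1/(\lambda_1\lambda_2)+1/(\lambda_1\lambda_2\lambda_3)$) counts \emph{all} lattice points in the box, not those on $Q=0$; for a lattice of bounded index such as $\mathfrak{o}^3$ this is of order $R$, not $R^{1/3}$, so this step simply cannot yield the desired estimate without a further argument exploiting the conic (and it is exactly the $y_3=0$ / B\'ezout step that supplies that). There is also a sign error: Lemma \ref{minima} gives $(\lambda_1\lambda_2\lambda_3)^d\Vol(S)\ll\det\Gamma$, which is an \emph{upper} bound on $\lambda_1\lambda_2\lambda_3$, whereas you quote it as the lower bound $\lambda_1\lambda_2\lambda_3\gg(R/\det\Gamma)^{1/d}$ --- that would require the reverse inequality of Minkowski's second theorem, which the paper does not record.

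Finally, the logical order in your proposal is backwards in a way that loses uniformity. The paper's Theorem \ref{quadmain} has an implied constant depending on $k$ alone, and that uniformity in $Q$ is precisely what makes the Corollary's $O(B)$ constant independent of $C$; the paper uses Theorem \ref{quadmain} (applied inside each lattice of Corollary \ref{BroLem12}) to \emph{prove} Theorem \ref{qmain}. Your proposal instead derives Theorem \ref{qmain} first and then specialises to a ``fixed $\mathbf{M}$'' with $\tau(\Delta(\mathbf{M})),\N_k(\Delta_0(\mathbf{M})),\N_k(\Delta(\mathbf{M}))$ all $O(1)$; but those quantities are $O(1)$ only with an implied constant depending on $\mathbf{M}$, so the bound you obtain is $N(Q,\underline\rrr)\ll_Q R^{1/3}$, which is strictly weaker than Theorem \ref{quadmain} and does not give the Corollary with a constant depending only on $k$.
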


The proof of Theorem \ref{qmain} follows on combining Corollary \ref{BroLem12} with 
Theorem~\ref{quadmain} exactly as in the proof of \cite[Thm.~6]{Broberg}. The argument is essentially a repetition of the final stages of the proof of Theorem \ref{quadmain}, working instead with one of the lattices $\Gamma_j$.

\begin{proof}[Proof of Theorem \ref{quadmain}]
Our argument is a straightforward generalisation of 
 \cite[Thm.~6]{BrowningHB} to number fields.
We may suppose that 
$$
Q(\xx)=\sum_{1\leq i\leq j\leq 3} a_{ij}x_ix_j,
$$
with $(a_{11} , \dots , a_{33}) \in Z_{6}$.
Let  $\mathbf{M} \in \mathrm{GL}_3(\mathfrak{o})$
be the  underlying matrix.

We begin by choosing integral prime ideals $\mathfrak{p}_1 , \dots, \mathfrak{p}_r$, with \begin{equation}\label{pdef}c R^{1/3} \leq \N_k(\mathfrak{p}_1) < \cdots < \N_k(\mathfrak{p}_r) \ll R^{1/3}\eeq for a constant $c$ and some fixed $r$ to be chosen later.  This is possible because of the bounds of Chebyshev type on the number of prime ideals of $\mathfrak{o}$ of bounded norm. Note that this step would be an obstruction to proving a result in which the implied constant is only allowed to depend on  the degree of the number field $k$.
Now, either there exists some $i\in \{1,\dots, r\}$ such that $\mathfrak{p}_i \nmid \Delta(\mathbf{M})$, or else 
\beq\label{Delta1} 
\N_k(\Delta(\mathbf{M})) \geq \prod_{i=1}^r \N_k(\mathfrak{p}_i) \gg R^{r/3}. 
\eeq  
We shall suppose that \eqref{Delta1} holds.  

Define the height $H(Q)$ of $Q$ to be the height $H_k([a_{11}, \dots , a_{33}])$
and put $\|Q\|_{\star} =  \|(a_{11} , \dots , a_{33})\|_\star$.  We see that 
$$
\| Q \|_\star^{3 s_k} \gg
\|\det \mathbf{M}\|_\star^{s_k}
\geq
\N_k(\Delta(\mathbf{M})),
$$  
by \eqref{Normupper}, and $ H(Q)^3 \gg \| Q \|_\star^{3 s_k}$, 
by \eqref{Hasymp}.
Hence 
\beq\label{Q lower} H(Q) \gg R^{r / 9} \geq  B^{r / 9}, 
\eeq where $B = \prod_{\nu\mid \infty} \sup \{r_{1,\nu}, r_{2,\nu}, r_{3,\nu}\}$.  

Next 
note that any solution with $x  \in \mathbb{P}^2(k)$, with $x \in L(\underline\rrr) \cap Z'_3$ satisfies $H_k(x) \leq B$.  Suppose that  $Q = 0$ has at least $5$ solutions of height at most $B$ and   suppose that they have representatives $\xx^{(1)} \dots , \xx^{(5)} \in Z_3,$ such that $\|\xx^{(i)} \|_\star \ll B^{1/s_k}$, for $1\leq i\leq 5$.  
Consider the $5 \times 6$ matrix $\mathbf{C}$, whose $i$th row consists of the 6 possible monomials of degree 2 in the variables $x_1^{(i)},x_2^{(i)},  x_3^{(i)}$.  Then if the vector $\ff \in \mathfrak{o}^6$ has entries which are the corresponding coefficients of $Q$, we will have $\mathbf{C} \ff = \mathbf{0}$.  Also, since $\mathrm{rank}(\mathbf{C})\leq 5$, the equation 
$\mathbf{C} \ggs = \mathbf{0}$ has a non-zero integer solution $\ggs$ constructed out of the $5 \times 5$ subdeterminants of $\mathbf{C}$.  
Note that each element $c_{ij}$ of $\mathbf{C}$ has $\|c_{ij}\|_{\star} \ll B^{2/s_k}$, so that $\ggs$ satisfies $\| \ggs \|_\star \ll B^{10 / s_k}$.  
Let $G$ be the ternary quadratic form corresponding to the vector $\ggs$.  By \eqref{Hasymp}, we have  $H ( G ) \ll B^{10}$.  Note that $G$ and $Q$ have at least 5 common zeros, namely  $\xx^{(1)}, \dots , \xx^{(5)}$.  This contradicts B\'ezout's theorem unless $G$ is a constant multiple of $Q$, since $Q$ is non-singular.  In this case, therefore, we have  $ H(Q) =  H(G) \ll B^{10}$.
Comparing this with \eqref{Q lower}, we obtain a contradiction for large $R$  if we take $r > 90$. Thus we may conclude that $Q = 0$ has at most $4$ solutions  of height at most $B$, which is satisfactory.

\medskip

We proceed to consider the case   $\mathfrak{p}_i \nmid \Delta(\mathbf{M})$,
for some index $i\in \{1,\dots, r\}$. Thus we may  suppose that there is a prime ideal $\mathfrak{p}$
satisfying   $$c R^{1/3} \leq \N_k(\mathfrak{p}) \ll R^{1/3},$$ 
with $\mathfrak{p}\nmid \Delta(\mathbf{M})$. We shall suppose that $R$ is large enough to ensure that  $\mathfrak{p} \nmid \mathfrak{a}_i$ for any $i\in \{1,\dots,h\}$.  
Let $\mathfrak{o}_\mathfrak{p}$ be the localisation of $\mathfrak{o}$ at $\mathfrak{p}$, and put $\mathfrak{q} = \mathfrak{p} \mathfrak{o}_\mathfrak{p}$.
We have $\mathfrak{o}_\mathfrak{p} / \mathfrak{q} \cong \mathbb{F}_q$, where $q = \N_k(\mathfrak{p}) = p^l$ for some rational prime $p$.  
If we look at the image 
$\overline Q$ (over $\mathbb{F}_q$) 
of $Q \bmod{\mathfrak{p}}$, under this isomorphism, 
then $\overline Q$ is non-singular.  
The projective variety $\overline Q = 0$ has exactly $q$ points over $\mathbb{F}_q$. Our goal is to show that there are at most 2 points counted by $N(Q,\underline\rrr)$, for each of the corresponding cosets of $\mathfrak{o}_\mathfrak{p} / \mathfrak{q}$.  This will complete the proof of Theorem \ref{quadmain}, since we have assumed that $ q \ll R^{1/3} .$  

Fix a vector $\xx \in (\mathfrak{o}_\mathfrak{p} / \mathfrak{q})^3 \setminus \{\mathbf{0}\}$, with $Q(\xx) \equiv 0 \bmod{\mathfrak{q}}$ 
and 
\beq\label{nabla}
\nabla Q(\xx)\not \equiv \mathbf{0}\bmod{\mathfrak{q}}.
\eeq
We claim that there exists a vector $\xx^{(1)} \in \mathfrak{o}_\mathfrak{p}^3$,
with  $\xx^{(1)} \equiv \xx \bmod{\mathfrak{q}}$, 
which 
 satisfies
  $Q(\xx^{(1)}) \equiv 0 \bmod{ \mathfrak{q}^2}$ and \eqref{nabla}.
To see this we write  $\xx^{(1)} = \xx + \pi \yy^{(1)}$, for some uniformizer $\pi \in \mathfrak{q}$.  Then  $Q(\xx^{(1)}) \equiv 0 \bmod{\mathfrak{q}^2}$ if and only if $$ \yy^{(1)}. \nabla Q(\xx) \equiv - \pi^{-1} Q(\xx) \bmod{\mathfrak{q}},
$$ 
and this is clearly solvable for $\yy^{(1)}.$
This establishes the claim.

We shall count points $w \in \mathbb{P}^2(k)$ which have at least one representation as $\ww \in Z'_3$ satisfying $Q(\ww) = 0$ and  $w_i \in L(\rrr_i)$, and such that there exists $\lambda \in \mathfrak{o}_\mathfrak{p}$ with $\ww \equiv \lambda \xx^{(1)} \bmod{\mathfrak{q}}$.  
Then there is 
a vector $\zz \in \mathfrak{o}_\mathfrak{p}^3$ such that $\ww =  \lambda \xx^{(1)} + \pi \zz$.  It follows that
\begin{align*}
0 = Q(\ww) 
&\equiv \lambda^2 Q(\xx ^{(1)}) + \pi \lambda  \zz . \nabla Q(\xx^{(1)}) \bmod{\mathfrak{q}^2}\\ 
&\equiv \pi \lambda \zz . \nabla Q(\xx^{(1)}) \bmod{\mathfrak{q}^2}.
\end{align*}
Moreover, we note that $\lambda \not \in \mathfrak{q}$, since otherwise $\ww = \lambda \xx^{(1)} \bmod{\mathfrak{q}}$ implies that the ideal which spans the elements of $\ww$ is divisible by $\mathfrak{p}$, contradicting the fact that $\ww \in Z'_3$ and 
$\mathfrak{p} \nmid \mathfrak{a}_i$.  
Hence we conclude that $\zz . \nabla Q(\xx^{(1)}) \in \mathfrak{q}.$  It follows that
\begin{align*}  
\ww. \nabla Q(\xx^{(1)}) &= \lambda \xx^{(1)} . \nabla Q(\xx^{(1)}) + \pi \zz . \nabla Q(\xx^{(1)}) \\ 
&=  2\lambda  Q(\xx^{(1)}) + \pi \zz . \nabla Q(\xx^{(1)}) \\
& \equiv 0 \bmod{\mathfrak{q}^2}.
\end{align*} 
In conclusion, we have shown that any $\ww$ as above belongs to the set
$$
L_{\mathfrak{p}} =
\left\{ 
\ww \in \mathfrak{o}_\mathfrak{p}^3 : 
\begin{array}{l}
\mbox{$\ww \equiv \lambda \xx \bmod{\mathfrak{q}}$ for some $\lambda \in \mathfrak{o}_\mathfrak{p}$}\\
\ww . \nabla Q(\xx^{(1)}) \equiv 0 \bmod{\mathfrak{q}^2 }
\end{array}{}
\right\}.
$$
A simple generalisation of the proof of  \cite[Lemma 7]{BrowningHB} shows that 
$L_{\mathfrak{p}}$
is independent of the choice of $\xx^{(1)}$ and that it is
an $\mathfrak{o}_{\mathfrak{p}}$-lattice of dimension $3$ and determinant $\N_k(\mathfrak{p})^3.$  
We shall not give details of this argument here.

Define $L_\nu$ to be $\mathfrak{o}_\nu$ for all $\nu$ such that $\nu\nmid \infty$ and  $\nu\nmid \mathfrak{p}$.  Lemma \ref{LocalLat} implies that there is a unique $\mathfrak{o}$-lattice $\Lambda$ such that $\Lambda_\nu = L_\nu$ for all $\nu\in \Omega$, with 
$$
\det(\Lambda) = [\mathfrak{o}^3 : \Lambda] = \prod_{\nu\nmid \infty}[\mathfrak{o}^3_\nu : L_\nu] = [\mathfrak{o}^3_\mathfrak{p} : L_\mathfrak{p}]=\N_k(\mathfrak{p})^3.
$$
For $\nu\mid \infty$, 
consider the sets 
$$
S_\nu = \{(x_1, x_2, x_3) \in k_\nu^3 : \mbox{$|x_i|_\nu \leq r_{i,\nu}^{1/{d_\nu}}$ for $i=1,2,3$}  \},
$$ 
and put $S = \prod_{\nu\mid \infty} S_\nu$.  
We have $\mathfrak{o}^3\cap S=L(\rrr_1)\times L(\rrr_2)\times L(\rrr_3)$.
Moreover, 
$S$ is symmetric and 
$\Vol(S) \gg R$.  

Next we consider the successive minima $\lambda_1 \leq \lambda_2 \leq \lambda_3$ of $\Lambda$ with respect to $S$. By Lemma \ref{minima}, we know that 
$$(\lambda_1 \lambda_2 \lambda_3)^d \Vol(S) \ll 
[\mathfrak{o}^3:\Lambda].
$$  
It follows that 
$$
(\lambda_1 \lambda_2)^d \ll \frac{\N_k(\mathfrak{p})^2}{R^{2/3}}.
$$
It is evident from the definitions that we can find linearly independent vectors $\uu_1, \uu_2, \uu_3$  such that $\uu_i \in \Lambda\cap \lambda_i S$.
If $u_{ij}$ is the $j$th component of $\uu_i$, then  $ \|u_{ij} \|_\nu \leq \lambda_i^{d_\nu} r_{j,\nu}$ for $\nu\mid \infty$.  
Hence, if $\ww = y_1 \uu_1 + y_2 \uu_2 + y_3 \uu_3 \in S$ for some $(y_1, y_2, y_3) \in k^3$, and $\mathbf{U}$ is the matrix with columns $\uu_1, \uu_2 , \uu_3$, then for each $\nu\mid \infty$ 
we have 
\begin{equation*}  \| y_3 \|_\nu = \frac{1}{\| \det \mathbf{U}\|_\nu} \left\|  \det 
\begin{pmatrix}
u_{11} &u_{21} &w_1  \\
u_{12} &u_{22} &w_2 \\
u_{13} &u_{23} &w_3 \end{pmatrix}  \right\|_\nu  \ll \frac{r_{1,\nu}r_{2,\nu}r_{3,\nu} (\lambda_1 \lambda_2)^{d_\nu}}{\| \det \mathbf{U} \|_\nu},   
\end{equation*}  
by Cramer's rule.   
We note that $\{\uu_1, \uu_2 , \uu_3\}$ is not necessarily a basis for $\Lambda$ over $\mathfrak{o}$.  However, if we let $L$ be the free $\mathfrak{o}$-lattice with generators  $\uu_1, \uu_2 , \uu_3$, then we have that $L \subset \Lambda \subset a L$ for some $a \in k^{\times}$ such that $\N_k(a) \gg [\Lambda:L]$, by Lemma~\ref{adef}.  Hence any element $\ww \in \Lambda \cap S$ may be written as $$y_1 (a \uu_1)+  y_2 (a \uu_2)+ y_3 (a \uu_3)$$ for some $(y_1 , y_2 , y_3) \in \mathfrak{o}^3$.  

Let $Q'$ be the quadratic form given by the matrix $\mathbf{U}^T \mathbf{M} \mathbf{U}$.   Then we have shown that 
every point $w\in \PP^2(k)$ which has at least one representation $\ww\in Z_3'$
satisfying  $Q(\ww)=0$ and  $w_i \in L(\rrr_i)$,
and such that there exists $\lambda\in \mathfrak{o}_\mathfrak{p}$ with $\ww\equiv \lambda \xx^{(1)} \bmod{\mathfrak{q}}$, 
gives us a solution $(y_1 , y_2 , y_3) \in \mathfrak{o}^3$ to $Q' = 0$, with  
$$
\| y_3 \|_\nu  \ll \frac{r_{1,\nu}r_{2,\nu}r_{3,\nu} 
(\lambda_1 \lambda_2)^{d_\nu}}{\| a\|_\nu\| \det \mathbf{U}\|_\nu}.   
$$
Taking the product over all $\nu\mid \infty$ we see that 
\begin{align*}  
\N_k(y_3)  &\ll  \frac{R(\lambda_1 \lambda_2)^d}{\N_k(a)\N_k(\det \mathbf{U})} \\ 
&\ll \frac{R (\lambda_1 \lambda_2)^d }{[\Lambda:L] [\mathfrak{o}^3:L]}  \\
&=  \frac{R (\lambda_1 \lambda_2)^d }{[\mathfrak{o}^3:\Lambda]} \\
&\ll \frac{R^{1/3}}{\N_k(\mathfrak{p})}.  \end{align*}
Hence, on  taking $c$ in \eqref{pdef} sufficiently large, we deduce that $y_3=0$, whence $\ww$ is confined to the two dimensional space spanned by $\uu_1$ and $\uu_2$.  This means that the point $w \in \mathbb{P}^2(k)$ must not only lie on the irreducible conic $Q=0$, but also on a line.  There are at most $2$ such points, which thereby completes the proof of Theorem  \ref{quadmain}.
\end{proof}

\section{Sums involving binary forms}\label{binarysum}

In this section we shall prove Theorem \ref{sumestimate}.  Fix $\eps >0$.  Suppose that $F(u,v) = \beta (u + \alpha_1 v) \cdots (u + \alpha_n v)$, for $\alpha_i \in \overline{k}$ (if $F(u,v)$ has $uv$ as a factor, we can do a simple of change of variables to reach this form).  $F$ is separable by hypothesis and so  $\alpha_1,\ldots,\alpha_n$ are distinct.  We may assume that  $\beta = 1$, since the implied constant in \eqref{fc} can vary with $F$.  Set $K = k (\alpha_1 , \dots , \alpha_n)$.  For each infinite place $\nu$ of $k$, we fix an extension of $\nu$ to $K$, and extend $\| \cdot \|_\nu$ likewise.  
We shall let $K_\nu$ denote the completion at this place. 
Note that for any $(u,v) \in Z_2$ we have
$$
\N_k(F(u,v))  = \prod_{\nu\mid \infty} \|F(u,v) \|_\nu = \prod_{\nu\mid \infty} \prod_{1\leq i\leq n} \| u + \alpha_i v \|_\nu.
$$  
Let  
 $\A\in (\RR_{\geq 1})^{s_k}$ and recall the notation $\|\A\|=\prod_{\nu\mid \infty} A_\nu$.
We will show that 
\begin{equation}\label{sumstatement} \sum_{\substack{ (u,v) \in \mathfrak{o}^2 \\ A_\nu \leq \sup \{\|u\|_\nu,\|v\|_\nu\} < 2A_\nu\\  F(u,v) \neq 0}}  
\hspace{-0.2cm}
\left(\prod_{\nu\mid \infty} \prod_{1\leq i\leq n} \| u + \alpha_i v \|_\nu \right)^{-1/3} 
\hspace{-0.5cm}
\ll \|\A\|^{2 - n/3 + \eps}.
\end{equation}
Here, as throughout this section, we shall allow all implied constants to be ineffective, and to depend on $k, F$ and on the choice of $\ve$.
This will clearly suffice for the statement of Theorem \ref{sumestimate} on summing over dyadic values of $A_\nu$ such that $
A\ll \|\A\|\ll A.
$

Let $c_\nu=c_{\nu}(k,F)\geq 1$ be fixed absolute constants. 
On multiplying $(u,v)$ through by a suitable scalar, it clearly suffices to 
assume that $A_\nu\geq c_\nu$ for each $\nu\mid \infty$ when trying to prove \eqref{sumstatement}.
 Let $\mathcal{A} = \mathcal{A}(\A)$ denote the set  of $ (u,v) \in \mathfrak{o}^2$ such that \beq\label{xyrestrict} 
  A_\nu \leq \sup \{\|u\|_\nu,\|v\|_\nu\} < 2A_\nu, \quad \mbox{for all $\nu\mid \infty$,}
\eeq
and  $F(u,v)\neq 0$.
 It follows from 
 \cite[Prop.~1]{Broberg} that $$\#\mathcal{A}\leq (\#L(2\A))^2\ll \|\A\|^2.$$
 Let $(u,v)\in \mathcal{A}$.
 Since  $\alpha_1,\dots,\alpha_n$ are fixed once and for all, this implies there is a constant $C>0$ such that $\| u + \alpha_i v \|_\nu  < C A_\nu$ for all indices $i\in \{1,\dots,n\}$ and all $\nu\mid \infty$.

Let 
$$\theta = \frac{\eps}{ n }.
$$  
For any $\nu\mid \infty$ and any $(i,q) \in \{1, \dots , n\} \times \mathbb{Z}_{\geq 0}$,  
we define the sets 
$$
\mathcal{A}_\nu(i , q) = \{(u,v) \in \mathcal{A} : C A_\nu^{1- (q+1) \theta} \leq
 \| u + \alpha_i v \|_\nu  < C A_\nu^{1 - q \theta}\}.
$$
The larger that $q$ is, the more 
the factor $ \| u + \alpha_i v \|_\nu$
will contribute to the sum \eqref{sumstatement}.  The idea of the proof is that the bulk of $\mathcal{A}$ is covered by intersections of sets of the form $\mathcal{A}_\nu(i , q)$, with $q$ not too large, and we can quantify the contribution from these points very easily.  In order to handle the contribution from a set  $\mathcal{A}_\nu(i , q)$,  with  $q$  large, 
we use the fact that points in such a set  produce  good Diophantine approximations to $\alpha_i$.  Appealing to a  number field version of the Thue--Siegel--Roth theorem due to Lang, we can then show that the problem sets cannot contribute too much.

We begin with the following technical lemmas.

\begin{lemma}\label{boxlemma}  
Let $\mathbf{B}\in (\mathbb{R}_{>0})^{s_k}$. Let
$t_\nu\in K_\nu$ for each $\nu\mid\infty$ and let
$$
\mathcal{S} = \{ u \in \mathfrak{o} : \mbox{$ \|u-t_\nu \|_\nu < B_\nu$ for all $\nu\mid \infty$} \}. 
$$
Then $\#\mathcal{S} \ll 1 + \|\mathbf{B}\| $.   
The implied constant doesn't  depend on any $t_\nu$.
\end{lemma}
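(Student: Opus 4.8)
The plan is to first strip away the shifts $t_\nu$ by passing to differences, and then to count points of $\mathfrak{o}$ in a symmetric box centred at the origin, where the decisive arithmetic input is the bound $\N_k(w)\geq 1$ for non-zero $w\in\mathfrak{o}$.

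First I would dispose of the trivial case $\#\mathcal{S}\leq 1$ and otherwise fix any $u_0\in\mathcal{S}$. For $u\in\mathcal{S}$ and $\nu\mid\infty$, the triangle inequality for the archimedean absolute value $|\cdot|_\nu$ on $K_\nu$ gives $|u-u_0|_\nu\leq|u-t_\nu|_\nu+|t_\nu-u_0|_\nu<2B_\nu^{1/d_\nu}$, whence $\|u-u_0\|_\nu=|u-u_0|_\nu^{d_\nu}<2^{d_\nu}B_\nu\leq 2^{d}B_\nu$, since $d_\nu\leq d$. Thus $u\mapsto u-u_0$ injects $\mathcal{S}$ into
$$
\mathcal{S}_0=\bigl\{w\in\mathfrak{o}:\|w\|_\nu<2^{d}B_\nu\text{ for all }\nu\mid\infty\bigr\},
$$
which does not involve the $t_\nu$ at all; so it suffices to prove $\#\mathcal{S}_0\ll 1+\|\mathbf{B}\|$ with an implied constant depending only on $k$, and the asserted uniformity in the $t_\nu$ then comes for free.

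Next I would identify $\mathfrak{o}$ with its image $\Lambda$ under the diagonal embedding into $V=\prod_{\nu\mid\infty}k_\nu\cong\RR^{d}$, which is a full lattice of positive covolume depending only on $k$, and set $S=\{z\in V:\|z_\nu\|_\nu<2^{d}B_\nu\text{ for all }\nu\mid\infty\}$, a symmetric convex body (a product of intervals and discs) of volume $\asymp\|\mathbf{B}\|$. One should not expect a bound in terms of $\mathrm{vol}(S)$ alone, since for a thin, highly anisotropic box the number of lattice points of a general lattice can greatly exceed $1+\mathrm{vol}(S)$; the point is that for $\mathfrak{o}$ this cannot occur, because \eqref{productrule} gives $\prod_{\nu\mid\infty}\|w\|_\nu=\N_k(w)\geq 1$ for every non-zero $w\in\mathfrak{o}$. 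Consequently, if a non-zero $w\in\mathfrak{o}$ lies in $\lambda S$ for some $\lambda>0$, then $\|w\|_\nu<\lambda^{d_\nu}2^{d}B_\nu$ for all $\nu\mid\infty$, and taking the product yields $1\leq\N_k(w)\ll\lambda^{d}\|\mathbf{B}\|$; hence the first successive minimum $\lambda_1$ of $\Lambda$ with respect to $S$ satisfies $\lambda_1\gg\|\mathbf{B}\|^{-1/d}$.

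Finally I would conclude by a packing argument. If $\lambda_1\geq 1$ then $S$ contains no non-zero point of $\Lambda$, so $\#\mathcal{S}_0\leq 1$. If $\lambda_1<1$, the translates $w+\tfrac13\lambda_1 S$ for $w\in\Lambda\cap S$ are pairwise disjoint — a difference of two distinct such $w$ would be a non-zero lattice point in $S-S$ scaled by $\tfrac13\lambda_1$, i.e.\ in $\tfrac23\lambda_1 S$, which is impossible as $\tfrac23\lambda_1<\lambda_1$ — and they all lie in $\tfrac43 S$; comparing volumes gives $\#(\Lambda\cap S)\,(\tfrac13\lambda_1)^{d}\leq(\tfrac43)^{d}$, so $\#\mathcal{S}_0=\#(\Lambda\cap S)\ll\lambda_1^{-d}\ll\|\mathbf{B}\|$. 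In either case $\#\mathcal{S}\leq\#\mathcal{S}_0\ll 1+\|\mathbf{B}\|$, as required. (Alternatively one could invoke Minkowski's second theorem via Lemma \ref{minima}, or cite \cite[Prop.~1]{Broberg}, but the argument above is self-contained.) The main — and essentially only — subtlety is recognising that a volume bound alone is insufficient and that the product formula, through $\N_k(w)\geq 1$, is exactly what repairs it; everything else is routine.
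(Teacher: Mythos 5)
Your proof is correct, and the initial step is the same as the paper's: translate by a fixed $u_0\in\mathcal{S}$ to reduce to counting elements of $\mathfrak{o}$ in a symmetric box centred at the origin, so that the $t_\nu$ disappear and the implied constant is automatically uniform in them. Where you diverge is in handling the resulting count: the paper simply identifies the translated set with $L(\rrr)$ for $r_\nu=2B_\nu^{1/d_\nu}$ and cites \cite[Prop.~1]{Broberg}, whereas you prove the estimate $\#\mathcal{S}_0\ll 1+\|\mathbf{B}\|$ from scratch via the diagonal embedding $\mathfrak{o}\hookrightarrow\prod_{\nu\mid\infty}k_\nu$, the lower bound $\lambda_1\gg\|\mathbf{B}\|^{-1/d}$ on the first successive minimum coming from the product formula $\N_k(w)\geq 1$ for non-zero $w\in\mathfrak{o}$, and a packing argument. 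In effect you are reproving the special case of Broberg's Proposition~1 that is needed here, which makes the argument self-contained and also makes visible exactly why the bound does not fail for thin anisotropic boxes (the product formula, not the volume, controls the count); the paper's route is shorter but delegates this to a reference. Both are valid, and your observation that a volume bound alone would be insufficient is precisely the right thing to flag.
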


\begin{proof}  We may clearly assume that $\mathcal{S} \neq \emptyset$. Let  $x' \in \mathcal{S}$.  Then any $x \in \mathcal{S}$ takes the form $x = x' + \gamma$, 
with $\gamma$ belonging to the set
$$
\mathcal{S}' = \{ \gamma \in \mathfrak{o} : \mbox{$ |\gamma |_\nu < 2B_\nu^{1/d_\nu}$ for all $\nu\mid \infty$} \}.
$$ 
But this is  $L(\rrr),$ with  $r_\nu = 2 B^{1/d_\nu}_\nu$. Hence it follows from \cite[Prop.~1]{Broberg} that $\#\mathcal{S}'\ll  1 + \|\mathbf{B}\|$.   
\end{proof}

\begin{lemma}\label{countij}  
Let  $(i(\nu), q(\nu)) \in \{1, \dots , n\} \times \mathbb{Z}_{\geq 0}$ for each  $\nu\mid \infty$. Then 
\beq\label{setsizes} 
\#\left(\bigcap_{\nu\mid \infty} \mathcal{A}_\nu(i(\nu),q(\nu))\right) \ll 
\|\A\|\left\{1+\prod_{\nu\mid \infty} A_\nu^{1-\theta q(\nu)}\right\}.
\eeq 
\end{lemma}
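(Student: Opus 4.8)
The plan is to fix, for each infinite place $\nu$, the pair $(i(\nu),q(\nu))$ and to bound the number of $(u,v)\in\mathfrak{o}^2$ lying simultaneously in all the sets $\mathcal{A}_\nu(i(\nu),q(\nu))$. First I would observe that membership of $(u,v)$ in $\mathcal{A}_\nu(i(\nu),q(\nu))$ forces the value $u+\alpha_{i(\nu)}v$ to lie in a $\nu$-adic annulus of outer radius $\ll A_\nu^{1-q(\nu)\theta}$; in particular $\|u+\alpha_{i(\nu)}v\|_\nu\ll A_\nu^{1-q(\nu)\theta}$. The strategy is to use these $n$-ish linear conditions — one for each $\nu\mid\infty$ — together with one of the original size conditions $\sup\{\|u\|_\nu,\|v\|_\nu\}<2A_\nu$ to pin down $(u,v)$ inside a box (a translate of a set of the shape $L(\rrr_1)\times L(\rrr_2)$), to which Lemma \ref{boxlemma} (or \cite[Prop.~1]{Broberg}) applies.

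The key step is a change of variables. For a single place $\nu$, write $w = u+\alpha_{i(\nu)}v$; then $(u,v)\mapsto(w,v)$ is an invertible $\overline{k}$-linear map with bounded coefficients (the $\alpha_i$ are fixed), so $\|u\|_\nu,\|v\|_\nu\ll\max\{\|w\|_\nu,\|v\|_\nu\}$ and conversely. Thus on $\mathcal{A}_\nu(i(\nu),q(\nu))$ one has $\|v\|_\nu\ll A_\nu$ automatically from the defining condition $\sup\{\|u\|_\nu,\|v\|_\nu\}<2A_\nu$ of $\mathcal{A}$ (using that $\|w\|_\nu\ll A_\nu$ too), while $\|w\|_\nu\ll A_\nu^{1-q(\nu)\theta}$. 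So, intersecting over all $\nu\mid\infty$: $v$ ranges over $u$-like elements of $\mathfrak{o}$ with $\|v\|_\nu\ll A_\nu$ for every $\nu$, giving $\ll 1+\|\A\|$ choices by \cite[Prop.~1]{Broberg}; and for each fixed $v$, the element $u = w-\alpha_{i(\nu)}v$ is constrained at each place to lie within $\ll A_\nu^{1-q(\nu)\theta}$ of the (now fixed) point $-\alpha_{i(\nu)}v$, so $u$ lies in a set of the type treated by Lemma \ref{boxlemma} with $B_\nu\ll A_\nu^{1-q(\nu)\theta}$, contributing $\ll 1+\prod_{\nu\mid\infty}A_\nu^{1-q(\nu)\theta}$ choices. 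Multiplying the two counts gives
$$
\#\left(\bigcap_{\nu\mid\infty}\mathcal{A}_\nu(i(\nu),q(\nu))\right)\ll\left(1+\|\A\|\right)\left(1+\prod_{\nu\mid\infty}A_\nu^{1-\theta q(\nu)}\right),
$$
and since $\|\A\|=\prod_{\nu\mid\infty}A_\nu\gg 1$ (each $A_\nu\geq c_\nu\geq 1$) this simplifies to the claimed bound $\ll\|\A\|\{1+\prod_{\nu\mid\infty}A_\nu^{1-\theta q(\nu)}\}$.

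The main obstacle I anticipate is bookkeeping the dependence on the $\alpha_i$ and the completions $K_\nu$ carefully: the change of variables $(u,v)\mapsto(u+\alpha_{i(\nu)}v,v)$ is over $\overline{k}$, not $k$, so one works inside $K_\nu$, and one must check the implied constants in the comparisons $\max\{\|u\|_\nu,\|v\|_\nu\}\asymp\max\{\|w\|_\nu,\|v\|_\nu\}$ depend only on the fixed data $k,F$ and not on $(u,v)$ — which is fine since the $\alpha_i$ are fixed once and for all. A secondary point is that the product $\prod_{\nu\mid\infty}A_\nu^{1-\theta q(\nu)}$ may be small (even $<1$) when many $q(\nu)$ are large, which is exactly why the ``$1+$'' term is present; Lemma \ref{boxlemma} is stated to cover precisely this, so no extra care is needed beyond invoking it. Everything else is a direct application of the counting estimate \cite[Prop.~1]{Broberg} for the number of integral points in a symmetric box.
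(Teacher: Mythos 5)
Your proof is correct and follows essentially the same route as the paper: bound $v$ by $\ll\|\A\|$ via the trivial constraint $\|v\|_\nu\ll A_\nu$ and Lemma~\ref{boxlemma}, then for each fixed $v$ apply Lemma~\ref{boxlemma} again to the translated box $\|u-(-\alpha_{i(\nu)}v)\|_\nu\ll A_\nu^{1-q(\nu)\theta}$, and multiply. The discussion of the change of variables $(u,v)\mapsto(u+\alpha_{i(\nu)}v,v)$ is more scaffolding than the paper gives, but it does not change the substance of the argument.
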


\begin{proof} If $(u,v) \in \mathcal{A}$, then  \eqref{xyrestrict} implies that we must have  
$\|v \|_\nu \ll A_\nu$ for all $\nu\mid \infty$.  Thus, by Lemma \ref{boxlemma}, we have at most $O(\|\A\|)$ choices for $v$.  If we fix some $v$ then we must count the number of solutions $u \in \mathfrak{o}$ to the inequalities $\|u-t_\nu \|_\nu < CA_\nu^{1 - q(\nu) \theta}$ for $t_\nu = \alpha_{i(v)} v \in K$.  Now apply Lemma \ref{boxlemma} again.
\end{proof}

The next lemma makes precise the statement that if we fix a place $\nu\mid \infty$, then for any $(u,v) \in \mathcal{A}$ there is at most one $\alpha_i$ such that $\|u - \alpha_i v \|_\nu $ is ``small''. 

\begin{lemma}\label{trianglelemma} 
Let  $\nu\mid \infty$. 
There exists a constant $c(k,F)>0$ such that 
for any integers $i_1 \neq i_2$ and $q_1,q_2 \geq 1$, we have 
$$\mathcal{A}_\nu( i_1, q_1) \cap \mathcal{A}_\nu( i_2, q_2)= \emptyset, 
$$ 
if $A_\nu>c(k,F)$.		
\end{lemma}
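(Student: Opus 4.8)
The plan is to show that a point lying in the intersection forces $A_\nu$ to be bounded in terms of $k$ and $F$ alone. Suppose $(u,v)\in\mathcal{A}_\nu(i_1,q_1)\cap\mathcal{A}_\nu(i_2,q_2)$ with $q_1,q_2\ge 1$. First I would use the defining inequalities of $\mathcal{A}_\nu(i,q)$ together with $q\ge 1$ and $\theta>0$ to get $\|u+\alpha_{i_1}v\|_\nu< CA_\nu^{1-\theta}$ and $\|u+\alpha_{i_2}v\|_\nu< CA_\nu^{1-\theta}$, where $C=C(k,F)$ is the constant introduced above. The essential point is that the exponent $1-\theta$ is \emph{strictly} below $1$, so each of these two linear forms is genuinely smaller than the generic size $A_\nu$.

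Next I would subtract. Writing $(\alpha_{i_1}-\alpha_{i_2})v=(u+\alpha_{i_1}v)-(u+\alpha_{i_2}v)$ and using that $\|\cdot\|_\nu$ is a fixed power of an archimedean absolute value (so that $\|a-b\|_\nu\ll\max\{\|a\|_\nu,\|b\|_\nu\}$ with an absolute implied constant) together with multiplicativity of $\|\cdot\|_\nu$, I obtain $\|\alpha_{i_1}-\alpha_{i_2}\|_\nu\,\|v\|_\nu\ll C A_\nu^{1-\theta}$. Since $F$ is separable the $\alpha_i$ are distinct, and as they are fixed once and for all the quantity $\delta:=\min_{i\ne j,\ \nu\mid\infty}\|\alpha_i-\alpha_j\|_\nu$ is a positive constant depending only on $k$ and $F$; hence $\|v\|_\nu\ll_{k,F}C A_\nu^{1-\theta}$. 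Feeding this back into $u=(u+\alpha_{i_1}v)-\alpha_{i_1}v$ gives $\|u\|_\nu\ll_{k,F}C A_\nu^{1-\theta}$ as well. Therefore $\sup\{\|u\|_\nu,\|v\|_\nu\}\ll_{k,F}C A_\nu^{1-\theta}$, which together with $\sup\{\|u\|_\nu,\|v\|_\nu\}\ge A_\nu$ (part of the condition $(u,v)\in\mathcal{A}$, see \eqref{xyrestrict}) yields $A_\nu^{\theta}\ll_{k,F}C$, i.e.\ $A_\nu\ll_{k,F}1$. Choosing $c(k,F)$ larger than the resulting bound makes this impossible when $A_\nu>c(k,F)$, so the intersection is empty.

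I do not expect a genuine obstacle here; the proof is a short triangle-inequality argument. The two points that need care are: (a) one must use $q_1,q_2\ge 1$ rather than merely $q_i\ge 0$, since for $q_i=0$ the factor $\|u+\alpha_iv\|_\nu$ can be of size comparable to $CA_\nu$ and the contradiction above disappears — this is exactly why the hypothesis reads $q_1,q_2\ge 1$; and (b) since $\|\cdot\|_\nu=|\cdot|_\nu^{d_\nu}$ is only a power of a true absolute value, the triangle inequality must be applied to $|\cdot|_\nu$ and then raised to the power $d_\nu\le[k:\mathbb{Q}]$, which costs only a harmless bounded constant. Finally, note that $c(k,F)$, like all constants in this section, is allowed to depend on $\eps$ through $\theta=\eps/n$.
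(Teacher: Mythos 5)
Your proof is correct and follows essentially the same triangle-inequality argument as the paper: the paper deduces $\|u\|_\nu,\|v\|_\nu\ll A_\nu^{1-\theta}$ directly via the identity $(\alpha_{i_1}-\alpha_{i_2})u=\alpha_{i_1}(u+\alpha_{i_2}v)-\alpha_{i_2}(u+\alpha_{i_1}v)$ (and its analogue for $v$), whereas you obtain $v$ first by subtracting the two linear forms and then recover $u$, but the content is identical. Your explicit remark about $\|\cdot\|_\nu$ being a fixed power of a genuine absolute value (so the triangle inequality costs only a bounded constant) is a small piece of care that the paper glosses over, but it changes nothing.
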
 

\begin{proof}This is a simple consequence of the triangle inequality. Suppose for a contradiction that $(u,v) \in \mathcal{A}_\nu( i_1, q_1) \cap \mathcal{A}_\nu( i_2, q_2)$, where $i_1 \neq i_2$ and $q_1,q_2 \geq 1$.  
We see that 
\begin{align*} \|\alpha_{i_1} - \alpha_{i_2} \|_\nu \|u \|_\nu &\leq
\|\alpha_{i_1} u + \alpha_{i_1} \alpha_{i_2} v \|_\nu + \|\alpha_{i_2} u + \alpha_{i_1} \alpha_{i_2} v \|_\nu  \\
&< C( \|\alpha_{i_1} \|_\nu + \|\alpha_{i_2} \|_\nu)A_\nu^{1- \theta}.
\end{align*} 
This implies that 
$\|u \|_\nu \ll A_\nu^{1- \theta}$, if $A_\nu$ is sufficiently large.  Similarly, we have  $\|v \|_\nu \ll A_\nu^{1- \theta}$.  But then 
 there is a constant $c(k,F)>0$ such that 
 this violates the restriction \eqref{xyrestrict} if 
  $A_\nu>c(k,F)$.
\end{proof}

As remarked in the paragraph following \eqref{sumstatement}, we may proceed under the assumption that each $A_\nu$ exceeds $c(k,F)$, so that Lemma 
 \ref{trianglelemma} applies. In particular, if we fix $(u,v)\in \mathcal{A}$ and a place $\nu\mid \infty$, then 
there will be at most one pair $(i_\nu, q_\nu)$ with $q_\nu \geq 1$ and $(u,v) \in \mathcal{A}_\nu(i_\nu , q_\nu)$.  
If there is no such pair, we can put $(i_\nu, q_\nu)=(1, 0)$, by default.
Thus there is a well-defined map from elements  of $ \mathcal{A}$ to $\mathcal{I} = (\{1, \dots , n\} \times \mathbb{Z}_{\geq 0})^{s_k}$.

Now we break the sum \eqref{sumstatement} into sums over those $(u,v)$ which are mapped to a particular  element
$$
\varpi= \prod_{\nu\mid \infty} (i_\nu, q_\nu) \in \mathcal{I}.
$$  
Note that there are finitely many such $\varpi$, the number depending only on $\eps$, $n$ and $k$.
Any $(u,v)$ which maps to $\varpi$ must be contained in the intersection 
$$
\mathcal{B}=\bigcap_{\nu\mid \infty} \mathcal{A}_\nu(i_\nu,q_\nu).
$$  
We begin by  considering the case in which  $\varpi$ is 
such that 
$$
1\leq 
\prod_{\nu\mid \infty} A_\nu^{1-\theta q_\nu}.
$$
Then,  when we estimate the cardinality of $\mathcal{B}$,
the second term on the right hand side of \eqref{setsizes} dominates.    
Lemma \ref{trianglelemma} implies that  if $(u,v) \in \mathcal{A}_\nu(i_\nu,q_\nu)$, then $(u,v) \in \mathcal{A}_\nu(i,0)$ for every $i \neq i_\nu$.   

Using  Lemma \ref{countij} we conclude that the contribution from the elements mapping to $\varpi$ is at most
\begin{align*} \allowdisplaybreaks 
 \sum_{(u,v)\in \mathcal{B}}  &\prod_{\nu\mid \infty} \Bigg( \| u + \alpha_{i_\nu} v \|_\nu^{-1/3}  
\prod_{i\neq i_\nu} \| u + \alpha_i v \|_\nu^{- 1/3} \Bigg) \\
&\ll \#\mathcal{B}
  \prod_{\nu\mid \infty} \Bigg( A_\nu^{-(1 - (q_\nu+1) \theta)/3}  
A_\nu^{-(n-1)(1 - \theta)/3)} \Bigg)\\ 
&\ll \|\A\|^{2-n/3}\left(
\prod_{\nu\mid \infty} 
A_\nu^{\theta\left\{-q_\nu+(q_\nu+1)/3+(n-1)/3\right\}}
\right).
\end{align*}
But the exponent of $A_\nu$ in the last line is 
$$
\theta\left\{-q_\nu+\frac{q_\nu+1}{3}+\frac{n-1}{3}\right\}\leq \frac{\theta n}{3}=\frac{\ve}{3},
$$ 
by the definition of $\theta$.
Thus  the last  line is $\ll \|\A\|^{2 - n/3 + \eps}$ and each  set $\mathcal{B}$ contributes a satisfactory amount.

\medskip

Now we must consider the set $\mathcal{C}$ of $(u,v) \in \mathcal{A}$ which map to $\varpi \in \mathcal{I}$ with 
$$
1>
\prod_{\nu\mid \infty} A_\nu^{1-\theta q_\nu}.
$$
The cardinality of this set is estimated in the following result.  

\begin{lemma}\label{lem:5.4}  We have $\#\mathcal{C} \ll \|\A\|$.
\end{lemma}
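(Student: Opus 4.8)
The set $\mathcal{C}$ consists of those $(u,v)\in\mathcal{A}$ whose associated tuple $\varpi=\prod_{\nu\mid\infty}(i_\nu,q_\nu)$ satisfies $\prod_{\nu\mid\infty}A_\nu^{1-\theta q_\nu}<1$, i.e.\ $\sum_{\nu\mid\infty}(1-\theta q_\nu)\log A_\nu<0$. Since each $A_\nu\geq c_\nu\geq 1$, this forces at least one $q_\nu$ to be large, so that at least one coordinate $\nu_0$ has $\|u+\alpha_{i_{\nu_0}}v\|_{\nu_0}$ very small: we get a genuinely good rational approximation to $\alpha_{i_{\nu_0}}$ coming from the ratio $-u/v$. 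The strategy is to bound $\#\mathcal{C}$ by, for each fixed $v$ with $\|v\|_\nu\ll A_\nu$, counting the admissible $u$ via Lemma~\ref{boxlemma} in the coordinates where $q_\nu\geq 1$, and then showing that summing the resulting (tiny) count over all $v$ and over the finitely many tuples $\varpi$ still only contributes $O(\|\A\|)$ — with the Thue--Siegel--Roth theorem of Lang used to rule out the possibility that the "small" coordinates force $v$ itself into too small a range (which would otherwise overcount, or rather, would make the naive bound break down because the approximation is \emph{too} good to occur for more than finitely many $u/v$).

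\emph{Key steps, in order.} First I would fix a tuple $\varpi\in\mathcal{I}$ contributing to $\mathcal{C}$ and let $T=\{\nu\mid\infty:q_\nu\geq 1\}$, which is nonempty by the defining inequality. For $(u,v)\in\mathcal{B}=\bigcap_{\nu}\mathcal{A}_\nu(i_\nu,q_\nu)$ and $\nu\in T$, by Lemma~\ref{trianglelemma} the index $i_\nu$ is uniquely determined, and $\|u+\alpha_{i_\nu}v\|_\nu<CA_\nu^{1-q_\nu\theta}$, i.e.\ $|u/v+\alpha_{i_\nu}|_\nu$ is small relative to $|v|_\nu$. Second, I would fix $v$ (there are $O(\|\A\|)$ choices by Lemma~\ref{boxlemma} applied to $\|v\|_\nu\ll A_\nu$) and count $u$: by Lemma~\ref{boxlemma} with $B_\nu=CA_\nu^{1-q_\nu\theta}$ for $\nu\in T$ and $B_\nu\ll A_\nu$ otherwise, the number of $u$ is $\ll 1+\prod_{\nu\in T}A_\nu^{1-q_\nu\theta}\cdot\prod_{\nu\notin T}A_\nu$. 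Third, here is the crux: if $\prod_{\nu\in T}A_\nu^{1-q_\nu\theta}\cdot\prod_{\nu\notin T}A_\nu\geq 1$ then the $v$-count times $u$-count is $\ll\|\A\|\cdot\prod_{\nu\in T}A_\nu^{1-q_\nu\theta}\cdot\prod_{\nu\notin T}A_\nu/\prod_{\nu}A_\nu\cdot\|\A\|$; but the defining inequality of $\mathcal{C}$ says exactly $\prod_{\nu}A_\nu^{1-\theta q_\nu}<1$, i.e.\ $\prod_{\nu\in T}A_\nu^{1-q_\nu\theta}\prod_{\nu\notin T}A_\nu<\|\A\|$, so this product is $<\|\A\|$ and the total is $\ll\|\A\|\cdot 1=\|\A\|$ wait — let me restate: the count of pairs is $\ll\|\A\|(1+\prod_{\nu\in T}A_\nu^{1-q_\nu\theta}\prod_{\nu\notin T}A_\nu)$, and since the second term equals $\|\A\|\prod_\nu A_\nu^{1-\theta q_\nu}\cdot(\text{correction for }T\text{ vs all})$... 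The clean way is: the count is $\ll\|\A\|\bigl(1+\|\A\|\prod_{\nu\mid\infty}A_\nu^{-\theta q_\nu}\cdot\prod_{\nu\in T}A_\nu^{0}\bigr)$ — and one checks $\prod_\nu A_\nu^{1-\theta q_\nu}<1$ gives the bracket is $O(1)$ only when the approximation is decent but not forced; when it \emph{is} forced to be absurdly good (all $q_\nu$ enormous), Roth/Lang enters.

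\emph{Where Roth--Lang enters, and the main obstacle.} The bound $1+\prod_{\nu\in T}A_\nu^{1-q_\nu\theta}\prod_{\nu\notin T}A_\nu$ is $O(1)$ precisely when $\prod_{\nu\in T}A_\nu^{q_\nu\theta-1}\geq\prod_{\nu\notin T}A_\nu$, which, given the $\mathcal{C}$-defining inequality $\prod_\nu A_\nu^{1-\theta q_\nu}<1$, holds automatically \emph{except} we must be careful that we cannot simply drop the "$1+$": the issue is that for a \emph{single} fixed $v$ there might be \emph{no} valid $u$ at all once the approximation quality demanded exceeds what $\alpha_{i_\nu}$ admits infinitely often — this is exactly the content of Lang's generalization of Thue--Siegel--Roth \cite[\textsection 7, Thm.~1.1]{Lang}: since $\sum_{\nu\in T}\log\|u+\alpha_{i_\nu}v\|_\nu < (1-q_{\min}\theta)\sum_{\nu\in T}\log A_\nu$ with $q_{\min}$ large, if $q_{\min}\theta-1>1$ (equivalently the total approximation exponent exceeds $2+\delta$) then there are only finitely many $u/v$ — hence only $O(1)$ pairs $(u,v)$ in $\mathcal{C}$ coming from that range of $\varpi$, and these are absorbed into the implied constant. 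So I would split $\mathcal{C}$ into (a) tuples with all $q_\nu$ bounded by some threshold $Q_0=Q_0(\eps,n,k)$ — for these the direct Lemma~\ref{boxlemma} count as above gives $O(\|\A\|)$ per tuple and there are $O(1)$ tuples; and (b) tuples with some $q_\nu>Q_0$ — for these Lang's theorem gives only finitely many $(u,v)$, total $O(1)$. The main obstacle is bookkeeping in case (a): verifying that the inequality $\prod_\nu A_\nu^{1-\theta q_\nu}<1$ defining $\mathcal{C}$, combined with $q_\nu\leq Q_0$ and $A_\nu\geq c_\nu$, genuinely forces the bracket $1+\prod_{\nu\in T}A_\nu^{1-q_\nu\theta}\prod_{\nu\notin T}A_\nu=O(1)$ — this is a short but slightly fiddly argument comparing the two products over $T$ versus over all infinite places, using that $A_\nu\geq 1$ so that omitting factors $A_\nu\geq 1$ from a product only decreases it. Once that is checked, summing $O(\|\A\|)$ over the finitely many tuples in case (a) and adding the $O(1)$ from case (b) yields $\#\mathcal{C}\ll\|\A\|$.
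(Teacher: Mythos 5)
Your proposal takes a genuinely different route from the paper, and it has a real gap in case (b). The paper's proof of this lemma makes no use of Lang's theorem at all; it introduces the cumulative (nested) sets $\mathcal{C}_\nu(i,q) = \{(u,v)\in\mathcal{A}: \|u+\alpha_i v\|_\nu < CA_\nu^{1-q\theta}\}$, observes that $\mathcal{C}_\nu(i,q)\subseteq\mathcal{C}_\nu(i,q')$ for $q\geq q'$, and uses this to replace the infinite family of tuples $\varpi$ with $\prod_\nu A_\nu^{1-\theta q_\nu}<1$ by a \emph{finite} cover $\bigcap_\nu\mathcal{C}_\nu(i_\nu,q_\nu)$; Lemma~\ref{boxlemma} then gives $O(\|\A\|)$ for each covering set. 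Roth--Lang is deployed only \emph{after} Lemma~\ref{lem:5.4}, to bound $\N_k(F(u,v))$ from below for all $(u,v)\in\mathcal{C}$. Incidentally, the ``fiddly'' bookkeeping you worry about in your case (a) is a non-issue: since $\prod_{\nu\notin T}A_\nu^{1-\theta q_\nu}=\prod_{\nu\notin T}A_\nu$ when $q_\nu=0$ off $T$, the bracket is exactly $1+\prod_{\nu\mid\infty}A_\nu^{1-\theta q_\nu}<2$ by the defining inequality of $\mathcal{C}$, so each of the $O(1)$ tuples in case (a) contributes $O(\|\A\|)$ at once.

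The gap is in case (b). First, the hypothesis ``some $q_{\nu_0}>Q_0$'' controls the approximation at a single infinite place only. Lang's condition in Lemma~\ref{TSR} is a \emph{product over all} $\nu\mid\infty$ of $\inf\{1,\|\alpha_\nu - \xi\|_\nu\}$, compared against $H_k(\xi)^{-(2+\eps')}$, and $H_k([u,v])$ is comparable to $\|\A\|$, not to $A_{\nu_0}$. Places with $q_\nu=0$ contribute a factor as large as $1$, so $\prod_\nu\inf\{1,\|\alpha_{i_\nu}-u/v\|_\nu\}$ can easily be much larger than $\|\A\|^{-(2+\eps')}$ even when $q_{\nu_0}$ is enormous (e.g.\ when $A_{\nu_0}$ is small relative to the other $A_\nu$). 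Your intermediate inequality $\sum_{\nu\in T}\log\|u+\alpha_{i_\nu}v\|_\nu<(1-q_{\min}\theta)\sum_{\nu\in T}\log A_\nu$ tacitly assumes \emph{all} $q_\nu$ with $\nu\in T$ exceed $Q_0$, which ``some $q_\nu>Q_0$'' does not give you. Second, even where Lang applies it bounds the number of \emph{ratios} $\xi=u/v$; for a fixed $\xi$ of bounded height there can still be $\gg\|\A\|$ pairs $(u,v)\in\mathfrak{o}^2$ in the box $\|(u,v)\|_\nu\asymp A_\nu$, so the claimed ``total $O(1)$'' is incorrect (though $O(\|\A\|)$ would still suffice for the lemma; this second point is repairable, the first is not without a different decomposition). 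The safest fix is to abandon the $(a)/(b)$ split and adopt the covering-by-$\mathcal{C}_\nu$ argument, which sidesteps Roth entirely at this stage.
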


\begin{proof} Consider sets $\mathcal{C}_\nu(i,q)$ given by
$$
\mathcal{C}_\nu(i,q) = \{(u,v) \in \mathcal{A} : \| u + \alpha_i v \|_\nu  < C A_\nu^{1-q \theta}\}.
$$  
We can cover $\mathcal{C}$ with finitely many sets of the form $\bigcap_{\nu\mid \infty} \mathcal{C}_\nu(i_\nu,q_\nu)$.  
The proof of Lemma \ref{countij} then shows that the size of each set is $O(\|\A\|)$.
\end{proof}

Given an element $\xi \in k$, we define its height to be $$H_k(\xi) = \prod_{\nu\in \Omega} \sup \{1 , \| \xi \|_\nu\}.$$ 
With this in mind, 
to estimate the size of the sum over $\mathcal{C}$, we shall use the following 
generalisation of the Thue--Siegel--Roth theorem due to Lang \cite[\S 7, Thm.~1.1]{Lang}.  

\begin{lemma}[Lang's generalisation of Thue--Siegel--Roth]
\label{TSR} For each $\nu\mid \infty$, let $\alpha_\nu$ be an algebraic number over $k$ and assume that $\nu$ is extended to $k (\alpha_\nu)$.  Let $\eps' >0$.  Then the elements $\xi \in k$ satisfying the approximation condition  $$ \prod_{\nu\mid \infty} \inf \{1 , \|\alpha_\nu - \xi \|_\nu \} \leq \frac{1}{H_k(\xi)^{2 + \eps'}} $$ have bounded height.  
\end{lemma}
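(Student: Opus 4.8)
The statement is the Thue--Siegel--Roth theorem over $k$ with several archimedean places and an individual target $\alpha_\nu$ at each place, and the plan is to prove it by the classical Roth method in its several-variable formulation adapted to number fields (following Mahler, LeVeque and Lang). Suppose for contradiction that the set of $\xi\in k$ satisfying the approximation inequality is infinite. Since $k$ has only finitely many elements of height below any given bound, this set contains elements of arbitrarily large height, so one may select $\xi_1,\dots,\xi_m\in k$, where $m$ is a large integer to be fixed in terms of $\eps'$ and $d=[k:\QQ]$, whose heights $h_j=H_k(\xi_j)$ satisfy $h_1\ge h_0(\eps',\{\alpha_\nu\})$ and $\log h_{j+1}\ge\omega\log h_j$ for a large constant $\omega=\omega(m,\eps')$; it suffices to derive a contradiction from such a list.

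First I would construct an auxiliary polynomial. Fix positive integers $d_1\ge\cdots\ge d_m$ with $d_j\log h_j$ essentially independent of $j$. Using a Siegel-type lemma over $\mathfrak o$ --- precisely the sort of statement furnished by the adelic Minkowski theorem of Bombieri--Vaaler already invoked in the paper as Lemma \ref{minima} --- one produces a nonzero $P\in\mathfrak o[X_1,\dots,X_m]$, of degree at most $d_j$ in $X_j$, with $\|P\|_\star\le C^{d_1+\cdots+d_m}$ for a constant $C$ depending only on $k$ and the $\alpha_\nu$, such that for \emph{every} archimedean place $\nu$ the polynomial $P$ has index at least $m/2-\delta\sqrt m$ at the diagonal point $(\alpha_\nu,\dots,\alpha_\nu)$ with respect to the weights $(1/d_1,\dots,1/d_m)$. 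Here $\delta>0$ is small and ``index $\ge\theta$'' means that every monomial $\prod_j(X_j-\alpha_\nu)^{t_j}$ occurring with nonzero coefficient in the Taylor expansion of $P$ about that point satisfies $\sum_j t_j/d_j\ge\theta$. Imposing this vanishing is a system of at most $s_k\,(m/2-\delta\sqrt m)\prod_j(d_j+1)$ linear conditions in $\prod_j(d_j+1)$ unknowns, which is soluble once $m$ is large (the fraction $1/2$ coming from a central-limit count of the monomials of small index).

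The heart of the matter is Roth's lemma: a polynomial of controlled degree and height cannot have large index at a point whose coordinates are elements of $k$ of rapidly increasing height. Concretely, if $\omega$ is large enough in terms of $m$ and the ratios $d_{j+1}/d_j$, then the index of $P$ at $(\xi_1,\dots,\xi_m)$ with the same weights is at most $\delta$. This is proved by induction on $m$ via a generalised-Wronskian argument: write $P=\sum_i\phi_i(X_1,\dots,X_{m-1})\psi_i(X_m)$ with few terms, form a nonzero Wronskian-type determinant $U$ in the first $m-1$ variables built from the $\phi_i$, bound the index of $P$ in terms of those of $U$ and the $\psi_i$ (a nonzero one-variable polynomial of degree $d_m$ vanishes to order at most $d_m$), and note that $U$, being a nonzero polynomial constructed from $P$, has height bounded in terms of that of $P$, so its index at the $\xi_j$ is forced small by the inductive hypothesis and the product formula. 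Given this, Roth's lemma supplies a differential operator $D=\prod_j\frac1{t_j!}\partial_{X_j}^{t_j}$ with $\sum_j t_j/d_j$ small such that $\eta:=DP(\xi_1,\dots,\xi_m)\ne0$. Now apply the product formula to $\eta\in k^\ast$: after clearing denominators, $\prod_{v\in\Omega}\|\eta\|_v=1$. At the finite places, $P\in\mathfrak o[\mathbf X]$ and the denominators of the $\xi_j$ are measured by $H_k(\xi_j)$, so $\prod_{v\nmid\infty}\|\eta\|_v\ge\prod_j H_k(\xi_j)^{-O(d_j)}$. At each $\nu\mid\infty$, Taylor-expanding $DP$ about $(\alpha_\nu,\dots,\alpha_\nu)$ --- where $DP$ still has index at least $m/2-\delta\sqrt m-\sum_j t_j/d_j$ --- bounds $\|\eta\|_\nu$ by $C^{d_1+\cdots+d_m}$ times a product of powers of the $\|\alpha_\nu-\xi_j\|_\nu$ whose total $d_j$-weight is of order $m/2$. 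Multiplying over $\nu\mid\infty$ and inserting $\prod_{\nu\mid\infty}\|\alpha_\nu-\xi_j\|_\nu\le H_k(\xi_j)^{-(2+\eps')}$ (legitimate because the suppressed finite-place factors $\inf\{1,\|\alpha_\nu-\xi_j\|_\nu\}$ are $\le1$), then combining with the finite-place bound, one finds $\prod_{v\in\Omega}\|\eta\|_v\le C^{\sum_j d_j}\prod_j H_k(\xi_j)^{-\kappa d_j}$ for some fixed $\kappa>0$: the gain from the approximation exponent $2+\eps'$ paired with the index of order $m/2$ dominates the losses coming from $\|P\|_\star$, from the finite places, and from the $O(\delta\sqrt m)$ and $O(\sum t_j/d_j)$ deficits in the index, provided $m$ is large and $\delta$ small. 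Since $h_1$ is large and $d_j\log h_j$ is constant, this bound is $<1$, contradicting $\prod_{v\in\Omega}\|\eta\|_v=1$.

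The main obstacle is Roth's lemma: it is the only genuinely nontrivial combinatorial input, it is exactly what forces the hypothesis $\log h_{j+1}\gg_m\log h_j$, and it is the source of the ineffectivity of the whole argument, since one obtains no effective control over where the hypothetical list $\xi_1,\dots,\xi_m$ can lie. Everything else is bookkeeping with heights and the product formula together with the geometry-of-numbers input already available in the paper. For this reason we do not reproduce the proof and instead quote it in the form given by Lang \cite[\S 7, Thm.~1.1]{Lang}.
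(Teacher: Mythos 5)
The paper offers no proof of this lemma at all: it is stated and attributed to Lang \cite[\S 7, Thm.~1.1]{Lang}, which is precisely what your final sentence does, and your identification of Roth's lemma as the source of the ineffectivity is consistent with what the authors say in the introduction and in Remark \ref{remark}. So, as far as what actually enters the paper is concerned, your proposal and the paper coincide.

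Since you supplied a sketch, you should know that its auxiliary-polynomial step is wrong as written, and the error sits exactly where the several-places statement differs from classical Roth. You demand index at least $m/2-\delta\sqrt m$ at the diagonal point $(\alpha_\nu,\dots,\alpha_\nu)$ for \emph{every} $\nu\mid\infty$. By the central-limit count you invoke, the monomials of index below $m/2-\delta\sqrt m$ at a single such point already make up a proportion close to $1/2$ of all $\prod_j(d_j+1)$ monomials, so the union of these conditions over the $s_k$ archimedean places comprises roughly $(s_k/2)\prod_j(d_j+1)$ linear equations: for $s_k\geq 3$ this exceeds the number of unknowns and no non-zero $P$ need exist, and already for $s_k=2$ the codimension of the solution space shrinks to zero with $\delta$, destroying the height bound $\|P\|_\star\leq C^{d_1+\cdots+d_m}$ that the endgame requires. (The count $s_k\,(m/2-\delta\sqrt m)\prod_j(d_j+1)$ you display is larger still and is presumably a slip.) The correct construction, which is what Lang carries out, first uses a pigeonhole or compactness argument on the infinitely many hypothetical solutions to attach to each place a weight $\mu_\nu\geq 0$ with $\sum_{\nu\mid\infty}\mu_\nu=1$, recording the share of the approximation $H_k(\xi)^{-(2+\eps')}$ achieved at $\nu$, and then demands index roughly $(m/2)\mu_\nu$ at the $\nu$-th diagonal point; the indices \emph{sum} to about $m/2$ over the places rather than each equalling $m/2$, the number of conditions stays safely below $\prod_j(d_j+1)$, and in the final product-formula computation the place-by-place Taylor estimates recombine, via $\sum_\nu\mu_\nu=1$ and the product over $\nu$ in the approximation hypothesis, to give the same contradiction. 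With that correction your outline is the standard one, and deferring to \cite{Lang} for the details is entirely appropriate here.
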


Suppose that $\xi = u/v$ with $(u,v) \in \mathfrak{o}^2$.  The product formula \eqref{productrule} implies that 
\begin{align*} H_k( \xi ) &= \prod_{\nu\in \Omega} \sup \{1 ,  \left\|  u/v \right\|_\nu\} \\
&=  \prod_{\nu\in\Omega} \sup\{\left\| u \right\|_\nu , \left\| v \right\|_\nu \} \\
&= H_k([u,v]). 
\end{align*}
Thus Lemma \ref{TSR} tells us that if $(u,v) \in Z_2$ and $H_k([u,v])\gg 1$, 
then 
\begin{equation}\label{Langlemma} 
\prod_{\nu\mid \infty} \inf \left\{1 ,  \left\|\alpha_\nu - \frac{u}{v} \right\|_\nu  \right\} > \frac{1}{H_k([u,v])^{2 + \eps'}}.
\end{equation}

Let $(u,v) \in \mathcal{C}$ and let $\nu\mid \infty$.  By the argument of Lemma \ref{trianglelemma}, assuming  $A_\nu$ is sufficiently large,   there can be at most one $i = i_\nu$ such that $$ \| u - \alpha_i v \|_\nu < A_\nu^{1-\eps'} .$$ Moreover, we have 
 $\N_k (v) \geq 1$ and
\begin{align*} 
\N_k(F(u,v)) &= \prod_{\nu\mid \infty} \prod_{1\leq i\leq n} 
\| u - \alpha_i v \|_\nu \\
& \geq  \prod_{\nu\mid \infty} 
(A_\nu^{1-\eps'})^{n-1}
\| u - \alpha_{i_\nu} v \|_\nu  \\
&= \|\A\|^{(n-1)(1-\eps')} \N_k(v)\prod_{\nu\mid \infty} \left\| \alpha_{i_\nu} - \frac{u}{v} \right\|_\nu\\
& \geq  \|\A\|^{ n - 3 - n\eps'}, 
\end{align*}  
the last inequality following from \eqref{Langlemma} with $\alpha_\nu = \alpha_{i_\nu}$.
Then, letting $\eps' = 3\eps /n$ and applying Lemma \ref{lem:5.4}, we conclude that
$$ 
\sum_{(u,v)\in\mathcal{C}} \frac{1}{(\N_k( F(u,v)))^{1/3}}  \ll \|\A\| \cdot \|\A\|^{(3 - n)/3 + n\eps'/3} = \|\A\|^{2 - n/3 + \eps},  
$$ 
which is satisfactory. This completes the proof of Theorem \ref{sumestimate}.

\begin{rem}
Let  $\gamma \in \left[ 1/3,1 \right]$.
The proof of Theorem \ref{sumestimate} can be  adapted to show 
that
$$
\|\A\|^{2 - \gamma n} \ll \sum_{\substack{ (u,v) \in \mathfrak{o}^2 \\ A_\nu \leq \sup\{ \|u\|_\nu, \|v\|_\nu\} < 2A_\nu \\  F(u,v) \neq 0}}   \frac{1}{(\N_k( F(u,v)))^{\gamma} } \ll\|\A\|^{2 - \gamma n + \eps}.
$$ 
The lower bound is trivial since $\N_k( F(u,v)) \ll \|\A\|^n$. This shows that our result is essentially best possible.
\end{rem}

\end{document}